\let\OLDthebibliography\thebibliography
\renewcommand\thebibliography[1]{
	\OLDthebibliography{#1}
	\setlength{\parskip}{3pt}
	\setlength{\itemsep}{2.5pt plus 0.5ex}
}
\renewcommand{\tilde}{\widetilde}
\newcommand{\KR}{\mathrm{KR}}
\newcommand{\TV}{\mathrm{TV}}
\newcommand{\BV}{\mathrm{BV}}
\newcommand{\BVKR}{\BV_{\KR}\bigl((0,1); \cM_+(\Td)\bigr)}
\newcommand{\cMT}{\cM_+\big((0,1)\times \Td \big)}
\newcommand{\WKR}{W_\KR^{1,1}((0,1); \cM_+(\Td))}
\newcommand{\Meps}{\R_+^{\cX_\eps}}
\newcommand{\Mdeps}{\R_a^{\cE_\eps}}
\newcommand{\MdT}{\cM^d(\T^d)}
\newcommand{\sa}{\mathsf{v}}
\newcommand{\sz}{\mathsf{z}}
\renewcommand{\hom}{\mathrm{hom}}
\DeclareMathOperator{\Eff}{\mathsf{Eff}}
\DeclareMathOperator{\Rep}{{\mathsf{Rep}}}
\newcommand{\cA}{\mathcal{A}}
\newcommand{\cE}{\mathcal{E}}
\newcommand{\cF}{\mathcal{F}}
\newcommand{\cI}{\mathcal{I}}
\newcommand{\cM}{\mathcal{M}}
\newcommand{\cW}{\mathcal{W}}
\newcommand{\X}{\mathcal{X}}
\newcommand{\cX}{\mathcal{X}}
\newcommand{\cC}{\mathcal {C}}
\newcommand{\cMA}{\mathcal{MA}}
\newcommand{\bMA}{\mathbb{MA}}
\newcommand{\V}{V}
\newcommand{\XQ}{\cX^Q}
\newcommand{\EQ}{\cE^Q}
\newcommand{\Prob}{\mathscr{P}}
\newcommand{\bA}{\mathbb{A}}
\newcommand{\T}{\mathbb{T}}
\newcommand{\Lip}{\mathrm{Lip}}
\newcommand{\Leb}{\mathscr{L}}
\newcommand{\cCE}{\mathcal{CE}}
\newcommand{\CE}{\mathsf{CE}}
\newcommand{\R}{\mathbb{R}}
\newcommand{\Z}{\mathbb{Z}}
\newcommand{\N}{\mathbb{N}}
\newcommand{\eps}{\varepsilon}
\newcommand{\conv}{\mathrm{conv}}
\newcommand{\tand}{\quad\text{ and }\quad}
\newcommand{\dd}{\, \mathrm{d}}
\newcommand{\abs}[1]{\lvert#1\rvert}
\newcommand{\norm}[1]{\| {#1}\|}
\newcommand{\suchthat}{\ensuremath{\ : \ }} % such that inside, for example, the sets definition
\renewcommand{\hat}{\widehat}
\DeclareMathOperator{\sgn}{sgn}
\theoremstyle{plain}
\newtheorem{theorem}{Theorem}[section]
\newtheorem{lemma}[theorem]{Lemma}
\newtheorem{proposition}[theorem]{Proposition}
\newtheorem{Gladbach-Kopfer-Maas-Portinale:2023ecture}[theorem]{Conjecture}
\newtheorem{definition}[theorem]{Definition}
\newtheorem{assumption}[theorem]{Assumption}
\theoremstyle{remark}
\newtheorem{remark}[theorem]{Remark}
\newtheorem*{claim*}{Claim}
\newtheorem*{remark*}{Remark}
\newtheorem*{example*}{Example}
\newtheorem*{notation*}{Notation}
\numberwithin{equation}{section}
\definecolor{jan}{rgb}{0.0,0.2,0.5}
\definecolor{mat}{rgb}{0.0,0.5,0.3}
\def\bfm{{\pmb{m}}}
\def\bfJ{\pmb{J}}
\def\BS{\boldsymbol}
    \def\bfmu{{\BS\mu}}
\def\bfnu{{\BS\nu}}            \def\bfxi{{\BS\xi}}
\def\bfsigma{{\BS\sigma}}
\DeclareMathOperator{\dive}{\mathsf{div}}
\newcommand{\bW}{\mathbb{W}}
\newcommand{\Td}{\mathbb{T}^d}
\def\moverlay{\mathpalette\mov@rlay}
\def\mov@rlay#1#2{\leavevmode\vtop{%
		\baselineskip\z@skip \lineskiplimit-\maxdimen
		\ialign{\hfil$\m@th#1##$\hfil\cr#2\crcr}}}
\newcommand{\charfusion}[3][\mathord]{
	#1{\ifx#1\mathop\vphantom{#2}\fi
		\mathpalette\mov@rlay{#2\cr#3}
	}
	\ifx#1\mathop\expandafter\displaylimits\fi}
\renewcommand{\phi}{\varphi}
\DeclareMathOperator{\Dom}{\mathsf{D}}
\title[Discrete-to-continuum limits of optimal transport with linear growth]{Discrete-to-continuum limits of optimal transport with linear growth on periodic graphs}
\author{Lorenzo Portinale}
\address{Institut f\"ur angewandte Mathematik, Universit\"at Bonn, Endenicher Allee 60, 53115 Bonn, Germany}
\email{portinale@iam.uni-bonn.de}
\author{Filippo Quattrocchi}
\address{Institute of Science and Technology Austria (ISTA),
	Am Campus 1, 3400 Klosterneuburg, Austria}
\email{filippo.quattrocchi@ista.ac.at}
\subjclass[2020]{Primary: 49Q22; Secondary: 49M25, 49J45, 65K10, 74Q10}
\keywords{optimal transport, discrete-to-continuum, homogenisation, linear growth}
\let\oldtocsection=\tocsection
\let\oldtocsubsection=\tocsubsection
\let\oldtocsubsubsection=\tocsubsubsection
\renewcommand{\tocsection}[2]{\vspace{0.3ex}\hspace{0em}\oldtocsection{#1}{#2}}
\renewcommand{\tocsubsection}[2]{\vspace{0.3ex}\hspace{1em}\oldtocsubsection{#1}{#2}}
\renewcommand{\tocsubsubsection}[2]{\hspace{2em}\oldtocsubsubsection{#1}{#2}}
\begin{document}

	\begin{abstract}
		We prove discrete-to-continuum convergence for dynamical optimal transport on $\Z^d$-periodic graphs with energy density having linear growth at infinity. This result provides an answer to a problem left open by Gladbach, Kopfer, Maas, and Portinale (Calc Var Partial Differential Equations 62(5), 2023), where the convergence behaviour of discrete boundary-value dynamical transport problems is proved under the stronger assumption of superlinear growth. Our result  extends the known literature to some important classes of examples, such as scaling limits of $1$-Wasserstein transport problems. Similarly to what happens in the quadratic case, the geometry of the graph plays a crucial role in the structure of the limit cost function, as we discuss in the final part of this work, which includes some visual representations. 
	\end{abstract}

	\maketitle

	\setcounter{tocdepth}{1}
	%\tableofcontents
	\section{Introduction}
	In the euclidean setting, the Benamou--Brenier \cite{Benamou-Brenier:2000} formulation of the distance on the space $\displaystyle \Prob_2(\R^d)$ known as \textit{2-Wasserstein} or \textit{Kantorovich-Rubinstein} distance is given by the minimisation problem 
	\begin{equation} \label{eq:BB}
		\bW_2 (\mu_0, \mu_1)^2  =  \inf \left\{ \int_0^1 \int_{\R^d} \frac{|\nu_t|^2}{\mu_t} \dd x \dd t \suchthat \partial_t \mu_t + \nabla \cdot \nu_t = 0, \quad \mu_t:\mu_0 \to \mu_1 \right\} \, , 
	\end{equation}
	for every $\mu_0,\mu_1 \in \Prob_2(\R^d)$. The PDE constraint is called \textit{continuity equation} (we write $(\bfmu,\bfnu) \in \CE$ when~$(\bfmu,\bfnu)$ is a solution).
	Over the years, the Benamou--Brenier formula \eqref{eq:BB} has revealed significant connections between the theory of optimal transport and different fields of mathematics, including partial differential equations \cite{Jordan-Kinderlehrer-Otto:1998}, functional inequalities \cite{otto2000generalization}, and the novel notion of Lott--Villani--Sturm's synthetic Ricci curvature bounds for metric measure spaces \cite{
		lott2007weak,LottVillani09,Sturm04,Sturm04b}.
	Inspired by the dynamical formulation \eqref{eq:BB}, in independent works, Maas \cite{Maas:2011} (in the setting of Markov chains) and Mielke \cite{Mielke:2011} (in the context of reaction-diffusion systems) introduced a notion of optimal transport in discrete settings structured as a dynamical formulation \textit{\`{a} la} Benamou--Brenier as in \eqref{eq:BB}. 
	One of the features of this discretisation procedure is the replacement of the continuity equation with a discrete counterpart: when working on a (finite) graph $(\cX,\cE)$ (resp.~vertices and edges), the discrete continuity equation reads
	\begin{align*}
		\partial_t m_t(x) + \sum_{y\sim x}J_t(x,y) = 0 
		\, , \quad 
		\forall x \in \cX 
		\, , \quad 
		\big(\text{we write } (\bfm, \bfJ) \in \cCE_\cX \big) 
	\end{align*}
	where $(m_t,J_t)$ corresponds to discrete masses and fluxes (s.t.~$J_t(x,y) = -J_t(y,x)$). Maas' proposed distance $\cW$ \cite{Maas:2011} is obtained by minimising, under the above constraint, a discrete analog of the Benamou--Brenier energy functional with reference measure $\pi \in \Prob(\cX)$ and weight function $\omega \in \R^\cE$, of the form
	\begin{align*}
		\int_0^1 \frac{1}{2} \sum_{(x,y) \in \cE} \hspace{-1mm}\frac{|j_t(x,y)|^2}{\hat{r}_t(x,y)} \omega(x,y) \dd t 
		\, , 
		\quad \text{where} \quad 
		\hat{r}_t(x,y) := \theta_{\log}(r_t(x), r_t(y)) 
		\,  , \quad 
	r_t(x):= \frac{m_t(x)}{\pi(x)}
		\, , 
	\end{align*}
	and where $\theta_{\log}(a,b) := \int_0^1 a^s b^{1-s} \dd s$ denotes the $1$-homogeneous, positive mean called \textit{logarithmic mean}.
	With this particular choice of the mean, it was proved \cite{Maas:2011,Mielke:2011} (see also \cite{CHLZ11}) that the discrete heat flow coincides with the gradient flow of the relative entropy with respect to the discrete distance $\cW$.
	In discrete settings, the equivalence between static and dynamical optimal transport breaks down, and the latter stands out in applications to evolution equations, discrete Ricci curvature,  and functional inequalities \cite{Erbar-Maas:2012,Mielke:2013,Erbar-Maas:2014, Erbar-Maas-Tetali:2015, Fathi-Maas:2016,Erbar-Henderson-Menz:2017,Erbar-Fathi:2018}.
	Subsequently, several contributions have been devoted to the study of scaling behaviour of discrete transport problems, in the setting of discrete-to-continuum approximation problems. The first convergence results  were obtained in \cite{GiMa13} for symmetric grids on a $d$-dimensional torus, and by \cite{trillos2017gromov} in a stochastic setting. In both cases, the authors obtained convergence of the discrete distances towards $\bW_2$ in the limit of the discretisation getting finer and finer. 
	
	Nonetheless, it turned out that the geometry of the graph plays a crucial role in the game. 
	A general result was obtained in \cite{GlKoMa18}, where it is proved that the convergence of discrete distances associated with finite-volume partitions with vanishing size to the $2$-Wasserstein space is substantially equivalent to an \textit{asymptotic isotropy condition} on the mesh. 
	The first complete characterisation of limits of transport costs on periodic graphs in arbitrary dimension for general energy functionals (not necessarily quadratic) was established in \cite{gladbach2020,Gladbach-Kopfer-Maas-Portinale:2023}: in such setting, the limit energy functional (more precisely, the energy density) can be explicitly characterised in terms of a cell-formula, which is a finite-dimensional constrained minimisation problem depending on the initial graph and the cost function at the discrete level. The energy functionals considered in \cite{Gladbach-Kopfer-Maas-Portinale:2023} are of the form
	\begin{align}
		\label{eq:intro_functionals}
		(\bfmu, \bfnu) \in \CE 
		\quad \mapsto \quad 
		\bA(\bfmu,\bfnu):=
		\int_{(0,1) \times \Td} 
		f
		\big( 
		\rho, 
		j 
		\big) 
		\dd \Leb^{d+1}
		+
		\int_{ (0,1) \times \Td} 
		f^\infty 
		\big(
		\rho^\perp, j^\perp 
		\big)
		\dd \bfsigma \, , 
	\end{align}
	where we used the Lebesgue decomposition 
	\begin{align*}
		\bfmu = \rho \Leb^{d+1} + \bfmu^\perp, 
		\quad
		\bfnu = j \Leb^{d+1} + \bfnu^\perp \, ,
		\tand 
		\bfmu^\perp = \rho^\perp \bfsigma \, , 
		\quad
		\bfnu^\perp = j^\perp \bfsigma \, ,
		\quad 
		\big( \bfsigma \perp \Leb^{d+1} \big)
		%	 	\, , 
	\end{align*}
	and where $f: \R_+ \times \R^d \to \R \cup \{+\infty\}$ is some given convex, lower semicontinuous function with \textit{at least linear growth}, i.e.~satisfying
	\begin{align}
		\label{eq:intro_atleast_linear}
		f(\rho, j)
		\geq c |j| - C (\rho + 1)
		\, , \qquad 
		\forall \rho \in \R_+
		\tand 
		j \in \R^d
		\, ,
	\end{align} 
	whereas $f^\infty$ denotes its recession function (see \eqref{eq:def_recess} for the precise definition). The choice $f(\rho,j) := |j|^2/\rho$ corresponds to the $\bW_2$ distance. 
	At the discrete level, on a locally finite graph $(\cX, \cE)$ embedded in $\R^d$, the natural counterpart is represented by functionals of the form
	\begin{align}
		\label{eq:intro_functionals_discrete}
		(\bfm, \bfJ) \in \cCE_\cX
		\quad \mapsto \quad 
		\cA(\bfm,\bfJ):=
		\int_0^1 
		F(m,J) \dd t 
		\, , 
	\end{align} 
	for a given lower semicontinuous, convex, and local function $F$ which also has at least linear growth with respect to the second variable (see \eqref{eq: growth} for the precise definition). 
	
	The main result in \cite{Gladbach-Kopfer-Maas-Portinale:2023} is the $\Gamma$-convergence for constrained functionals as in \eqref{eq:intro_functionals_discrete}, after a suitable rescaling of the graph $\cX_\eps := \eps \cX$, $\cE_\eps := \eps \cE$, and of the energy $F_\eps$ (and associated $\cA_\eps$), in the framework  of $\Z^d$-periodic graphs. In particular, the limit energy is of the form \eqref{eq:intro_functionals}, where the energy density $f=f_\hom$ is given in terms of a \textit{cell-formula}, explicitly reading
	\begin{align}
		\label{eq:intro_cellprobd}
		f_{\hom}(\rho,j) := \inf \left\{ F(m,j) \suchthat (m,J) \in \Rep(\rho,j) \right\} \, , \qquad \rho \in \R_+ \, , \ \ j \in \R^d \, ,
	\end{align}
	where $\Rep(\rho,j)$ denotes the set of discrete \textit{representatives} of $\rho$ and $j$, given by all $\Z^d$-periodic functions $m: \cX \to \R_+$ and $\Z^d$-periodic anti-symmetric discrete vector fields $J: \cE \to \R$ satisfying
	\begin{align}	\label{eq:intro_effective}
		\sum_{x \in \cX \cap [0,1)^d} m(x) = \rho 	
		\, , \quad 
		\dive J = 0 
		\, , 
		\tand
		\Eff (J) :=  \frac12 
		\sum_{\substack{(x,y) \in \cE \\ x \in [0,1)^d} } J(x,y) (y-x) = j \, . 
	\end{align}
	The result covers several examples, both for what concerns the geometric properties of the graph (such as isotropic meshes of $\mathbb T^d$, or the simple nearest-neighbors interaction on the symmetric grid) as well as the choice of the cost functionals (including discretisation of $p$-Wasserstein distances in arbitrary dimension and flow-based models, i.e.~when $F$ -- or $f$ -- does not depend on the first variable).
	
	\begin{figure}
		\centering
		\includegraphics[width=.65\textwidth, trim={13mm 17mm 20mm 24mm},clip]{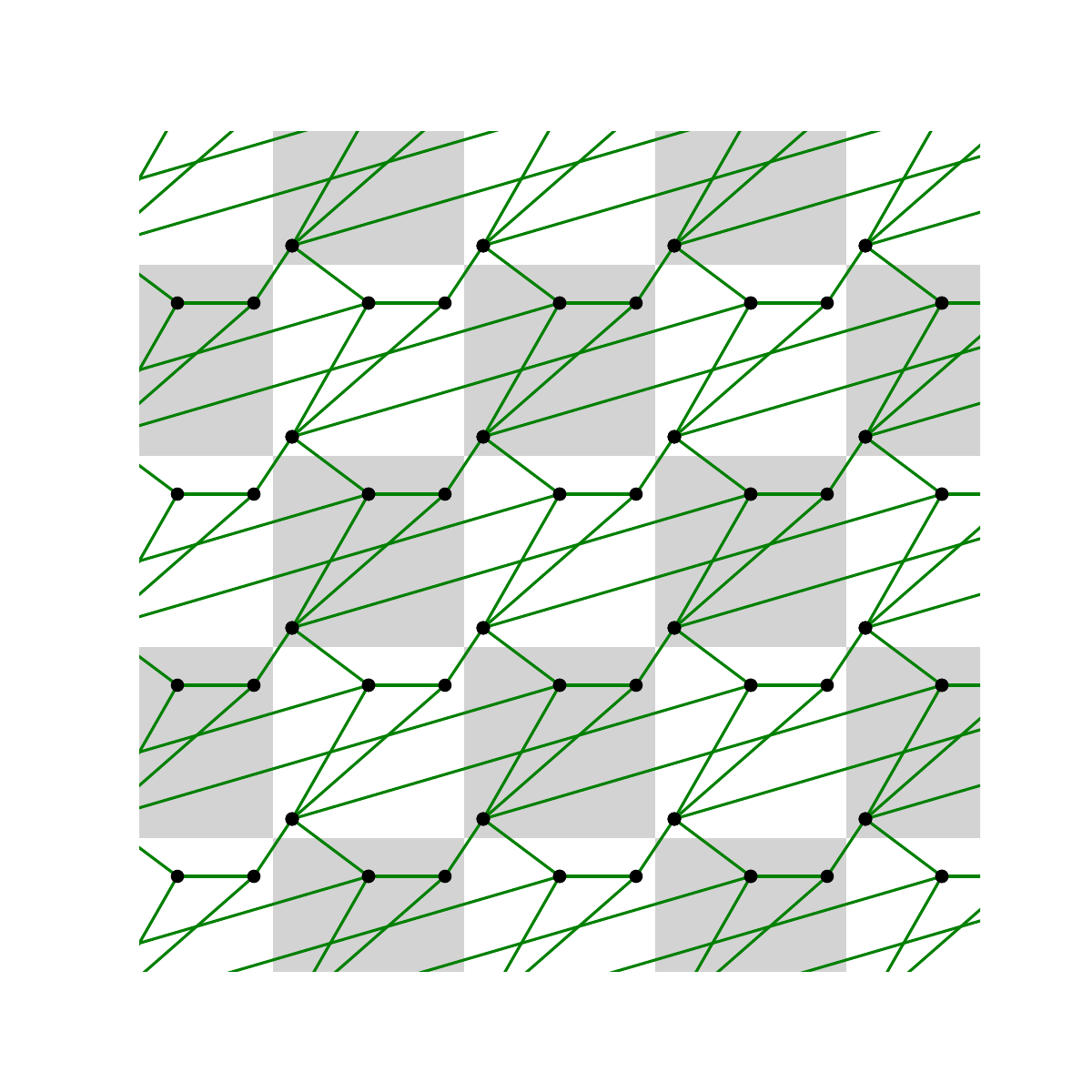}
		\caption{Example of $\Z^d$-periodic graph embedded in $\R^d$}
	\end{figure}
	
	As a consequence of this $\Gamma$-convergence result (in time-space) and a corresponding compactness result for solutions of the continuity equation with bounded energy \cite[Theorem~5.9]{Gladbach-Kopfer-Maas-Portinale:2023}, one obtains as a corollary \cite[Theorem~5.10]{Gladbach-Kopfer-Maas-Portinale:2023} that, under the stronger assumption of \textit{superlinear growth} on $F$, also the corresponding discrete boundary-value problems (i.e.~the associated squared distances, in the case of the quadratic Wasserstein problems) $\Gamma$-converge to the corresponding continuous one, namely $\cMA_\eps \stackrel{\Gamma}{\to} \bMA_{\hom}$ (with respect to the weak topology), where 
	\begin{align*}
		\cMA_\eps(m_0,m_1) 
		&:= 
		\inf 
		\left\{ 
		\cA_\eps(\bfm, \bfJ) 
		\suchthat
		(\bfm,\bfJ) \in \cCE_{\cX_\eps}
		\tand 
		\bfm_{t=0}=m_0
		, \, \, 
		\bfm_{t=1}=m_1
		\right\}
		\, , 
		\\
		\bMA_\hom(\mu_0,\mu_1)
		&:=
		\inf 
		\left\{ 
		\bA_\hom(\bfmu,\bfnu)
		\suchthat 
		(\bfmu, \bfnu) \in \CE	
		\tand 
		\bfmu_{t=0}=\mu_0
		, \, \, 
		\bfmu_{t=1}=\mu_1
		\right\}
		\, .
	\end{align*}
	The superlinear growth condition, at the continuous level, is a reinforcement of the condition \eqref{eq:intro_atleast_linear} and assumes the existence of a function $\theta:[0,\infty) \to [0,\infty)$ with $\lim_{t \to \infty} \frac{\theta(t)}{t} = \infty$ and a constant $C \in \R$ such that 
	\begin{align}
		f(\rho,j) \geq (\rho +1) \theta
		\left(
		\frac{\abs{j}}{\rho+1}
		\right)
		-
		C (\rho+1) 
		\, ,	\qquad 
		\forall \rho \in \R_+ \, , \ \ j \in \R^d 
		\, .
	\end{align} 
	In particular, this forces every finite energy $(\bfmu,\bfnu) \in \CE$ to satisfy $\bfnu \ll \bfmu + \Leb^{d+1}$ \cite[Remark~6.1]{Gladbach-Kopfer-Maas-Portinale:2023}, and it ensures compactness in	$\cC_{\KR} \bigl([0,1]; \cM_+(\T^d) \bigr)$ \cite[Theorem~5.9]{Gladbach-Kopfer-Maas-Portinale:2023}, i.e., with respect to the time-uniform convergence in the Kantorovich--Rubinstein norm (recall that the KR norm, on~$\T^d$, metrises weak convergence on $\cM_+(\T^d)$, see~\cite[Appendix A]{Gladbach-Kopfer-Maas-Portinale:2023}). This compactness property makes the proof of the convergence $\cMA_\eps \stackrel{\Gamma}{\to} \bMA_\hom$ an easy corollary of the convergence of the time-space energies. 
	
	Without the assumption of superlinear growth the situation is much more subtle: in particular, the lower semicontinuity of $\bMA$ obtained minimising the functional $\bA$ associated to a function $f$ satisfying only \eqref{eq:intro_atleast_linear} is not trivial. This is due to the fact that, in this framework, being a solution of $\CE$ with bounded energy only ensures bounds for $\bfmu \in \BVKR$, which does not suffice to pass to the limit in the constraint given by the boundary conditions: \emph{jumps} may occur at $t \in \{ 0,1\}$ in the limit. 
	Therefore, when the energy grows linearly (linear bounds from both below and above), the scaling behaviour of the discrete boundary-value problems $\cMA_\eps$, as well as the lower semicontinuity of $\bMA$, cannot be understood with the techniques  utilised in \cite{Gladbach-Kopfer-Maas-Portinale:2023}. The main goal of this work is, thus, to provide discrete-to-continuum results for $\cMA_\eps$ for discrete energies with linear growth, as well as for every flow-based type of energy, i.e.~$F(m,J) = F(J)$. With similar arguments, we can also show the lower semicontinuity of $\bMA$ for a general energy density $f$ under the same assumptions, see Section~\ref{sec:lsc}.
	
	\begin{theorem}[Main result]
	Assume that either $F$ satisfies the linear growth condition, i.e. 
	\begin{align*}
			F(m,J) \leq C
			\Bigg(
			1 +
			\sum_{
				\substack{
					(x,y) \in \cE \\ 
					|x_\sz|_{\infty} \leq R
				}
			}  
			|J(x,y)| 
			+ \sum_{
				\substack{
					x\in \cX\\ 
					|x_\sz|_{\infty} \leq R
			}} 
			m(x) 
			\Bigg) 
		\end{align*}
		for some constant $C<\infty$, or that $F$ does not depend on the $\rho$-variable (flow-based type). Then, as $\eps \to 0$, the discrete functionals $\cMA_\eps$ $\Gamma$-converge to the continuous functional $\bMA_\hom$ with respect to weak convergence.
	\end{theorem}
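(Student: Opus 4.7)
The plan is to reduce both directions of the $\Gamma$-convergence to the time-space result of \cite{Gladbach-Kopfer-Maas-Portinale:2023}, through the device of static extensions to an enlarged interval $(-\delta, 1+\delta)$. Under either assumption on $F$, setting $\tilde\bfm_\eps \equiv m_0^\eps$ and $\tilde\bfJ_\eps \equiv 0$ on $(-\delta, 0)$, and symmetrically on $(1, 1+\delta)$, produces a solution of the discrete continuity equation whose additional cost relative to the original is $O(\delta)$ uniformly in $\eps$: in the linear-growth case from $F(m, 0) \leq C(1 + \|m\|)$ together with the uniform mass bound inherited from $m_i^\eps \weakto \mu_i$, and in the flow-based case from $F(0) < \infty$.

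\noindent\textbf{Liminf inequality.} Let $m_i^\eps \weakto \mu_i$ and let $(\bfm_\eps, \bfJ_\eps)$ be an $o(1)$-optimizer for $\cMA_\eps(m_0^\eps, m_1^\eps)$. After the static extension, compactness \cite[Theorem 5.9]{Gladbach-Kopfer-Maas-Portinale:2023} combined with the time-space $\Gamma$-liminf produces a weak limit $(\tilde\bfmu, \tilde\bfnu)$ on $(-\delta, 1+\delta)$ satisfying $\liminf \cA_\eps^{\mathrm{ext}} \geq \bA_\hom^{(-\delta, 1+\delta)}(\tilde\bfmu, \tilde\bfnu)$, and coinciding with $\mu_0 \otimes \Leb^1$, respectively $\mu_1 \otimes \Leb^1$, on the extensions. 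I then reparametrize via the affine time change $s = (r+\delta)/(1+2\delta)$ to obtain $(\bar\bfmu, \bar\bfnu) \in \CE$ on $(0,1) \times \Td$ satisfying exactly $\bar\bfmu_{t=0} = \mu_0$ and $\bar\bfmu_{t=1} = \mu_1$; crucially, the static extensions become short static pieces inside $(0,1)$, so any jumps of $\tilde\bfmu$ at $t = 0, 1$ are now at interior points and do not conflict with the boundary trace conditions. In the linear-growth case, the global Lipschitz continuity of $f_\hom$ (from its linear upper bound), the $1$-homogeneity of $f_\hom^\infty$, and the lower bound $f_\hom(\rho, j) \geq c|j| - C(1 + \rho)$ (which controls $\int |j|$ by the energy) yield $\bA_\hom(\bar\bfmu, \bar\bfnu) \leq (1 + C\delta)\,\bA_\hom^{(-\delta, 1+\delta)}(\tilde\bfmu, \tilde\bfnu)$; in the flow-based case with superlinear growth, $f_\hom^\infty = +\infty$ off $\{j = 0\}$ rules out singular-in-time flux components, so $\tilde\bfmu$ is automatically KR-continuous and the restriction to $(0,1)$ is a direct competitor with $\bA_\hom^{(0,1)}(\tilde\bfmu|_{(0,1)}, \tilde\bfnu|_{(0,1)}) \leq \bA_\hom^{(-\delta, 1+\delta)}(\tilde\bfmu, \tilde\bfnu)$. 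Sending $\delta \to 0$ gives $\liminf \cMA_\eps(m_0^\eps, m_1^\eps) \geq \bMA_\hom(\mu_0, \mu_1)$.

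\noindent\textbf{Limsup and main obstacle.} The $\Gamma$-limsup is obtained by a mirror construction: take an $\eta$-optimal $(\bfmu, \bfnu)$ for $\bMA_\hom(\mu_0, \mu_1)$, extend it statically to $(-\delta, 1+\delta)$ at cost $O(\delta)$, apply the continuum-to-discrete recovery sequence from \cite{Gladbach-Kopfer-Maas-Portinale:2023}, and select $m_i^\eps$ as time-averaged traces of the resulting $\tilde\bfm_\eps$ on short intervals near $t = 0$ and $t = 1$ via a standard slicing argument guaranteeing $m_i^\eps \weakto \mu_i$ without extra energy. The main obstacle is the rescaling step in the liminf in the linear-growth case: one must show that the time reparametrization perturbs the total energy only by a multiplicative factor $1 + O(\delta)$, which requires a careful interplay between the Lipschitzianity of $f_\hom$ (from its linear upper bound), the linear lower bound, and the precise transformation of singular flux contributions under the pushforward via the $1$-homogeneity of $f_\hom^\infty$.
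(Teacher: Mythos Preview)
Your treatment of the linear-growth case is essentially the paper's argument with the order of operations swapped: the paper rescales the \emph{discrete} curves into $(\delta,1-\delta)$ and then applies the time-space $\Gamma$-liminf on $(0,1)$, whereas you apply the $\Gamma$-liminf on the enlarged interval first and rescale the \emph{continuous} limit afterwards. Both work, and both hinge on the same Lipschitz estimate (you use it for $f_\hom$, the paper for $F$; either implies the other via the cell formula). Your limsup sketch is also close to the paper's, which makes the ``slicing'' precise by showing $\|\iota_\eps m_t^\eps - \mu_t\|_{\KR}\to 0$ in $t$-measure and then picking good times $a_\eps\to 0$, $b_\eps\to 1$ rather than averaging.

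There is, however, a genuine gap in your flow-based liminf. You write ``in the flow-based case with superlinear growth, $f_\hom^\infty=+\infty$ off $\{j=0\}$'', apparently reasoning that if $F$ is flow-based and fails the linear upper bound then it must be superlinear. This dichotomy is false: take $F(J)=|J_1|^2+|J_2|$, which is convex and flow-based, not linearly bounded above, yet not superlinear (it is linear along the $J_2$-axis). For such $F$ the limit curve need not be KR-continuous, your restriction-to-$(0,1)$ step breaks down, and the Lipschitz rescaling estimate is unavailable. The paper handles the flow-based case by an entirely different and much simpler route: Jensen's inequality in $t$ shows that the affine interpolation $\tilde m_t^\eps = (1-t)m_0^\eps + t\, m_1^\eps$ with constant flux $\tilde J^\eps = \int_0^1 J_t^\eps\,\dd t$ has no larger energy, which reduces $\cMA_\eps$ to a \emph{static} divergence problem. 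The limit $\tilde m_t^\eps \to (1-t)\mu_0 + t\mu_1$ is then automatic from $m_i^\eps\weakto\mu_i$, and no boundary-jump issue arises. You should replace your flow-based argument with this Jensen reduction.
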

	
	The contribution of this paper is twofold. On one side, thanks to our main result, we can now include important examples, such as the $\bW_1$ distance and related approximations, see in particular Section~\ref{sec:analsys_cellprob} for some explicit computations of the cell-formula, including the equivalence between static and dynamical formulations \eqref{eq:MA-W1}, as well as some simulations. As typical in this discrete-to-continuum framework, also for $\bW_1$-type problems, the geometry of the graph plays an important role in the homogenised norm obtained in the limit, giving rise to a whole class of \textit{crystalline norms}, see Proposition~\ref{p:cryst} as well as Figure~\ref{fig:tilings}.  
	On the other hand, this work provides ideas and techniques on how to handle the presence of singularities/jumps in the framework of curves of measures which are only of bounded variation, which is of independent interest.  
	
	\smallskip
	\noindent
	\textit{Related literature}. \ 
	In their seminal work \cite{Jordan-Kinderlehrer-Otto:1998}, Jordan, Kinderlehrer and Otto showed that the heat flow in $\R^d$ can be seen as the gradient flow of the relative entropy with respect to the $2$-Wasserstein distance. In the same spirit, a discrete counterpart was proved in \cite{Maas:2011} and \cite{Mielke:2011}, independently, for the discrete heat flow and discrete relative entropy on Markov chains.
	In \cite{Forkert2020}, the authors 
	proved the evolutionary $\Gamma$-convergence of the discrete gradient-flow structures associated with finite-volume partitions and discrete Fokker--Planck equations, generalising a previous result obtained in \cite{Disser-Liero:2015} in the setting of isotropic, one-dimensional meshes. Similar results were later obtained in \cite{Hraivoronska-Tse:2023, Hraivoronska-Tse-Schlichting:2023} for the study of the
	limiting behaviour of random walks on tessellations in the diffusive limit. 
	Generalised gradient-flow structures associated to jump processes and approximation results of nonlocal and local-interaction equations have been studied in a series of works \cite{Esposito-Patacchini-Schlichting:2021, Esposito-Patacchini-Schlichting:2023,Esposito-Heinze-Schlichting:2023}. Recently, \cite{Esposito-Mikolas:2023} considered the more general setting where the graph also depends on time. 
	
	\subsection*{Acknowledgments}
	L.P. gratefully acknowledges fundings from the Deutsche Forschungsgemeinschaft (DFG, German Research Foundation) under Germany's Excellence Strategy - GZ 2047/1, Projekt-ID 390685813. F.Q. gratefully acknowledges support by the Austrian Science Fund (FWF), Project SFB F65. The authors are very grateful to Jan Maas for his useful feedback on a preliminary version of this work.

	\section{General framework: continuous and discrete transport problems}
	In this section, we first introduce the general class of problems at the continuous level we are interested in, discussing main properties and known results. We then move to the discrete, periodic framework in the spirit of \cite{Gladbach-Kopfer-Maas-Portinale:2023}, summarise the known convergence results, and discuss the open problems we want to treat in this work. In contrast with \cite{Gladbach-Kopfer-Maas-Portinale:2023}, for the sake of the exposition we restrict our analysis to the time interval $\cI := (0,1)$. Nonetheless, our main results easily extends to a general bounded, open interval $\cI \subset \R$.
	
	\subsection{The continuous setting: transport problems on the torus}
	We start by recalling the definition of solutions to the continuity equation on $\Td$.
	\begin{definition}[Continuity equation]
		\label{def:CE1}	
		A pair of measures $(\bfmu, \bfnu) \in 
		\cM_+\big((0,1) \times \T^d\big) 	
		\times 
		\cM^d\big((0,1) \times \T^d\big)$
		is said to be a solution to the continuity equation 
		\begin{align}	\label{eq:def_CE}
			\partial_t \bfmu + \nabla \cdot \bfnu = 0 
		\end{align}
		if, for all functions $\phi\in \cC_c^1\big((0,1)\times \Td\big)$, the identity
		\begin{align*}
			\int_{(0,1) \times \Td} 
			\partial_t \phi
			\dd \bfmu 
			+ \int_{(0,1) \times \Td} 
			\nabla \phi 
			\cdot \dd \bfnu 
			= 0
		\end{align*}
		holds. We use the notation $(\bfmu, \bfnu) \in \CE$. 
	\end{definition}
	
	Throughout the whole paper, we consider energy densities $f$ with the following properties.	
	
	\begin{assumption}
		\label{ass:f}
		Let 
		$f : \R_+ \times \R^d \to \R \cup \{+\infty\}$ 
		be a lower semicontinuous and convex function, 
		whose domain $\Dom(f)$ has nonempty interior. 
		We assume that there exist constants~$c > 0$ and~$C < \infty$ 
		such that the (at least) linear growth condition
		\begin{align}\label{eq:growth_f}
			f(\rho, j)
			\geq c |j| - C (\rho + 1)
		\end{align}	
		holds for all
		$\rho \in \R_+$ and $j \in \R^d$.
	\end{assumption}
	
	The corresponding \emph{recession function} 
	$f^\infty : \R_+ \times \R^d \to \R \cup \{+\infty\}$
	is defined by 
	\begin{align}
		\label{eq:def_recess}
		f^\infty(\rho,j) 
		:= 
		\lim_{t \to + \infty}
		\frac{f(\rho_0 + t \rho, j_0 + t j)}{t} \, ,
	\end{align}
	for any $(\rho_0, j_0) \in \Dom(f)$.
	It is well established that the function $f^\infty$ is lower semicontinuous, convex, and it satisfies the inequality
	\begin{align}	\label{eq:growth_finfty}
		f^\infty(\rho,j) 
		\geq 
		c|j| - C\rho,
		\qquad
		\rho \in \R_+ \, , \; j \in \R^d \, .
	\end{align}

	Let $\Leb^{d+1}$ denote the Lebesgue measure on $(0,1) \times \T^d$.
	For $\bfmu \in \cM_+\big( (0,1) \times \T^d\big)$ 
	and $\bfnu \in \cM^d\big( (0,1) \times \T^d\big)$, 
	we write their Lebesgue decompositions as 
	\begin{align*}
		\bfmu = \rho \Leb^{d+1} + \bfmu^\perp \, , \qquad
		\bfnu = j \Leb^{d+1} + \bfnu^\perp \, ,
	\end{align*}
	for some 
	$\rho \in L_+^1\big(  (0,1) \times \T^d \big)$ 
	and 
	$j \in L^1\big( (0,1) \times \T^d; \R^d\big)$.
	Given these decompositions, there always exists a measure $\bfsigma \in \cM_+\bigl( (0,1) \times \T^d\bigr)$ such that 
	\begin{align}	\label{eq:decomp_sigma}
		\bfmu^\perp = \rho^\perp \bfsigma \, , \qquad
		\bfnu^\perp = j^\perp \bfsigma \, ,
	\end{align}
	for some $\rho^\perp \in L_+^1(\bfsigma)$ and $j^\perp \in L^1(\bfsigma;\R^d)$ (take for example $\bfsigma := |\bfmu^\perp| + |\bfnu^\perp|$).
	
	\begin{definition}[Action functional]\label{def: A2}
		We define the action by
		\begin{align*}
			&\bA : 
			\cM_+\big( (0,1) \times \Td \big) 
			\times  
			\cM^d\big( (0,1) \times \T^d\big) 
			\to 
			\R \cup \{ +\infty \} \, , \\
			&\bA(\bfmu, \bfnu) 
			:= 
			\int_{(0,1) \times \Td} 
			f
			\big( 
			\rho, 
			j 
			\big) 
			\dd \Leb^{d+1}
			+
			\int_{ (0,1) \times \Td} 
			f^\infty 
			\big(
			\rho^\perp, j^\perp 
			\big)
			\dd \bfsigma \, , 
			\\
			&\bA(\bfmu) := \inf_{\bfnu}
			\left\{
			\bA(\bfmu, \bfnu) \suchthat (\bfmu, \bfnu) \in \CE
			\right\}
			\, .
		\end{align*}
	\end{definition}	
	
	\begin{remark}
		This definition does not depend on the choice of $\bfsigma$, due to the $1$-homogeneity of $f^\infty$. As $f(\rho,j) \geq - C(1+\rho)$ and $f^\infty(\rho,j) \geq - C \rho$ from \eqref{eq:growth_f} and \eqref{eq:growth_finfty}, the fact that $\bfmu\bigl((0,1) \times \Td \bigr) <\infty$ ensures that  $\bA(\bfmu, \bfnu)$ is well-defined 
		in $\R \cup \{ + \infty \}$.
	\end{remark}
	
	The natural setting to work in is the space $\BVKR$ of the curves of measures 	$\bfmu : (0,1) \to \cM_+(\Td)$ 
	such that the $\BV$-seminorm $\| \bfmu \| = \|\bfmu \|_{\BVKR}$ defined by
	\begin{align*}
		\label{eq:BV-def}
		\|\bfmu \|
		:= \sup\left\{ 	
		\int_{(0,1)} 
		\int_{\T^d}
		\partial_t \phi_t 
		\dd \mu_t		
		\dd t 
		\ : \ 
		\phi \in \cC_c^1\big((0,1); \cC^1(\Td)\big), 
		\ 
		\max_{t\in (0,1)}
		\|\phi_t\|_{\cC^1(\Td)} 
		\leq 1 
		\right\}
	\end{align*}
	is finite. Note that, by the trace theorem in BV, curves of measures in $\BVKR$ have a well defined trace at $t = 0$ and $t=1$. As shown in \cite[Lemma~3.13]{Gladbach-Kopfer-Maas-Portinale:2023}, any solution $(\bfmu, \bfnu) \in \CE$ can be disintegrated as 
	$\dd \bfmu(t,x) = \dd \mu_t(x) \dd t$ 
	for some measurable curve 
	$t \mapsto \mu_t \in \cM_+(\T^d)$
	with finite constant mass.
	If $\bA(\bfmu) < \infty$, 
	then this curve 
	belongs to $\BVKR$ and 
	\begin{align}
		\| \bfmu \|_{\BVKR}
		\leq 
		|\bfnu| \big( (0,1) \times \Td \big) \, .
	\end{align}
	
	\subsection*{Boundary conditions and lower semicontinuity}
	Define the minimal homogenised action for $\mu_0,\mu_1\in \cM_+(\Td)$ with $\mu_0(\Td) = \mu_1(\Td)$ as
	\begin{equation}
		\bMA(\mu_0,\mu_1) := \inf_{\bfmu \in \BVKR}\left\{ \bA(\bfmu)\,:\,\bfmu_{t=0} = \mu_0, \bfmu_{t=1} = \mu_1\right\} \, .
	\end{equation}
	Note that, in general, $\bMA$  may be infinite (although the measures have equal masses). Despite the lower semicontinuity property of $\bA$, the lower semicontinuity of $\bMA$ with respect to the natural weak topology of $\cM_+(\Td) \times \cM_+(\Td)$ is, in general, nontrivial. More precisely, it is a challenging question to prove (or disprove) that for any two sequences $\mu_0^n$, $\mu_1^n \in \cM_+(\Td)$, such that $\mu_i^n \to \mu_i$ weakly in $\cM_+(\Td)$ as $n \to \infty$ for $i=0,1$, the inequality
			\begin{align}	\label{eq:liminf_boundary}
				\liminf_{n \to \infty} \bMA(\mu_0^n, \mu_1^n)
				\geq \bMA(\mu_0,\mu_1) \,  .
			\end{align}
	holds. In this work, we provide a positive answer in the case when $f$ has linear growth or it is flow-based (i.e.~it does not depend on the first variable). First, we discuss the main challenges and the setup where the lower semicontinuity is already known to hold.
	
	\begin{remark}[Lack of compatible compactness]
		\label{rem:jumps}
		We know from \cite{Gladbach-Kopfer-Maas-Portinale:2023} that $\bA$ is lower semicontinuous w.r.t.~the weak topology on $\cM_+\big( (0,1) \times \Td \big) 
		\times  
		\cM^d\big( (0,1) \times \T^d\big)$. Moreover, if $(\bfmu_n, \bfnu_n)$ is a sequence of solutions to $\CE$ with uniformly bounded energy, i.e. 
		\begin{align}
			\sup_n 
			\bA(\bfmu_n, \bfnu_n) < \infty \,  ,
		\end{align}
		then $\bfmu_n$ is weakly compact and any limit $\bfmu$ belongs to $\BVKR$. 
		Nonetheless, this property does not ensure the lower semicontinuity of $\bMA$, because the convergence in $\BVKR$ does not preserve the boundary conditions (at time $t=0$ and  $t=1$). For similar issues in the setting of functionals of $\R^d$-valued curves with bounded variations and their minimisation, see e.g.~\cite{Amar-Vitali:1998}.
	\end{remark}
	
	\begin{remark}[Superlinear growth]
		\label{rem:superlinear}
		Under the strengthened assumption of superlinear growth on $f$ (with respect to the momentum variable), it is possible to prove the lower semicontinuity property \eqref{eq:liminf_boundary}, in the same way as in the proof of the discrete-to-continuum $\Gamma$-convergence of boundary-value problems of \cite[Theorem~5.10]{Gladbach-Kopfer-Maas-Portinale:2023}. More precisely, we say that $f$ is of \textit{superlinear growth} if there exists a function $\theta:[0,\infty) \to [0,\infty)$ with $\lim_{t \to \infty} \frac{\theta(t)}{t} = \infty$ and a constant $C \in \R$ such that 
		\begin{align}
			f(\rho,j) \geq (\rho +1) \theta
			\left(
			\frac{\abs{j}}{\rho+1}
			\right)
			-
			C (\rho+1) 
			\, ,	\qquad 
			\forall \rho \in \R_+ \, , \ \ j \in \R^d 
			\, .
		\end{align} 
		Arguing as in \cite[Remark~5.6]{Gladbach-Kopfer-Maas-Portinale:2023}, one shows that any function of superlinear growth must satisfy the growth condition given by Assumption~\ref{ass:f}. Moreover, in this case, the recession function satisfies $f^\infty(0,j)=+\infty$, for every $j \neq 0$. See \cite[Example~5.7 \& 5.8]{Gladbach-Kopfer-Maas-Portinale:2023} for some examples belonging to this class. By arguing similarly as in the proof of \cite[Theorem~5.9]{Gladbach-Kopfer-Maas-Portinale:2023}, assuming superlinear growth one can show that if $(\bfmu^n, \bfnu^n) \in \CE$ is a sequence of solutions to the continuity equation with bounded energy $\bA(\bfmu^n, \bfnu^n)$, then, up to a (nonrelabeled) subsequence, we have $\bfmu^n \to \bfmu$ weakly in $\cM_+((0,1)\times \Td)$ so that $\mu_t^n \to \mu_t$ weakly uniformly in $t \in (0,1)$, with limit curve $\bfmu \in \WKR$. Using this fact, it is clear that the problem of ``jumps'' in the limit explained in Remark~\ref{rem:jumps} does not occur, and the lower semicontinuity \eqref{eq:liminf_boundary} directly follows from the lower semicontinuity of $\bA$.
	\end{remark}
	\begin{remark}(Nonnegativity)\label{rem:nonnegative}
		Without loss of generality, we can assume that $f \geq 0$. Indeed, thanks to the linear growth assumption \ref{ass:f}, we can define a new function 
		\begin{align}	\label{eq:c1}
			\tilde f(\rho, j) := f(\rho,j) + C(\rho+1) \geq c|j| \geq 0
		\end{align}
		which is now nonnegative and with linear growth. Furthermore, we can compute the recess $\tilde f^\infty$ and from the definition we see that 
		\begin{align}	\label{eq:c2}
			\tilde f^\infty(\rho,j) = f^\infty(\rho,j) + C \rho \, .
		\end{align}
		Denote by $\tilde \bA$ the action functional obtained by replacing $f$ with $\tilde f$. Thanks to \eqref{eq:c1}, \eqref{eq:c2}, we have that 
		\begin{align}
			\tilde \bA(\bfmu) := 
			&\inf_{\bfnu}
			\left\{
			\tilde \bA(\bfmu, \bfnu) \suchthat (\bfmu, \bfnu) \in \CE
			\right\}
			\\
			=
			&\inf_{\bfnu}
			\left\{
			\bA(\bfmu, \bfnu) \suchthat (\bfmu, \bfnu) \in \CE
			\right\}
			+ C(\bfmu\big( (0,1) \times \T^d\big)+1)\, .
		\end{align}
		It follows that the corresponding boundary value problems are given by
		\begin{align}
			\tilde \bMA(\mu_0, \mu_1) := \bMA(\mu_0, \mu_1)  + C(\mu_0(\Td)+1) \, ,
			\quad \text{if } \mu_0(\Td) = \mu_1(\Td) \, .
		\end{align}
		Therefore, the (weak) lower semicontinuity for $\tilde \bMA$ is equivalent to that of $\bMA$.
	\end{remark}

	\subsection{The discrete framework: transport problems on periodic graphs}
	\label{sec:discrete}	
	We recall the framework of \cite{Gladbach-Kopfer-Maas-Portinale:2023}: let $(\cX, \cE)$ be a locally finite and $\Z^d$-periodic connected graph of bounded degree. We encode the set of vertices as $\cX  = \Z^d \times \V$, where $\V$ is a finite set, and we use coordinates $x = (x_\sz, x_\sa) \in \cX$.
	The set of edges $\cE \subseteq \cX \times \cX$ is symmetric and $\Z^d$-periodic, and
	we use the notation $x \sim y$ whenever $(x,y) \in \cE$.
	Let 
	$R_0 := 
	\max_{(x,y) \in \cE} 
	|  x_\sz - y_\sz|_{\infty}
	$ 
	be the maximal edge length in the supremum norm 
	$|\cdot|_{\infty}$ on $\R^d$.
	We use the notation
	$
	\XQ :=
	\{
	x \in \cX
	\, : \, x_\sz = 0
	\} 
	$ and 
	$
	\EQ := 
	\big\{
	(x, y) \in \cE	
	\, : \,  
	x_\sz = 0
	\big\}.
	$
	For a discussion concerning abstract and embedded graphs, see \cite[Remark~2.2]{Gladbach-Kopfer-Maas-Portinale:2023}.
	
	In what follows, we denote by~$\R^\cX_+$ the set of functions~$m \colon \cX \to \R_+$, and by~$\R^\cE_a$ the set of \emph{anti-symmetric} functions~$J \colon \cE \to \R$, that is, such that~$J(x,y) = -J(y,x)$. The elements of~$\R^\cE_a$ will often be called \emph{(discrete) vector fields}.
	
	\begin{assumption}[Admissible cost function]
		\label{ass:F}
		The function 
		$F : \R_+^\cX \times \R_a^\cE \to \R \cup \{+\infty\}$ 
		is assumed to have the following properties:
		\begin{enumerate}[(a)]
			\item\label{item:F0}
			$F$ is convex and lower semicontinuous.
			\item\label{item:F1}
			$F$ is \emph{local} meaning that, for some number $R_1 < \infty$, we have $F(m,J) = F(m',J')$ whenever $m, m' \in \R_+^\cX$ and $J, J' \in \R_a^\cE$ agree within a ball of radius $R_1$, i.e. 
			\begin{align*}
				m(x) &  = m'(x) 
				&& 
				\text{for all } x \in \cX 
				\text{ with } |x_\sz|_{\infty} \leq R_1 \, , \quad and
				\\
				J(x,y) & = J'(x,y)	
				&& 
				\text{for all } (x,y) \in \cE
				\text{ with } |x_\sz|_{\infty},
				|y_\sz|_{\infty} \leq R_1 \, .
			\end{align*}
			\item\label{item:F2}
			$F$ is of at least \emph{linear growth}, i.e.~there exist 
			$c > 0$ and $C < \infty$ 
			such that 
			\begin{align}
				\label{eq: growth}
				F(m,J) 
				\geq 
				c \sum_{ (x, y) \in \EQ } 
				|J(x,y)| 
				- C \Bigg(
				1 + \sum_{\substack{
						x\in \cX\\ 
						|x_\sz|_{\infty} \leq R
				}} 
				m(x) 
				\Bigg)
			\end{align}
			for any $m \in \R_+^\cX$ and $J \in \R_a^\cE$. Here, $R := \max\{R_0, R_1\}$.
			\item\label{item:F3} 
			There exist a $\Z^d$-periodic function 
			$m^\circ\in \R_+^\cX$
			and a $\Z^d$-periodic and 
			divergence-free vector field
			$J^\circ\in \R_a^\cE$ 
			such that
			\begin{align}\label{eq: int dom}
				( m^\circ, J^\circ ) \in \Dom(F)^\circ \, .
			\end{align}
		\end{enumerate}
	\end{assumption}	

\begin{remark}
	Important examples that satisfy the growth condition \eqref{eq: growth} are of the form
	\begin{align}\label{eq: Wp}
		F(m,J) 
		= \frac12 \sum_{(x,y) \in \EQ} 
		\frac{|J(x,y)|^p}{\Lambda\big(q_{xy} m(x), q_{yx} m(y)\big)^{p-1}} \, ,
	\end{align}
	where 
	$1 \leq p < \infty$, the constants
	$q_{xy}, q_{yx} > 0$ are fixed parameters defined for $(x,y)\in \EQ$, 
	and $\Lambda$ is a suitable mean.
	Functions of this type naturally appear in discretisations of Wasserstein gradient-flow structures \cite{Maas:2011,Mielke:2011,CHLZ11}, see also \cite[Remark~2.6]{Gladbach-Kopfer-Maas-Portinale:2023}.
\end{remark}

\smallskip
\noindent\emph{The rescaled graph.} \
Let $\T_\eps^d = (\eps \Z / \Z)^d$ be the discrete torus of mesh size $\eps \in 1/\N$.
We denote by 
$[\eps z]$ for $z \in \Z^d$
the corresponding equivalence classes.
Equivalently,
$\T_\eps^d = \eps \Z_\eps^d$
where
$\Z_\eps^d = \big(\Z / \tfrac1\eps \Z \big)^d$. 
The rescaled graph~$(\X_\eps, \cE_\eps)$ is defined as
\begin{align*}
	\cX_\eps := 
	\T_\eps^d \times \V
	\quad\text{and}\quad
	\cE_\eps := 
	\big\{
	\big( T_\eps^0 (x), T_\eps^0 (y) \big)
	\ : \
	(x, y) \in \cE
	\big\}
\end{align*}
where, for $\bar z \in \Z_\eps^d$,
\begin{align}
	\label{eq:def-T}
	T_\eps^{\bar z} : \cX \to \cX_\eps \, , 
	\qquad
	(z, v) 
	\mapsto 
	\big( [\eps (\bar z + z)], v \big) \, .
\end{align}
The equivalence relation $\sim$ on $\cX$ is equivalently defined on $\cX_\eps$ by means of $\cE_\eps$. Hereafter, we always assume that $\eps R_0 < \frac12$. 
For $x = \big([\eps z], v\big) \in \cX_\eps$ we write
\begin{align*}
	x_\sz := z \in \Z_\eps^d \, , \qquad 
	x_\sa := v\in \V \, .
\end{align*} 

\smallskip
\noindent\emph{The rescaled energies.} \
Let $F : \R_+^\cX \times \R_a^\cE \to \R \cup \{ + \infty\}$ be a cost function satisfying Assumption \ref{ass:F}. 
For $\eps > 0$ satisfying the conditions above, we can define a corresponding energy functional $\cF_\eps$ in the rescaled periodic setting: following \cite{Gladbach-Kopfer-Maas-Portinale:2023}, for $\bar z \in \Z_\eps^d$, each function 
$\psi : \cX_\eps \to \R$ 
induces a $\frac{1}{\eps}\Z^d$-periodic function 
\begin{align*}
	\tau_\eps^{\bar z} \psi 
	: \cX \to \R \, , 
	\qquad
	\big(\tau_\eps^{\bar z} \psi \big)(x)
	:=
	\psi\big( T_\eps^{\bar z}(x) \big)
	\quad \text{ for } 
	x \in \cX \, .
\end{align*}
Similarly, each function
$J : \cE_\eps \to \R$ 
induces a $\frac{1}{\eps}\Z^d$-periodic function
\begin{align*}
	\tau_\eps^{\bar z} J : \cE \to \R \, , \qquad  
	\big( \tau_\eps^{\bar z} J \big)(x,y)
	:=
	J
	\big( 
	T_\eps^{\bar z}(x),
	T_\eps^{\bar z}(y)
	\big)
	\quad \text{ for } 
	(x,y) \in \cE \, .
\end{align*}

\begin{definition}[Discrete energy functional]
	\label{eq:discrete-energy}
	The rescaled energy
	is defined by
	\begin{align*}
		\cF_\eps : 
		\Meps \times \Mdeps 
		&\to 
		\R \cup \{ + \infty\} \, ,	
		\qquad 
		(m, J) 
		\stackrel{\cF_\eps}{\longmapsto} 
		\sum_{z \in \Z_\eps^d}
		\eps^d
		F\bigg(
		\frac{\tau_\eps^z m}
		{\eps^d}
		,
		\frac{\tau_\eps^z J}
		{\eps^{d-1}}
		\bigg) \, .
	\end{align*}
\end{definition}

\begin{remark}
	\label{rem:well_posed_Feps}
	As observed in \cite[Remark~2.8]{Gladbach-Kopfer-Maas-Portinale:2023}, the functional
	$\cF_\eps(m,J)$ 
	is well-defined as an element in 
	$\R \cup \{ + \infty\}$.
	Indeed, the condition \eqref{eq: growth} yields
	\begin{align*}
		\cF_\eps( m, J ) 
		= 
		\sum_{z \in \Z_\eps^d}
		\eps^d
		F\bigg(
		\frac{\tau_\eps^z m}
		{\eps^d}
		,
		\frac{\tau_\eps^z J}
		{\eps^{d-1}}
		\bigg)
		& \geq 
		- C 
		\sum_{z \in \Z_\eps^d}
		\eps^d	
		\Bigg(
		1 + \sum_{\substack{
				x\in \cX\\ 
				|x_\sz|_{\infty} \leq R
		}} 
		\frac{\tau_\eps^z m(x)}{\eps^d}
		\Bigg)
		\\& \geq
		- C 
		\bigg( 
		1 + (2R + 1)^d 
		\sum_{x \in \cX_\eps} m(x)
		\bigg) 
		> - \infty \, .
	\end{align*} 
\end{remark}

\begin{definition}[Discrete continuity equation]	
	A pair $(\bfm, \bfJ)$ is said to be a solution to the discrete continuity equation if 
	$\bfm : (0,1) \to \Meps$ is continuous, 
	$\bfJ : (0,1) \to \Mdeps$ is Borel measurable, and 
	\begin{align}	\label{eq:def_discrete_CE}
		\partial_t m_t(x) + \sum_{y\sim x}J_t(x,y) = 0 
	\end{align}
	holds for all $x \in \cX_\eps$ in the sense of distributions.
	We use the notation 
	\begin{align*}
		(\bfm, \bfJ) \in \cCE_\eps \, .
	\end{align*}	
\end{definition}

\begin{remark}
	We may write \eqref{eq:def_discrete_CE} 
	as
	$
	\partial_t m_t + \dive J_t = 0
	$
	using the discrete divergence operator, given by
	\begin{align*}
		\dive J \in \R^{\cX_\eps} 
		\, , \qquad 
		\dive J(x) := \sum_{y \sim x} J(x,y)
		\, , \qquad 
		\forall J \in \R_a^{\cE_\eps} 
		\, .
	\end{align*}
\end{remark}

The proof of the next lemma can be found in \cite{Gladbach-Kopfer-Maas-Portinale:2023}.
\begin{lemma}[Mass preservation]
	\label{lem:mass-preservation}
	Let $(\bfm, \bfJ) \in\cCE_\eps$. 
	Then we have $m_s(\cX_\eps) = m_t(\cX_\eps)$ 
	for all $s, t \in (0,1)$.
\end{lemma}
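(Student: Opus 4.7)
The plan is to derive mass preservation directly from the antisymmetry of the flux and the fact that the discrete torus has finitely many vertices. First I would observe that, because $\cX_\eps = \T_\eps^d \times \V$ with $\T_\eps^d$ finite and $\V$ finite, the sum $m_t(\cX_\eps) = \sum_{x \in \cX_\eps} m_t(x)$ is a finite sum. Summing the discrete continuity equation \eqref{eq:def_discrete_CE} over $x \in \cX_\eps$, in the sense of distributions on $(0,1)$, yields
\begin{align*}
\partial_t \sum_{x \in \cX_\eps} m_t(x) + \sum_{x \in \cX_\eps} \sum_{y \sim x} J_t(x,y) = 0.
\end{align*}

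Next I would rewrite the double sum as a sum over the edge set $\cE_\eps$. Since each ordered edge $(x,y) \in \cE_\eps$ appears exactly once on the right-hand side, the double sum equals $\sum_{(x,y) \in \cE_\eps} J_t(x,y)$. By the antisymmetry $J_t(x,y) = -J_t(y,x)$, which is built into the definition of $\R_a^{\cE_\eps}$ (and into the symmetry of $\cE_\eps$), this sum vanishes: grouping the unordered pair $\{x,y\}$ together produces $J_t(x,y) + J_t(y,x) = 0$. Hence
\begin{align*}
\partial_t \, m_t(\cX_\eps) = 0 \quad \text{in } \cD'\bigl((0,1)\bigr).
\end{align*}

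Finally, the continuity of $t \mapsto m_t \in \Meps$ assumed in the definition of $\cCE_\eps$ implies that $t \mapsto m_t(\cX_\eps)$ is a continuous real-valued function on $(0,1)$ whose distributional derivative vanishes; therefore it is constant, giving $m_s(\cX_\eps) = m_t(\cX_\eps)$ for all $s,t \in (0,1)$. The only subtlety worth checking is that the summation-integration exchange used to pass the distributional derivative through the finite sum is justified, which is immediate because the sum has finitely many terms.
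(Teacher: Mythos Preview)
Your argument is correct and is exactly the standard proof one expects: finiteness of $\cX_\eps$, antisymmetry of $J_t$ on the symmetric edge set, and continuity of $t\mapsto m_t$ together force $t\mapsto m_t(\cX_\eps)$ to be constant. The paper does not actually supply its own proof of this lemma---it simply refers to \cite{Gladbach-Kopfer-Maas-Portinale:2023}---so there is nothing to compare; your write-up is the natural self-contained justification.
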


We are now ready to define one of the main objects in this paper. 

\begin{definition}[Discrete action functional]		
	\label{def:A_eps}
	For any continuous function $\bfm: (0,1) \to \R_+^{\cX_\eps}$ such that $t \mapsto \sum_{x \in \cX_\eps} m_t(x) \in L^1\bigl((0,1)\bigr)$ and any Borel measurable function $\bfJ: (0,1) \to \R_a^{\cE_\eps}$, we define
	\begin{align*}
		\cA_\eps(\bfm, \bfJ) & :=	
		\int_0^1
		\cF_\eps( m_t, J_t ) 
		\dd t
		\in \R \cup \{ + \infty\} \, .	
	\end{align*}
	Furthermore, we set
	\begin{align*}
		\cA_\eps(\bfm)
		& :=
		\inf_{\bfJ} \Big\{
		\cA_\eps(\bfm, \bfJ) 
		\ : \
		(\bfm, \bfJ) \in \cCE_\eps
		\Big\} \, .
	\end{align*}			 
\end{definition}
By similar argument as in Remark~\ref{rem:well_posed_Feps}, one can show \cite[Remark~2.13]{Gladbach-Kopfer-Maas-Portinale:2023} that $\cA_\eps(\bfm, \bfJ)$ 
is well-defined as an element in 
$\R \cup \{ + \infty\}$, as a consequence of the growth condition \eqref{eq: growth}.

\begin{definition}[Discrete action functional]		
	\label{def:MA_eps}
	For any pair of boundary data $m_0$, $m_1 \in \R_+^{\cX_\eps}$, we define the associated discrete boundary value problem as
	\begin{align*}
		\cMA_\eps(m_0,m_1) & :=	
		\inf 
		\left\{ 	 
		\cA_\eps( \bfm ) 
		\suchthat 
		\bfm: (0,1) \to \R_+^{\cX_\eps} 
		\, , \quad 
		\bfm_{t=0} = m_0
		\tand 
		\bfm_{t=1} = m_1
		\right\} \, .	
	\end{align*}		 
\end{definition}

The aim of this work is to study the asymptotic behaviour of the energies $\cMA_\eps$ as $\eps \to 0$ under the assumption \eqref{eq: growth}. 

\section{Statement and proof of the main result}
In this paper we extend the $\Gamma$-convergence result for the functionals $\cMA_\eps$ towards $\bMA_\hom$, proved in \cite{Gladbach-Kopfer-Maas-Portinale:2023} for superlinear cost functionals, to two cases: under the assumption of linear growth (bound both from below and above) and when the function $F$ does not depend on $\rho$.

\begin{assumption}[Linear growth]
	\label{ass:F_lineargr}
	We say that a function 	$F : \R_+^\cX \times \R_a^\cE \to \R \cup \{+\infty\}$ 
	has \emph{linear growth} if it satisfies
	\begin{align*}
		F(m,J) \leq C
		\Bigg(
		1 +
		\sum_{
			\substack{
				(x,y) \in \cE \\ 
				|x_\sz|_{\infty} \leq R
			}
		}  
		|J(x,y)| 
		+ \sum_{
			\substack{
				x\in \cX\\ 
				|x_\sz|_{\infty} \leq R
		}} 
		m(x) 
		\Bigg) 
	\end{align*}
	for some constant $C<\infty$.
\end{assumption}

\begin{assumption}[Flow-based]
	\label{ass:F_fbased}
	We say that a function	$F : \R_+^\cX \times \R_a^\cE \to \R \cup \{+\infty\}$ is of \emph{flow-based type} if it depends only on the the second variable, i.e.~(with a slight abuse of notation) $F(m,J) = F(J)$, for some $F: \R_a^\cE \to \R \cup \{ +\infty \}$.  
\end{assumption}

Similarly, we say that $f:\R_+ \times \R^d \to \R$ is of \textit{flow-based type} if it does not depend on the $\rho$ variable, i.e.,~$f(\rho,j) = f(j)$. In this case, the problem simplifies significantly, and the dynamical variational problem described in Definition~\ref{def: A2} admits an equivalent, static formulation (see \eqref{eq:static_contA}).

\begin{remark}[Linear growth vs Lipschitz]
	While working with convex functions, to assume a linear growth condition (from above) is essentially equivalent to require Lipschitz continuity with respect to the second variable.
\end{remark}
\begin{lemma}[Lipschitz continuity] \label{lemma:lip}
	Let $f: \R_+ \times \R^d \to \R$ be a function, convex in the second variable. Let~$C > 0$. Then the following are equivalent:
	\begin{enumerate}
		\item for every $\rho \in \R_+$ and $j \in \R^d$ the inequality  $f(\rho,j) \le C(1+ \rho + \abs{j} )$ holds.
		\item for every $\rho \in \R_+$, the function $f(\rho,\cdot) : \R^d \to \R_+$ is Lipschitz continuous (uniformly in $\rho$) with constant~$C$, and the inequality $f(\rho,0) \le C(1+ \rho)$ holds.
	\end{enumerate}
\end{lemma}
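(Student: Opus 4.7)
\textbf{Proof plan for Lemma~\ref{lemma:lip}.} The implication $(2) \Rightarrow (1)$ is immediate: if $f(\rho, \cdot)$ is $C$-Lipschitz and $f(\rho, 0) \le C(1+\rho)$, then by the triangle inequality
\[
f(\rho, j) \le f(\rho, 0) + C |j| \le C(1+\rho) + C|j| = C(1+\rho+|j|).
\]
So the interesting direction is $(1) \Rightarrow (2)$. The bound on $f(\rho, 0)$ is obtained for free by setting $j = 0$ in (1). The Lipschitz bound is the content of the argument.

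The plan is to fix $\rho \in \R_+$, set $g := f(\rho, \cdot) : \R^d \to \R$, and use convexity together with the upper bound in (1) to reduce to a one-dimensional slicing argument. Concretely, I will fix a unit vector $v \in \R^d$ and $j \in \R^d$, and study the convex function $\phi(t) := g(j + tv)$, $t \in \R$. By the standard monotonicity of difference quotients for convex functions of a real variable, the right derivative
\[
\phi'_+(s) = \lim_{t \downarrow s} \frac{\phi(t) - \phi(s)}{t - s}
\]
exists and satisfies $\phi'_+(s) \le \limsup_{t \to +\infty} \frac{\phi(t) - \phi(s)}{t - s}$. Using $(1)$, the right-hand side is bounded by
\[
\limsup_{t \to +\infty} \frac{C(1 + \rho + |j + tv|) - \phi(s)}{t-s} \le C,
\]
since $|j + tv| \le |j| + t$ and $\phi(s)$ is finite. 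Symmetrically, $\phi'_-(s) \ge -C$. Hence $\phi$ is $C$-Lipschitz on $\R$. Since $v$ and $j$ were arbitrary, $g$ has all directional difference quotients bounded by $C$, which yields $|g(j_1) - g(j_2)| \le C|j_1 - j_2|$ for every $j_1, j_2 \in \R^d$, uniformly in $\rho$.

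The only (minor) subtlety is that a priori $g$ need not have a pointwise lower bound, so one cannot directly compare $\phi(t)$ against $\phi(s)$ from below using the upper bound alone. This is bypassed by working with the right difference quotient at the \emph{fixed} base point $s$, where $\phi(s)$ is a finite constant that is absorbed in the $\limsup$ as $t \to \infty$. This is really the whole content of the proof; there is no deeper obstacle, and the argument is essentially a standard fact about convex functions on $\R^d$ that grow at most linearly at infinity.
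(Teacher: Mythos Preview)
Your proposal is correct and follows essentially the same route as the paper: both fix $\rho$, slice the convex function $f(\rho,\cdot)$ along a line, and bound the one-dimensional difference quotient by its value at infinity using the linear upper bound from (1). The paper's proof slices directly along the segment from $j_1$ to $j_2$ (writing $f(\rho,j_2)-f(\rho,j_1) \le \frac{f(\rho,j_1+t(j_2-j_1))-f(\rho,j_1)}{t}$ for $t\ge 1$ and letting $t\to\infty$), whereas you first bound directional derivatives along unit vectors, but the argument is the same in substance.
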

In the very same spirit, one can show the analogous result at the discrete level.
\begin{lemma}[Lipschitz continuity II] \label{lemma:lip_disc}
	Let	$F : \R_+^\cX \times \R_a^\cE \to \R \cup \{+\infty\}$ be convex in the second variable. Let~$C > 0$. Then the following are equivalent:
	\begin{enumerate}
		\item $F$ is of linear growth, in the sense of Assumption~\ref{ass:F_lineargr}, with the same constant~$C$.
		\item For every $m \in  \R_+^\cX $, we have that
		\begin{align*}
			F(m,0) \leq C
			\bigg(
			1 + \sum_{
				\substack{
					x\in \cX\\ 
					|x_\sz|_{\infty} \leq R
			}} 
			m(x) 
			\bigg) \, ,
		\end{align*}
		as well as that $F$ is Lipschitz continuous with constant~$C$ in the second variable, in the sense that 
		\begin{align}
			\label{eq:Lip_F_disc}
			\left|
			F(m,J_1) - F(m,J_2)
			\right|
			\leq 
			C 
			\sum_{
				\substack{
					(x,y) \in \cE \\ 
					|x_\sz|_{\infty} \leq R
				}
			}  
			|J_1(x,y)- J_2(x,y)|
			\, ,  
		\end{align}
		for every $J_1, J_2 \in \R_a^\cE$.
	\end{enumerate}
\end{lemma}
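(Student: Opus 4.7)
The plan is to prove the two implications separately, in parallel to the proof of Lemma~\ref{lemma:lip}.

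The direction $(2) \Rightarrow (1)$ is immediate from the triangle inequality. Assuming (2), we write
\begin{equation*}
F(m,J) \leq F(m,0) + |F(m,J) - F(m,0)| \leq C\bigg(1 + \sum_{|x_\sz|_\infty \leq R} m(x)\bigg) + C \sum_{\substack{(x,y)\in \cE \\ |x_\sz|_\infty \leq R}} |J(x,y)| \, ,
\end{equation*}
which is exactly the linear growth bound in Assumption~\ref{ass:F_lineargr} with the same constant $C$.

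For the more delicate direction $(1) \Rightarrow (2)$, the bound on $F(m,0)$ follows by setting $J=0$ in the linear growth inequality. The heart of the proof is extracting the Lipschitz estimate \eqref{eq:Lip_F_disc} with the sharp constant $C$ from convexity and the upper bound. Fix $m \in \R_+^\cX$, $J_1, J_2 \in \R_a^\cE$, and set $\eta := J_2 - J_1$. For any $T > 1$ the identity $J_2 = (1 - 1/T)\, J_1 + (1/T)\,(J_1 + T\eta)$ together with convexity of $F(m,\cdot)$ gives
\begin{equation*}
F(m, J_2) - F(m, J_1) \leq \frac{1}{T}\bigl[ F(m, J_1 + T\eta) - F(m, J_1) \bigr] \, .
\end{equation*}
Applying Assumption~\ref{ass:F_lineargr} to $F(m, J_1 + T\eta)$ and using the bound $|J_1 + T\eta| \leq |J_1| + T|\eta|$ inside the finite local sum yields
\begin{equation*}
\frac{F(m, J_1 + T\eta) - F(m,J_1)}{T}\leq C \sum_{\substack{(x,y)\in \cE \\ |x_\sz|_\infty \leq R}} |\eta(x,y)| + \frac{1}{T}\bigl(\,\text{terms independent of }T\,\bigr) \, .
\end{equation*}
The lower bound \eqref{eq: growth} ensures $F(m, J_1)$ is finite, so passing to the limit $T \to \infty$ gives $F(m, J_2) - F(m, J_1) \leq C\sum_{|x_\sz|_\infty \leq R} |J_1(x,y) - J_2(x,y)|$. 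Swapping the roles of $J_1$ and $J_2$ (by the same argument applied to $-\eta$) produces the reverse inequality, and \eqref{eq:Lip_F_disc} follows.

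The main subtlety is arranging that the Lipschitz constant coming out of the argument is exactly $C$, not some larger multiple. This is why the extrapolation-to-infinity trick via convexity is necessary, and why the locality Assumption~\ref{ass:F}\eqref{item:F1} plays a role: only finitely many edges contribute to the local sum on the right-hand side of the growth bound, so the $T$-independent remainders genuinely vanish after division by $T$.
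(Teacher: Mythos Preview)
Your proof is correct and follows exactly the extrapolation-via-convexity strategy of the paper's proof of Lemma~\ref{lemma:lip} (the paper does not give a separate argument for Lemma~\ref{lemma:lip_disc}, merely remarking that it is shown ``in the very same spirit''). One small slip: you invoke the lower bound~\eqref{eq: growth} to guarantee that $F(m,J_1)$ is finite, but that estimate is part of Assumption~\ref{ass:F} and is not among the hypotheses of the lemma; finiteness of $F(m,J_1)$ follows instead from the codomain $\R\cup\{+\infty\}$ (so $F>-\infty$ automatically) together with the upper bound in~(1) that you are already assuming (so $F<+\infty$).
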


\begin{proof}[Proof of Lemma~\ref{lemma:lip}]
	Let us assume the first condition and fix $\rho \in \R_+$ as well as $j_1,j_2 \in \R^d$. It follows from the convexity in the second variable that the function
	\begin{align*}
		\R \ni t \mapsto f(\rho, j_1 + t(j_2-j_1))
	\end{align*} 
	is convex. In particular, the inequalities
	\begin{align*}
		f(\rho, j_2) - f(\rho,j_1) 
		\le
		\frac
		{f(\rho, j_1 + t(j_2-j_1)) - f(\rho, j_1)}{t} 
		\le	 
		\frac	
		{ C \big( 1+\rho + \big| j_1+t(j_2-j_1)  \big| \big)   - f(\rho, j_1)}{t}
	\end{align*}
	hold for every $t \ge 1$. Letting $t \to \infty$, we thus find
	\begin{align*}
		f(\rho, j_2) - f(\rho,j_1) \le
		C \abs{j_2-j_1} 
	\end{align*}
	and, by arbitrariness of the arguments, the claimed Lipschitz continuity. The fact that $f(\rho,0) \le C(1+ \rho)$ trivially follows from the first condition.
	
	Conversely, if the second condition holds, we necessarily have
	\begin{align*}
		f(\rho,j) 
		\le 
		C  \abs{j} + f(\rho,0) 
		\le 
		C(1+ \rho + \abs{j}) 
		\, , 
	\end{align*}
	for every $\rho \in \R_+$ and $j \in \R^d$, which is precisely the first condition in the statement. 
\end{proof}
\begin{remark}
	It is not hard to show that if $F$ is of linear growth, then $f_\hom$ is also of linear growth (and therefore, in view of the latter lemma, it is Lipschitz in the second variable uniformly w.r.t.~the first one), see e.g. \cite{Gladbach-Maas-Portinale:2023}.
\end{remark}

Let us recall the effective energy density $f_\hom$ which describes the limit energy, which is given by a cell formula.
For given $\rho \geq 0$ and $j \in \R^d$, $f_\hom(\rho, j)$ is obtained by minimising over the unit cube the discrete energy among functions $m$ and vector fields $J$ \emph{representing} $\rho$ and $j$. More precisely, the function $f_\hom:\R_+ \times \R^d \to \R_+$ is given by
\begin{align}	\label{eq:intro_fhom}
	f_\hom(\rho,j) :=
	\inf_{m,J}
	\bigl\{
	F(m,J)
	\suchthat 
	(m,J) \in \Rep(\rho,j) 
	\bigr\} \, ,
\end{align}
where the set of \textit{representatives} $\Rep(\rho,j)$ consists of all $\Z^d$-periodic functions $m: \cX \to \R_+$ and all $\Z^d$-periodic anti-symmetric discrete vector fields $J : \cE \to \R$ satisfying
\begin{align}	\label{eq:intro_effective}
	\sum_{x \in \cX^Q} m(x) = \rho 	
	\, , \quad 
	\dive J = 0 
	\, , 
	\tand
	\Eff (J) :=  \frac12 
	\sum_{(x,y) \in \cE^Q} J(x,y) (y-x) = j \, . 
\end{align}
We denote by $\bA_\hom$ and $\bMA_\hom$ the action functionals corresponding to the choice $f=f_\hom$. In order to talk about $\Gamma$-convergence, we need to specificy which type of discrete-to-continuum topology/convergence we adopt (in the same spirit of \cite{Gladbach-Kopfer-Maas-Portinale:2023}).

\begin{definition}[Embedding]
	For $\eps > 0$ and $z \in \R^d$, let $Q_\eps^z := \eps z + [0, \eps)^d \subseteq \T^d$ 
	denote the projection of the cube of side-length $\eps$ based at $\eps z$ to the quotient~$\T^d = \R^d/\Z^d$.
	For $m \in \Meps$ and $J \in \R_a^{\cE_\eps}$, we define 
	$\iota_\eps m \in \cM_+(\Td)$ and $\iota_\eps J \in \MdT$ 
	by 	
	\begin{align}
		\label{eq: iota m}
		\iota_\eps m 
		&:= 
		\eps^{-d}	
		\sum_{x \in \cX_\eps}
		m(x)
		\Leb^d|_{Q_\eps^{x_\sz}}
		\, ,
		\\
		\label{eq: iota J}
		\iota_\eps J
		&	:= 
		\eps^{-d+1}	
		\sum_{(x,y) \in \cE_\eps}
		\frac{J(x,y)}{2}
		\bigg(\int_0^1
		\Leb^d|_{ Q_\eps^{(1-s)x_\sz + s y_\sz}}
		\dd s\bigg)
		(y_\sz - x_\sz)
		\, .
	\end{align}
	With a slight abuse of notation, for $\bfm: (0,1) \to \R_+^{\cX_\eps}$ we also write $\iota_\eps \bfm \in \cM_+((0,1) \times \Td)$ for the continuous space-time measure with time disintegration given by $t \mapsto \iota_\eps m_t$. Moreover, for a given sequence of nonnegative discrete measures $m^\eps \in \R_+^{\cX_\eps}$, we write 
	\begin{align*}
		m_\eps \to \mu \in \cM_+(\Td)
		\quad \text{weakly}
		\qquad 
		\text{iff}
		\qquad 
		\iota_\eps m^\eps \to \mu 
		\quad 
		\text{weakly in }
		\cM_+(\Td) 
		\, .
	\end{align*}
	Similarly, for $\bfm^\eps: (0,1) \to \R_+^{\cX_\eps}$ we write $\bfm^\eps \to \bfmu \in \cM_+((0,1) \times \Td)$ with an analogous meaning. Similar notation is used for (Borel, possibly discontinuous) curves of fluxes~$\bfJ : (0,1) \to \R^{\cE_\eps}_a$ and convergent sequences of (curves of) fluxes.
\end{definition}
\begin{remark}[Preservation of the continuity equation]
	The definition of such embedding for masses and fluxes ensures that solutions to the discrete continuity equation are mapped to solutions of $\CE$, cfr.~\cite[Lemma~4.9]{Gladbach-Kopfer-Maas-Portinale:2023}. 
\end{remark}

We are ready to state our main result.

\begin{theorem}[Main result] 
	\label{theo:main}
	Let $(\cX,\cE,F)$ be as described in Section~\ref{sec:discrete} and Assumption~\ref{ass:F}. Assume that $F$ is either of flow-based type (Assumption~\ref{ass:F_fbased}) or with linear growth (Assumption~\ref{ass:F_lineargr}). Then, in either case, the functionals $\cMA_\eps$ $\Gamma$-convergence to $\bMA_{\hom}$ as $\eps \rightarrow 0$ with respect to the weak-topology of $\cM_+(\Td) \times \cM_+(\Td)$. More precisely, we have:
\begin{enumerate}
	\item[$(1)$] \textbf{Liminf inequality}: For any sequences $m_0^\eps,m_1^\eps \in \cM_+(\cX_\eps)$ such that $m_i^\eps \to \mu_i$ weakly in $\cM_+(\Td)$ for~$i=~0,1$, we have that
	\begin{align}	\label{eq:liminf}
		\liminf_{\eps \rightarrow 0} 
		\cMA_\eps(m_0^\eps,m_1^\eps) 
		\geq 
		\bMA_{\hom}(\mu_0,\mu_1)	
		\, .
	\end{align}
		\item[$(2)$] \textbf{Limsup inequality}: For any $\mu_0,\mu_1 \in  \cM_+(\T^d)$, there exist sequences   $m_0^\eps, m_1^\eps \in \cM_+(\cX_\eps)$ such that $m_i^\eps \to \mu_i$ weakly in $\cM_+(\Td)$ for~$i=0,1$, and 
	\begin{align}	\label{eq:limsup}
		\limsup_{\eps \rightarrow 0} 
		\cMA_\eps(m_0^\eps, m_1^\eps) 
		\leq 
		\bMA_{\hom}(\mu_0,\mu_1)
		\, .
	\end{align}
\end{enumerate}
\end{theorem}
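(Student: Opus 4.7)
The approach combines the time-space $\Gamma$-convergence $\cA_\eps \stackrel{\Gamma}{\to} \bA_\hom$ together with the $\BVKR$-compactness result established in~\cite{Gladbach-Kopfer-Maas-Portinale:2023}, with a \emph{boundary-jump correction}: I augment each near-optimal discrete curve with small constant-in-time extensions, which are $O(\delta)$-cheap under linear growth (Lemma~\ref{lemma:lip_disc}) and force the correct endpoints to appear as $\BVKR$-traces of the weak limit. The key property of $f_\hom$ used to close the argument is its Lipschitz continuity in the flux variable, inherited from that of $F$ (Lemma~\ref{lemma:lip}).

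\textbf{Liminf.} Fix $m_i^\eps \to \mu_i$ weakly and let $(\bfm^\eps, \bfJ^\eps) \in \cCE_\eps$ be $o(1)$-almost optimal for $\cMA_\eps(m_0^\eps, m_1^\eps)$, assuming the liminf is finite. I extend each $\bfm^\eps$ to $(-\delta, 1+\delta)$ as $\bfm^\eps_t = m_0^\eps$ on $(-\delta, 0]$ and $\bfm^\eps_t = m_1^\eps$ on $[1, 1+\delta)$, with $\bfJ^\eps = 0$ on both lateral strips. By Lemma~\ref{lemma:lip_disc} the additional action is $O(\delta)$. Up to a subsequence the extended pair converges weakly to some $(\hat\bfmu, \hat\bfnu) \in \CE$ on $(-\delta, 1+\delta) \times \Td$ whose restriction to the two lateral strips equals $\mu_0 \otimes \Leb^1$ and $\mu_1 \otimes \Leb^1$; thus the $\BVKR$-traces of $\hat\bfmu$ at $t = -\delta$ and $t = 1+\delta$ are precisely $\mu_0$ and $\mu_1$. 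The time-space $\Gamma$-liminf of~\cite{Gladbach-Kopfer-Maas-Portinale:2023} then yields
\[
\bA_\hom^{(-\delta, 1+\delta)}(\hat\bfmu, \hat\bfnu) \le \liminf_{\eps \to 0} \cA_\eps(\bfm^\eps, \bfJ^\eps) + O(\delta) \, .
\]
Rescaling time via $s \mapsto -\delta + s(1+2\delta)$ produces a pair $(\tilde\bfmu, \tilde\bfnu) \in \CE$ on $(0,1) \times \Td$ connecting $\mu_0$ to $\mu_1$; this multiplies fluxes by $\alpha := 1+2\delta$, but by the Lipschitz continuity of $f_\hom$ (and hence of $f_\hom^\infty$, obtained by passing to the limit in the defining relation~\eqref{eq:def_recess}) the new action satisfies
\[
\bA_\hom^{(0,1)}(\tilde\bfmu, \tilde\bfnu) \le \frac{1}{\alpha}\Big(\bA_\hom^{(-\delta, 1+\delta)}(\hat\bfmu, \hat\bfnu) + 2C\delta\, |\hat\bfnu|\big((-\delta, 1+\delta) \times \Td\big) \Big) \, ,
\]
and $|\hat\bfnu|$ is in turn controlled by the action via the lower bound~\eqref{eq:growth_f}. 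Since $\tilde\bfmu$ is admissible for $\bMA_\hom(\mu_0, \mu_1)$, taking $\delta \to 0$ yields~\eqref{eq:liminf}.

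\textbf{Limsup.} I pick a near-optimal $(\bfmu^*, \bfnu^*) \in \CE$ for $\bMA_\hom(\mu_0, \mu_1)$ and modify it (at an $O(\delta)$ cost, again by the Lipschitz property of $f_\hom$) so that it is constant in time on $(0, \delta) \cup (1-\delta, 1)$, equal to $\mu_0$ and $\mu_1$ there. The time-space $\Gamma$-limsup of~\cite{Gladbach-Kopfer-Maas-Portinale:2023} furnishes $(\bfm^\eps, \bfJ^\eps) \in \cCE_\eps$ with $\iota_\eps \bfm^\eps \to \bfmu^*$, $\iota_\eps \bfJ^\eps \to \bfnu^*$ and $\limsup \cA_\eps \le \bA_\hom(\bfmu^*,\bfnu^*) + O(\delta)$. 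Because $\bfmu^*$ is constant in time near the boundary, the discrete recovery can be arranged (e.g.\ via a short time-averaging at the endpoints) to yield boundary values $m_i^\eps := \bfm^\eps_{t=i}$ converging weakly to $\mu_i$; sending $\delta \to 0$ concludes.

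\textbf{Flow-based case and main obstacle.} When $F$ is flow-based, $f$ need not be Lipschitz, and the reparameterisation estimate above may fail; instead, by Jensen's inequality applied in time both $\cMA_\eps$ and $\bMA_\hom$ reduce to \emph{static} problems of minimising $F$ (resp.\ $f_\hom$) subject to divergence-type constraints $-\nabla\cdot \bar j = \mu_1 - \mu_0$. Time traces play no role in such formulations, so the desired $\Gamma$-convergence follows from the $\Gamma$-convergence of the static problems. The main technical obstacle throughout is the \emph{jump phenomenon} flagged in Remark~\ref{rem:jumps}: the $\BVKR$-weak limit of the discrete curves does not a priori see the prescribed endpoints. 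The time-extension/reparameterisation device (linear growth case) and the static reduction (flow-based case) are the two mechanisms that bypass this obstacle.
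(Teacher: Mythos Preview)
Your overall strategy matches the paper's: for the liminf under linear growth you use constant-in-time extension plus a time reparameterisation controlled by the Lipschitz property, and for the flow-based case you reduce to a static divergence-constrained problem via Jensen. The only real difference in the liminf is cosmetic: the paper compresses the discrete curve into $(\delta,1-\delta)$ and applies the Lipschitz estimate at the \emph{discrete} level (Lemma~\ref{lemma:lip_disc}), whereas you extend to $(-\delta,1+\delta)$ and reparameterise at the \emph{continuous} level using the Lipschitz property of $f_\hom$. Both routes work; yours requires in addition the continuous Lipschitz estimate for $\bA_\hom$ including the singular part, which the paper carries out separately in Section~\ref{sec:lsc}.

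There is, however, a genuine gap in your limsup. After modifying $\bfmu^*$ to be constant on $(0,\delta)\cup(1-\delta,1)$ and invoking the time-space recovery sequence $(\bfm^\eps,\bfJ^\eps)$, you need boundary data $m_i^\eps$ with $m_i^\eps\to\mu_i$ \emph{and} $\cMA_\eps(m_0^\eps,m_1^\eps)\le \cA_\eps(\bfm^\eps,\bfJ^\eps)+o(1)$. Your suggestion of ``time-averaging at the endpoints'' gives the convergence, but the averaged measures $\bar m_i^\eps:=\delta^{-1}\int m_t^\eps\,dt$ are not endpoints of any obvious curve in $\cCE_\eps$ with controlled action; modifying $\bfm^\eps$ to start at $\bar m_0^\eps$ breaks the continuity equation. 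The paper's fix is different and sharper: from the weak convergence $\iota_\eps\bfm^\eps\to\bfmu^*$ one deduces (via compactness, Remark~\ref{rem:compactness}) that $\|\iota_\eps m_t^\eps-\mu_t^*\|_{\KR}\to 0$ \emph{in measure}, hence one can \emph{select} times $a_\eps\in(0,\delta)$ and $b_\eps\in(1-\delta,1)$ with $a_\eps\to 0$, $b_\eps\to 1$ and $m_{a_\eps}^\eps\to\mu_0$, $m_{b_\eps}^\eps\to\mu_1$. One then sets $m_i^\eps:=m^\eps_{a_\eps}, m^\eps_{b_\eps}$ and extends $\bfm^\eps$ constantly outside $[a_\eps,b_\eps]$ with a fixed periodic divergence-free flux $\hat J^\eps$ (built from $\bar J$ in~\eqref{eq:ass_limsup}); the extra cost vanishes because $a_\eps,\,1-b_\eps\to 0$.

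Note also that the paper's limsup is proved under the single hypothesis~\eqref{eq:ass_limsup}, which holds both for linear growth and for flow-based $F$ (take $\bar J=J^\circ$ from Assumption~\ref{ass:F}\ref{item:F3}); so no separate ``static $\Gamma$-convergence'' is needed for the flow-based limsup, and the preliminary modification of $\bfmu^*$ to be constant near the boundary is unnecessary.
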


\begin{remark}[Convergence of the actions and superlinear regime]
	\label{rem:gammaconv}
	The $\Gamma$-convergence of the energies $\cA_\eps$ towards $\bA_\hom$ under Assumption~\ref{ass:F} is the main result of \cite[Theorem~5.4]{Gladbach-Kopfer-Maas-Portinale:2023}. Related to it, similarly as discussed in Remark~\ref{rem:superlinear}, the superlinear case \cite[Assumption~5.5]{Gladbach-Kopfer-Maas-Portinale:2023}, not included in the statement, has already been proved in \cite{Gladbach-Kopfer-Maas-Portinale:2023}, and it follows directly from the aforementioned convergence $\cA_\eps \xrightarrow{\Gamma} \bA_\hom$ and a strong compactness result which holds in such a framework, see in particular \cite[Theorem~5.9\&5.10]{Gladbach-Kopfer-Maas-Portinale:2023}. Without the superlinear growth assumption, the proof is much more involved and requires extra work and new ideas, which are the main contribution of this paper. 
\end{remark}

\begin{remark}[Compactness under linear growth from below]
	\label{rem:compactness}
	Just assuming Assumption~\ref{ass:F}, the following compactness result for sequences of bounded energy has been proved in \cite[Theorem~5.4]{Gladbach-Kopfer-Maas-Portinale:2023}: if $\bfm^\eps:(0,1) \to \R_+^{\cX_\eps}$ is such that
	\begin{align*}
		\sup_{\eps >0} \cA_\eps(\bfm^\eps) < \infty 
		\tand 
		\sup_{\eps >0}
		\bfm^\eps 
		\big(
		(0,1) \times \cX_\eps
		\big)
		< \infty 
		\, , 
	\end{align*}
	then there exists a curve $\bfmu= \mu_t(\dd x) \dd t \in \BVKR$ such that, up to a (nonrelabeled) subsequence, we have
	\begin{align*}
		\bfm^\eps \to \bfmu 
		\quad \text{weakly in }
		\cM_+
		\big(
		(0,1) \times \T^d
		\big)	
		\tand 
		m_t^\eps \to \mu_t
		\quad \text{weakly in }
		\cM_+(\Td)
		\, ,
	\end{align*}
	for a.e.~$t \in (0,1)$.
	This is going to be an important tool in the proof of our main result. 
\end{remark}

\subsection{Proof of the limsup inequality}
In this section we prove the limsup inequality in Theorem~\ref{theo:main}. This proof does not require Assumption~\ref{ass:F_lineargr} nor Assumption~\ref{ass:F_fbased}, but rather a weaker assumption, which is always satisfied under both hypotheses.
\begin{proposition}[$\Gamma$-limsup]
	Let $\mu_0, \mu_1$ be nonnegative measures on $\T^d$.
	%	, and let $(\eps_n)_{n \in \N}$ be an infinitesimal sequence.
	Assume that there exists a $\Z^d$-periodic and divergence-free vector field $\bar J \in \R_a^\X$ such that 
	\begin{align}
		\label{eq:ass_limsup}
		F(m,\bar J) \le C \Biggl(1 + \sum_{\substack{x \in \mathcal X 
				\\ 
				\abs{x}_{\infty} \le R}} m(x)\Biggr) \, , \qquad m \in \R^{\X}_+ 
		\, , 
	\end{align}
	for some finite constant $C$. Then there exist two sequences $(m_0^\eps)_{\eps >0}$ and $(m_1^\eps)_{\eps > 0}$ in $\R_+^{\cX_\eps}$ such that $m_i^\eps \to \mu_i$ weakly in $\cM_+(\T^d)$ for $i=0,1$, and
	\begin{equation} \label{eq:plimsup:limsup} 
		\limsup_{\eps \to 0} 
		\cMA_\eps(m_0^\eps,m_1^\eps) \le \bMA_\hom(\mu_0,\mu_1)
		\, . 
	\end{equation}
\end{proposition}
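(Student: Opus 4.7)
The strategy is to combine the time-space $\Gamma$-limsup $\cA_\eps \xrightarrow{\Gamma} \bA_\hom$ from \cite[Theorem~5.4]{Gladbach-Kopfer-Maas-Portinale:2023} with a short time-extension of a near-optimal curve for $\bMA_\hom$. The role of assumption \eqref{eq:ass_limsup} is precisely to make this extension cheap at the $\bA_\hom$-level, which is what ultimately forces the boundary values of the recovery sequence to converge to the prescribed data.

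Assuming $\bMA_\hom(\mu_0, \mu_1) < \infty$ (else the inequality is trivial), the first step is to fix $\delta > 0$ and pick a near-optimal $\bfmu \in \BVKR$ with $\bfmu_{t=i} = \mu_i$ and $\bA_\hom(\bfmu) \leq \bMA_\hom(\mu_0, \mu_1) + \delta$. For a small parameter $\eta > 0$, I extend $\bfmu$ to $(-\eta, 1+\eta)$ by setting $\tilde \bfmu_t := \mu_i$ (stationary) on the two extension intervals, paired with the constant-in-space continuum flux $\tilde \bfnu_t := \bar j \, \Leb^d$, where $\bar j := \Eff(\bar J)$. Since $\bar j$ is constant, $\tilde \bfnu_t$ is divergence-free on $\T^d$, so $(\tilde \bfmu, \tilde \bfnu) \in \CE$. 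The cell formula combined with \eqref{eq:ass_limsup} yields $f_\hom(\rho, \bar j) \leq F(m, \bar J) \leq C(1+\rho)$, upon taking any $\Z^d$-periodic representative $m$ of $\rho$ together with $J = \bar J$; a similar argument applied to the recession function (using $(0,\bar j) \in \Dom(f_\hom)$ as a base point) gives $f_\hom^\infty(\rho^\perp, 0) \leq C\rho^\perp$. Consequently the $\bA_\hom$-cost of each extension interval is $O(\eta)$.

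Applying the time-space $\Gamma$-limsup to $\tilde \bfmu$ on $(-\eta, 1+\eta)$ produces a discrete sequence $\tilde \bfm^\eps$ with $\limsup_\eps \cA_\eps(\tilde \bfm^\eps) \leq \bA_\hom(\tilde \bfmu) \leq \bMA_\hom(\mu_0, \mu_1) + \delta + O(\eta)$. I define $m_i^\eps := \tilde \bfm^\eps_{t=i}$ and $\bfm^\eps := \tilde \bfm^\eps|_{(0,1)}$; after reducing to $F \geq 0$ via Remark~\ref{rem:nonnegative}, the restriction satisfies $\cMA_\eps(m_0^\eps, m_1^\eps) \leq \cA_\eps(\bfm^\eps) \leq \cA_\eps(\tilde \bfm^\eps)$. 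A diagonal argument in $\delta, \eta \to 0$ then delivers \eqref{eq:plimsup:limsup}, provided that $m_i^\eps \to \mu_i$ weakly.

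The most delicate step is the verification of this weak boundary convergence, and this is where the main obstacle lies. Here the uniform $L^1$-in-time bound on the embedded fluxes $\iota_\eps \tilde \bfJ^\eps$ implied by \eqref{eq: growth} is essential. For any $\phi \in \cC^1(\T^d)$, the discrete continuity equation (after embedding) gives $\bigl|\tfrac{d}{dt}\langle \phi, \iota_\eps \tilde m^\eps_t\rangle\bigr| \leq \|\nabla\phi\|_\infty \, |\iota_\eps \tilde J^\eps_t|(\T^d)$, so $\langle \phi, \iota_\eps m_0^\eps\rangle$ differs from the time-average $\tfrac{1}{\eta}\int_{-\eta}^0 \langle \phi, \iota_\eps \tilde m^\eps_t\rangle \, dt$ by a quantity bounded by $\int_{-\eta}^0 |\iota_\eps \tilde J^\eps_t|(\T^d)\,dt$. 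As $\eps \to 0$ the latter tends to $\eta\,|\bar j|$, while the weak convergence $\iota_\eps \tilde \bfm^\eps \to \tilde \bfmu$, together with $\tilde \bfmu_t \equiv \mu_0$ on $[-\eta, 0]$, makes the time-average converge to $\langle \phi, \mu_0\rangle$. Diagonalizing in $\eps \to 0$ and then $\eta \to 0$ yields $\iota_\eps m_0^\eps \to \mu_0$ weakly, and symmetrically for $m_1^\eps$. The central technical point is thus that the extension must simultaneously be cheap at the $\bA_\hom$-level, respect the continuity equation at the continuum level, and produce a flux whose $L^1$-in-time bound absorbs the lack of time-continuity that general curves in $\BVKR$ may exhibit.
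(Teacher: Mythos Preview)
Your overall strategy---extend a near-optimal curve in time, invoke the time-space $\Gamma$-limsup of \cite{Gladbach-Kopfer-Maas-Portinale:2023}, and then restrict---is reasonable, but the boundary-convergence step contains a genuine gap. You assert that
\[
\int_{-\eta}^0 |\iota_\eps \tilde J^\eps_t|(\T^d)\,\dd t \xrightarrow[\eps\to 0]{} \eta\,|\bar j|,
\]
but the recovery sequence from \cite[Theorem~5.4]{Gladbach-Kopfer-Maas-Portinale:2023} is a black box: it guarantees only $\tilde\bfm^\eps \to \tilde\bfmu$ weakly and $\limsup_\eps \cA_\eps(\tilde\bfm^\eps,\tilde\bfJ^\eps)\le \bA_\hom(\tilde\bfmu)$. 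Nothing forces the discrete flux on the extension interval $(-\eta,0)$ to approximate $\bar j\,\Leb^d$, and even if $\iota_\eps\tilde\bfJ^\eps$ happened to converge weakly there, total variation is only lower semicontinuous, giving the wrong inequality for your purposes. The growth condition~\eqref{eq: growth} bounds the \emph{global} $L^1$-in-time flux by the global action plus mass, which is $O(1)$, not $O(\eta)$. So the error term you need to kill does not, on the information given, go to zero as $\eta\to 0$.

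The gap is repairable, but not for free: one can combine the $\Gamma$-limsup on $(-\eta,1+\eta)$ with the $\Gamma$-\emph{liminf} on $(0,1+\eta)$ (using that $\tilde\bfmu$ does not charge $\{0\}\times\T^d$) to deduce $\limsup_\eps \cA_\eps^{(-\eta,0)}(\tilde\bfm^\eps,\tilde\bfJ^\eps)=O(\eta)$, and then the lower growth bound converts this into the desired $O(\eta)$ control on $\int_{-\eta}^0|\iota_\eps\tilde J^\eps_t|\,\dd t$. This extra ``energy localisation'' argument is missing from your proposal.

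For comparison, the paper avoids the issue by reversing the order of operations: it applies the $\Gamma$-limsup to the \emph{unextended} curve on $(0,1)$, then uses the compactness statement (Remark~\ref{rem:compactness}) to show $\|\iota_\eps m_t^\eps-\mu_t\|_{\KR}\to 0$ in measure, extracts times $a_\eps\to 0$, $b_\eps\to 1$ with $\iota_\eps m^\eps_{a_\eps}\to\mu_0$, $\iota_\eps m^\eps_{b_\eps}\to\mu_1$, sets $m_i^\eps$ equal to these snapshots, and finally extends \emph{at the discrete level} by the constant pair $(m^\eps_{a_\eps},\hat J^\eps)$ with $\hat J^\eps$ the rescaling of $\bar J$. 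Assumption~\eqref{eq:ass_limsup} then bounds the discrete extension cost directly, with no need to control fluxes coming out of a black-box recovery sequence.
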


\begin{proof}	
	We may and will assume that $\bMA_\hom(\mu_0,\mu_1) < \infty$. We also claim that it suffices to prove the statement with $\bMA(\mu_0,\mu_1)+1/k$ in place of the right-hand side of \eqref{eq:plimsup:limsup} for every $k \in \N_1$. Indeed, assume we know the existence of sequences $(m_i^{\eps,k})_\eps$ such that $m_i^{\eps,k} \to \mu_i$, and
	\begin{align*}
		\limsup_{\eps \to 0} \cMA_\eps(m_0^{\eps,k},m_1^{\eps,k}) \le \bMA(\mu_0,\mu_1)+1/k 
		\, , 
	\end{align*}
	for every $k \in \N_1$. Since $\T^d$ is compact, the weak convergence is equivalent to convergence in the Kantorovich–Rubinstein norm. Hence, for every $k$ we can find $\eps_k$ such that, when $\eps \le \eps_k$,
	\begin{align*}
		\cMA_\eps(m_0^{\eps,k},m_1^{\eps,k}) \le \bMA_\hom(\mu_0,\mu_1)+2/k 
		\tand 
		\max_{i=0,1} \norm{\iota_\eps m_i^{\eps,k}-\mu_i}_{\KR} \le 1/k \, . 
	\end{align*}
	We can also assume that $\eps_{k+1} \le \frac{\eps_k}{2}$, for every $k$. It now suffices to set
	\begin{align*}
		k_\eps := \max\{ k \in \N_1 \, : \, \eps_k  \ge  \eps \} 
		\tand 
		m_i^\eps := m_i^{\eps,k_\eps} 
		\, , 
	\end{align*}
	for every $\eps$ and $i=0,1$ to get
	\begin{align*}
		\limsup_{\eps \to 0} \cMA_\eps(m_0^\eps,m_1^\eps) \le \bMA_\hom (\mu_0,\mu_1)+ \limsup_{\eps \to 0} \frac{2}{k_\eps} 
	\end{align*}
	as well as
	\begin{align*}
		\limsup_{\eps \to 0} 
		\max_{i=0,1} \norm{\iota_\eps m_i^\eps -\mu_i}_{\KR} \le \limsup_{\eps \to 0}  \frac{1}{k_\eps} 
		\, . 
	\end{align*}
	The claim is proven, since $k_\eps \to_\eps \infty$, as it can be readily verified.
	
	Thus, let us now choose $k$ and keep it fixed. By definition of $\bMA_\hom$, there exists $\bfmu = \mu_t(\dd x) \dd t \in \BVKR$ with $\bfmu_{t=0} = \mu_0, \bfmu_{t=1} = \mu_1$ and such that
	\begin{align*}
		\bA_\hom(\bfmu) \le \bMA_\hom(\mu_0,\mu_1)+1/k
		\, .
	\end{align*}
	Recall from Remark~\ref{rem:gammaconv} that $\cA_\eps \xrightarrow{\Gamma} \bA_\hom$: in particular \cite[Theorem 5.1]{Gladbach-Kopfer-Maas-Portinale:2023}, there exists a recovery sequence $(\bfm^\eps, \bfJ^\eps) \in \cCE_{\eps}$ such that $\bfm^\eps \to \bfmu$ weakly and
	\begin{align*}
		\limsup_{\eps \to 0} \cA_\eps(\bfm^\eps, \bfJ^\eps) \le \bA_\hom(\bfmu)
		\, . 
	\end{align*}	
	We shall prove that $\norm{\iota_\eps m_t^\eps - \mu_t}_{\KR(\T^d)} \to 0$ in ($\Leb^1$-)measure or, equivalently, that
	\begin{equation} \label{eq:plimsup:l1} 
		\lim_{\eps \to 0} 
		\int_0^1 \min\left\{\norm{\iota_\eps m_t^\eps - \mu_t}_{\KR(\T^d)},  1 \right\} \dd t = 0 
		\, .
	\end{equation}
	In order to do this, assume by contradiction that there exists a subsequence such that
	\begin{align*}
		\int_0^1 \min\left\{\norm{\iota_{\eps_n} m_t^{\eps_n} - \mu_t}_{\KR(\T^d)}, 1 \right\} \dd t > \delta \, , \qquad n \in \N 
		\, , 
	\end{align*}
	for some $\delta > 0$. Up to possibly extracting a further subsequence, it can be easily checked that the hypotheses of \cite[Theorem 5.4]{Gladbach-Kopfer-Maas-Portinale:2023} are satisfied (cfr.~Remark~\ref{rem:compactness}), and thus there exists a further, not relabeled, subsequence, such that, for almost every $t \in (0,1)$, $m_t^{\eps_n} \to \mu_t$ weakly and thus $\norm{\iota_{\eps_n}m_t^{\eps_n} - \mu_t}_{\KR(\T^d)} \to 0$ since $\T^d$ is compact. The dominated convergence theorem yields an absurd. From \eqref{eq:plimsup:l1} we deduce that for every $T \in (0,1/2)$ there exists a sequence of times $(a_\eps^T)_\eps \subseteq (0,T)$ such that
	\begin{align*}
		\lim_{\eps \to 0} \norm{\iota_\eps m_{a_\eps^T}^\eps - \mu_{a_\eps^T}}_{\KR(\T^d)} = 0
		\, . 
	\end{align*}
	With a diagonal argument, we find a sequence $(a_\eps)_\eps \subseteq (0,1/2)$ such that
	\begin{align*}
		\lim_{\eps \to 0} a_\eps = 0 
		\tand 
		\lim_{\eps \to 0} \norm{\iota_\eps m_{a_\eps}^\eps - \mu_{a_\eps}}_{\KR(\T^d)} = \lim_{\eps \to 0} \norm{\iota_\eps m_{a_\eps}^\eps - \mu_0}_{\KR(\T^d)} = 0
		\, .
	\end{align*}
	Similarly, we can find another sequence $(b_\eps)_\eps \subseteq (1/2,1)$ such that
	\begin{align*}
		\lim_{\eps \to 0} b_\eps = 1 \quad \text{and} \quad \lim_{\eps \to 0} \norm{\iota_\eps m_{b_\eps}^\eps - \mu_1}_{\KR(\T^d)}=0
		\, .
	\end{align*}
	We claim the sought recovery sequences is provided by $m_0^\eps \coloneqq m^\eps_{a_\eps}$ and $m_1^\eps \coloneqq m_{b_\eps}^\eps$. In order to show this, let us define $\hat J^\eps : \cE_{\eps} \to \R$ via the formula\footnote{The definition is well-posed because $\eps R_0$ is assumed to be smaller than $1/2$.} (recall the assumption \eqref{eq:ass_limsup})
	\begin{align*}
		\frac{\tau_\eps^z \hat J^\eps}{\eps^{d-1}} \coloneqq \bar J \, , \qquad z \in \Z^d_\eps
		\, , 
	\end{align*}
	so that $\hat J^\eps$ is divergence-free. Now define
	\begin{align*}
		\tilde m^\eps_t \coloneqq 
		\begin{cases}
			m^\eps_{a_\eps} &\text{if } t \in [0,a_\eps) \\
			m^\eps_t &\text{if } t \in [a_\eps,b_\eps] \\
			m^\eps_{b_\eps} &\text{if } t \in (b_\eps,1]
		\end{cases}
		\tand
		\tilde J^\eps_t \coloneqq 
		\begin{cases}
			\hat J^\eps &\text{if } t \in [0,a_\eps) \\
			J^\eps_t &\text{if } t \in [a_\eps,b_\eps] \\
			\hat J^\eps &\text{if } t \in (b_\eps,1]
		\end{cases}	\, .
	\end{align*}
	It is readily verified that $(\tilde \bfm^\eps, \tilde \bfJ^\eps)$ solves the continuity equation for every $\eps$. Therefore
	\begin{align}
		\label{eq:main_est_limsup}
		\cMA_\eps(m_0^\eps,m_1^\eps) 
		&\leq 
		\cA_\eps(\tilde \bfm^\eps,\tilde \bfJ^\eps) 
		= \int_0^1 \sum_{z \in \Z^d_\eps} \eps^d F\left( \frac{\tau^z_\eps \tilde m_t^\eps}{\eps^d}, \frac{\tau^z_\eps \tilde J^\eps_t}{\eps^{d-1}} \right) \, \dd t 
		\\
		&= \int_0^{a_\eps} \sum_{z \in \Z^d_\eps} \eps^d F\left( \frac{\tau^z_\eps m_{a_\eps}^\eps}{\eps^d}, \bar J \right) \, \dd t + \int_{a_\eps}^{b_\eps} \sum_{z \in \Z^d_\eps} \eps^d F\left( \frac{\tau^z_\eps m_t^\eps}{\eps^d}, \frac{\tau^z_\eps J^\eps_t}{\eps^{d-1}} \right) \, \dd t 
		\\
		&\quad + \int_{b_\eps}^1 \sum_{z \in \Z^d_\eps} \eps^d F\left( \frac{\tau^z_\eps m_{b_\eps}^\eps}{\eps^d}, \bar J \right) \, \dd t 
		\\
		&\eqqcolon I_1 + I_2 + I_3
		\, .
	\end{align}
	The first and last integral can be estimated using the assumption \eqref{eq:ass_limsup}. Indeed,
	\begin{align*}
		I_1+I_3 
		&\le C \sum_{z \in \Z^d_\eps} \Biggl((a_\eps+1-b_\eps) \eps^d  + \sum_{\substack{x \in \mathcal X \\ |x_\sz|_{\infty} \le R}} \left( a_\eps (\tau^z_\eps m_{a_\eps}^\eps)(x) + (1-b_\eps) (\tau^z_\eps m_{b_\eps}^\eps)(x) \right)\Biggr) 
		\\
		&\le C \left( (a_\eps+1-b_\eps) + (2R+1)^d \sum_{x \in \X_\eps} \left(a_\eps m_{a_\eps}^\eps(x) + (1-b_\eps) m_{b_\eps}^\eps(x)\right) \right) 
		\\
		&= C \left( (a_\eps+1-b_\eps) + (2R+1)^d \left(a_\eps \iota_\eps m_{a_\eps}^\eps(\T^d) + (1-b_\eps)\iota_\eps m_{b_\eps}^\eps(\T^d)\right) \right)
		\, , 
	\end{align*}
	and in the limit we find
	\begin{align}
		\label{eq:mit_est_limsup} 
		\limsup_{\eps \to 0} I_1+I_3 \le C\left(0 + (2R+1)^d(0\cdot \mu_0(\T^d) + 0 \cdot \mu_1(\T^d))\right) = 0 
		\, .
	\end{align}
	As for the second integral, thanks to Assumption~\ref{ass:F}(c) we have that
	\begin{align}
		\label{eq:est_limsup} 
		I_2 - \mathcal A_\eps(\bfm^\eps,\bfJ^\eps) 
		&= -\int \int_{(0,a_\eps)\cup(b_\eps,1)} \sum_{z \in \Z^d_\eps} \eps^d F\left( \frac{\tau^z_\eps m_t^\eps}{\eps^d}, \frac{\tau^z_\eps J^\eps_t}{\eps^{d-1}} \right) \, \dd t 
		\\
		&\le C' \left((a_\eps+1-b_\eps)+(2R+1)^d \iota_\eps\bfm^\eps\left(\left((0,a_\eps)\cup(b_\eps,1)\right) \times \T^d\right) \right) 
		\, .
	\end{align}
	Since $(\iota_\eps \bfm^\eps)_\eps$ converges weakly, for every $a , b \in (0,1)$, we have that 
	\begin{align*}
		\limsup_{\eps \to 0}
		\iota_\eps\bfm^\eps
		\left(
		\left(
		(0,a_\eps)\cup(b_\eps,1)
		\right) 
		\times \T^d
		\right) 
		&\leq  
		\limsup_{\eps \to 0}
		\iota_\eps \bfm^\eps 
		\left(
		\big(
		(0,a) \cup (b,1)
		\big)
		\times \Td  
		\right)
		\\
		&\leq 
		\bfmu
		\left(
		\big(
		(0,a) \cup (b,1)
		\big)
		\times \Td  
		\right)
		\, .
	\end{align*}
	Using the fact that the previous estimate holds for every $a,b \in (0,1)$,
	%	and that $\bfmu = \mu_t(\dd x) \dd t$ does not give mass to the boundary of the time interval,
	we obtain that
	\begin{align*}
		\limsup_{\eps \to 0}
		\iota_\eps\bfm^\eps
		\left(
		\left(
		(0,a_\eps)\cup(b_\eps,1)
		\right) 
		\times \T^d
		\right) 
		= 0 
		\, .
	\end{align*}
	This, together with the estimate obtained in \eqref{eq:est_limsup}, gives us the inequality  
	\begin{align}
		\label{eq:final_est_limsup}
		\limsup_{\eps \to 0} I_2 \le \limsup_{\eps \to 0} \cA_\eps(\bfm^\eps, \bfJ^\eps)
		\, .
	\end{align}
	In conclusion, from \eqref{eq:main_est_limsup}, \eqref{eq:mit_est_limsup}, and \eqref{eq:final_est_limsup} we find
	\begin{align*}
		\limsup_{\eps \to 0} 
		\cMA_\eps(m_0^\eps,m_1^\eps) 
		\le 
		\limsup_{\eps \to 0} 
		\cA_\eps(\bfm^\eps, \bfJ^\eps) \le \bA(\bfmu) 
		\le 
		\bMA(\mu_0,\mu_1) + 1/k
		\,  ,
	\end{align*}
	which is sought upper bound.
\end{proof}

\subsection{Proof of the liminf inequality}
In this section, we provide the proof of the liminf inequality in Theorem~\ref{theo:main}.   Let $m_0^\eps$, $m_1^\eps$ be a sequence of measures weakly converging to $\mu_0$, $\mu_1$, respectively. We want to show that
\begin{align}
	\label{eq:claim_liminf}
	\liminf_{\eps \to 0}
	\cMA_\eps(m_0^\eps,m_1^\eps)
	\geq 
	\bMA_\hom(\mu_0,\mu_1)
	\, .
\end{align}
Note that we may assume that~$m_0^\eps(\cX_\eps)=m_1^\eps(\cX_\eps)$ for every~$\eps>0$.

We split the proof into two parts: first for $F$ with linear growth and then for $F$ of flow-based type, respectively Assumption~\ref{ass:F_lineargr} and Assumption~\ref{ass:F_fbased}.
\subsubsection{Case 1: $F$ with linear growth}
Assume that $F$ satisfies Assumption~\ref{ass:F_lineargr}. Recall that, as a consequence of Lemma~\ref{lemma:lip_disc}, $F$ is Lipschitz continuous as well, in the sense of \eqref{eq:Lip_F_disc}.
\begin{proof}[Proof of the liminf inequality (linear growth)]
	With a very similar argument as the one provided by Remark~\ref{rem:nonnegative} in the continuous setting, we can with no loss of generality assume that $F$ is nonnegative. Moreover, up to extracting a subsequence, we may assume that the limit inferior in~\eqref{eq:claim_liminf} is a true finite limit.
	Let $(\bfm^\eps,\bfJ^\eps) \in \cCE_\eps$ be approximate optimal solutions associated to $\cMA_\eps(m_0^\eps,m_1^\eps)$, i.e. such that
	\begin{align}	\label{eq:optimal_curve}
		\lim_{\eps \to 0} \bigl( \cA_\eps(\bfm^\eps,\bfJ^\eps) - \cMA_\eps(m_0^\eps,m_1^\eps) \bigr) = 0 \, .
	\end{align}
	As usual, we write $\dd \bfm^\eps (t,x) = m_t^\eps(\dd x) \dd t$ for some measurable curve $t \mapsto m_t^\eps \in \R_+^{\cX_\eps}$ of constant, finite mass.
	By compactness (Remark~\ref{rem:compactness}), we know that up to a further non-relabeled subsequence, $\bfm^\eps \to \bfmu$ weakly in $\cMT$ with $\bfmu \in \BVKR$, as well as $\bfJ^\eps \to \bfnu$ weakly in $\cM^d\bigl((0,1) \times \Td\bigr)$, for some $(\bfmu, \bfnu) \in \CE$. Due to the lack of continuity of the trace operators in $\BV$, a priori we cannot conclude that $\bfmu_{t=0} = \mu_0$ and $\bfmu_{t=1} = \mu_1$. In other words, there might be a ``jump'' in the limit as $\eps \to 0$ at the boundary of $(0,1)$. In order to take care of this problem, we rescale our measures $\bfm^\eps$ in time, so as to be able to ``see'' the jump in the interior of $(0,1)$, where we are able to estimate the energy.
	
	To this purpose, for $\delta \in (0,1/2)$,  we define $\cI_\delta:= (\delta,1-\delta)$ and $\bfm^{\eps,\delta} \in \BVKR$ as 
	\begin{align}
		\label{eq:rescale_disc}
		m_t^{\eps,\delta} := 
		\begin{cases}
			m_0^\eps	
			&\text{if } t \in (0,\delta] \\
			m_{\frac{t-\delta}{1-2\delta}}^\eps 
			&\text{if } t \in \cI_\delta \\
			m_1^\eps
			&\text{if } t \in [1-\delta,1)
		\end{cases} 
		\, , \quad 
		\dd \bfm^{\eps,\delta}(t,x) := m_t^{\eps,\delta}(\dd x) \dd t 
		\, .
	\end{align}
	By construction, the convergence of the boundary data, and the fact that, by assumption, $\bfm^\eps \to \bfmu$ weakly, it is straightforward to see that $\bfm^{\eps,\delta} \to \hat \bfmu^\delta$ weakly, where 
	\begin{align}
		\label{eq:def_muhat}
		\hat \mu_t^\delta := 
		\begin{cases}
			\mu_0
			&\text{if } t \in (0,\delta] \\
			\mu_{\frac{t-\delta}{1-2\delta}}
			&\text{if } t \in \cI_\delta \\
			\mu_1
			&\text{if } t \in [1-\delta,1)
		\end{cases} 
		\, , \quad 
		\dd \hat \bfmu^\delta(t,x) := \hat \mu_t^\delta(\dd x) \dd t 
		\, .
	\end{align}
	Note that the rescaled curve $t \mapsto \hat \mu_t^\delta$ might have discontinuities at $t=\delta$ and $t= 1-\delta$, which correspond to the possible jumps in the limit as $\eps \to 0$ for $\bfm^\eps$ at $\{0,1\}$. Nevertheless, $\hat \bfmu^\delta$ is a competitor for $\bMA(\mu_0,\mu_1)$, which, by the $\Gamma$-convergence of $\cA_\eps$ towards $\bA_\hom$ (Remark~\ref{rem:gammaconv}), ensures that 
	\begin{align}	\label{eq:first_est_lsc}
		\liminf_{\eps \to 0}
		\cA_\eps(\bfm^{\eps,\delta}) \geq \bA_\hom(\hat \bfmu^\delta) \geq \bMA_\hom(\mu_0,\mu_1)
		\, .
	\end{align}
	We are left with estimating from above the left-hand side of the latter displayed equation. To do so, we seek a suitable curve of discrete vector fields $\bfJ^{\eps,\delta}$ so that $(\bfm^{\eps,\delta}, \bfJ^{\eps,\delta}) \in \CE$ and whose energy $\cA_\eps(\bfm^{\eps,\delta},\bfJ^{\eps,\delta})$ is comparable with $\cA_\eps(\bfm^{\eps},\bfJ^{\eps})$ for small $\delta>0$. It is useful to introduce the following notation: for $\delta \in (0,1/2)$, 
	\begin{align}
		r_\delta: (0,1) \to \cI_\delta
		\, , \quad 
		r_\delta(s) = (1-2\delta)s + \delta \, ,
		\\	
		R_\delta:(0,1) \times \Td \to \cI_\delta \times \Td 
		\, , \quad 
		R_\delta(s,x) = (r_\delta(s), x) 
		\, . 
	\end{align}
	
	We note that
	\[
	{\iota_\eps \bfm^{\eps,\delta}}|_{\cI_\delta \times \Td} = (1-2\delta) (R_\delta)_\# \iota_\eps \bfm^\eps.
	\]
	Indeed, for any test function~$\varphi \in C_b$, we have
	\begin{align*}
		\int_{\cI_\delta \times \Td} \varphi \dd \iota_\eps \bfm^{\eps, \delta}
		&=
		\int_{\delta}^{1-\delta} \int_{\Td} \varphi(t,x) \dd \iota_\eps m_t^{\eps,\delta} \dd t
		=
		(1-2\delta) \int_0^1 \int_{\Td} \varphi(r_\delta(s),x) \dd \iota_\eps m_s^{\eps} \dd s \\
		&=
		(1-2\delta) \int_{(0,1) \times \Td} \varphi \circ R_\delta \dd \iota_\eps \bfm^\eps
		=
		(1-2\delta) \int_{(0,1) \times \Td} \varphi \dd (R_\delta)_\# \iota_\eps \bfm^\eps
	\end{align*}
	Furthermore, we have the following lemma.
	
	\begin{lemma}	\label{lemma:change_var}
		Let $\bfsigma \in \cM_+\bigl( (0,1) \times \T^d \bigr)$ be a singular measure with respect to $\Leb^{d+1}$. Then, the measure $(R_\delta)_{\#} \bfsigma \in \cM_+ (\cI_\delta \times \Td)$ is also singular with respect to $\Leb^{d+1}$. Moreover, for every measure $\bfxi = f \Leb^{d+1} + f^\perp \bfsigma \in \cM^n((0,1) \times \Td)$, we have the decomposition 
		\begin{align}
			(R_\delta)_{\#} \bfxi = f^\delta \Leb^{d+1} + f^{\delta,\perp}  (R_\delta)_{\#} \bfsigma 
			\, , 
		\end{align}
		where the respective densities are given by the formulas
		\begin{align}
			f^\delta(t,x) 
			=  \frac1{1-2\delta} f\bigl(r_\delta^{-1}(t), x\bigr)
			\tand 
			f^{\delta,\perp}(t,x) 
			=  f^\perp\bigl(r_\delta^{-1}(t), x\bigr)
			\, .
		\end{align}
	\end{lemma}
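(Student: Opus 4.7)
The plan is to exploit the fact that $R_\delta$ is an affine bi-Lipschitz bijection onto $\cI_\delta \times \Td$ whose Jacobian equals the constant $(1-2\delta)$ (the map acts as the identity on the spatial factor). Almost everything then reduces to a careful change of variables.

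First I would dispose of the singularity claim. By assumption there exists a Borel set $N \subseteq (0,1)\times\Td$ with $\Leb^{d+1}(N)=0$ and $\bfsigma\bigl(((0,1)\times\Td) \setminus N\bigr)=0$. Since $R_\delta$ is a Borel isomorphism, $R_\delta(N)$ is Borel, and by the Jacobian computation we have $\Leb^{d+1}(R_\delta(N)) = (1-2\delta)\Leb^{d+1}(N) = 0$. By the very definition of the pushforward, $(R_\delta)_\#\bfsigma$ is concentrated on $R_\delta(N)$, which gives the orthogonality $(R_\delta)_\#\bfsigma \perp \Leb^{d+1}$.

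For the density formulas, I would test against an arbitrary $\varphi \in \cC_b(\cI_\delta \times \Td)$ and use the pushforward identity together with a change of variables $t = r_\delta(s)$, $\dd t = (1-2\delta) \dd s$. For the absolutely continuous part,
\begin{align*}
\int \varphi \dd (R_\delta)_\#(f \Leb^{d+1})
&= \int_{(0,1)\times\Td} \varphi\bigl(R_\delta(s,x)\bigr) f(s,x) \dd s \dd x \\
&= \int_{\cI_\delta \times \Td} \varphi(t,x)\, \frac{f(r_\delta^{-1}(t),x)}{1-2\delta} \dd t \dd x
= \int \varphi \cdot f^\delta \dd \Leb^{d+1},
\end{align*}
which identifies the Lebesgue density. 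For the singular part, since $f^{\delta,\perp}(R_\delta(s,x)) = f^\perp(s,x)$ by the very definition of $f^{\delta,\perp}$, the pushforward identity gives
\begin{align*}
\int \varphi \dd (R_\delta)_\#(f^\perp \bfsigma)
&= \int \varphi \circ R_\delta \cdot f^\perp \dd \bfsigma
= \int (\varphi \cdot f^{\delta,\perp}) \circ R_\delta \dd \bfsigma
= \int \varphi \cdot f^{\delta,\perp} \dd (R_\delta)_\#\bfsigma.
\end{align*}
Linearity of the pushforward then combines these two identities into the announced decomposition.

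There is no genuine obstacle here: the lemma is essentially a bookkeeping statement packaging the change of variables for the affine map $R_\delta$, and the only point requiring attention is the factor $1/(1-2\delta)$ in the Lebesgue density, which comes from the $1$-dimensional Jacobian in the time variable, as opposed to the absence of such a factor for the singular part, which is $1$-homogeneous under reparametrization.
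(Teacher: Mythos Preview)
Your proof is correct and follows essentially the same approach as the paper: both arguments establish singularity by pushing a Lebesgue-null carrier of $\bfsigma$ through the bi-Lipschitz map $R_\delta$, and both obtain the density formulas by testing against $\varphi \in \cC_b$ and using the change of variables $t = r_\delta(s)$. The only cosmetic difference is that the paper packages the density computation into a single general claim about $\dd (R_\delta)_\#(f'\bfsigma')/\dd (R_\delta)_\#\bfsigma' = f' \circ R_\delta^{-1}$ and applies it twice (once with $\bfsigma' = \Leb^{d+1}$, once with $\bfsigma' = \bfsigma$), whereas you carry out the two cases separately.
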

	\begin{proof}
		By assumption, $\bfsigma$ is singular with respect to $\Leb^{d+1}$, which means there exists a set $A \subset (0,1) \times \Td$ such that $\Leb^{d+1}(A) = 0 =  \bfsigma(A^c)$. By the very definition of push-forward and the bijectivity of $R_\delta$, we then have that 
		\begin{align}
			(R_\delta)_{\#} \bfsigma \big((R_\delta(A))^c \big) 
			=
			\bfsigma \Big( R_\delta^{-1} \big(R_\delta(A^c) \big)  \Big) 
			= 
			\bfsigma(A^c) 
			= 0 \, , 
		\end{align}
		whereas, by the scaling properties of the Lebesgue measure, we have that $\Leb^{d+1}(R_\delta(A)) = (1-2\delta) \Leb^{d+1}(A) = 0$, which shows the claimed singularity. The second part of the lemma follows from the fact that $(R_\delta)_{\#} \Leb^{d+1} = (1-2\delta)^{-1} \Leb^{d+1}$ and the following statement: for every $\bfxi' = f' \bfsigma'$ with $\bfsigma' \in  \cM_+((0,1) \times \Td)$, we claim that 
		\begin{align}	\label{eq:claim_density}
			\frac
			{\dd (R_\delta)_{\#} \bfxi'}
			{\dd (R_\delta)_{\#} \bfsigma'}
			(t,x) = f'(R_\delta^{-1}(t,x))
			\, , \quad \forall (t,x) \in \cI_\delta \times \Td \, .
		\end{align}
		Indeed, by definition of push-forward, we have for every test function $\varphi \in C_b$ 
		\begin{align*}
			\int \varphi \dd (R_\delta)_{\#} \bfxi'
			= 
			\int (\varphi \circ R_\delta) \dd \bfxi'
			= 
			\int (\varphi \circ R_\delta) f' \dd \bfsigma'
			= 
			\int \varphi \cdot (f' \circ R_\delta^{-1}) \dd (R_\delta)_{\#} \bfsigma'
			\, ,
		\end{align*}
		which indeed shows \eqref{eq:claim_density}. 
	\end{proof}
	We continue the proof of the liminf inequality by defining the sought $\bfJ^{\eps,\delta} \in \cM^d\bigl((0,1)\times \Td\bigr)$ as follows: if $\hat \iota_\delta: \cM^d(\cI_\delta\times \Td) \to \cM^d\bigl((0,1)\times \Td\bigr)$ is the natural embedding obtained by extending to $0$ any measure  outside $\cI_\delta$, then the sought curve of vector fields is uniquely determined by 
	\begin{align}
		\label{eq:def_hat_iota}
		\iota_\eps \bfJ^{\eps,\delta} := \hat \iota_\delta  \big[ (R_\delta)_{\#} (\iota_\eps \bfJ^\eps) \big] \in  \cM^d\bigl((0,1)\times \Td\bigr)
		\, ,
	\end{align}
	that is,
	\begin{align*}
		J_t^{\eps,\delta} := 
		\begin{cases}
			0	
			&\text{if } t \in (0,\delta] \\
			\displaystyle \frac{1}{1-2\delta} J_{\frac{t-\delta}{1-2\delta}}^\eps 
			&\text{if } t \in \cI_\delta \\
			0
			&\text{if } t \in [1-\delta,1)
		\end{cases} \, .
	\end{align*}
	
	We claim that $	( \bfm^{\eps,\delta}, \bfJ^{\eps,\delta}  ) \in \cCE_\eps$ and  
	\begin{align}	\label{eq:claim_final}
		\bA(\bfm^{\eps,\delta}, \bfJ^{\eps,\delta}) 
		\leq
		\bA(\bfm^\eps, \bfJ^\eps) 
		+ C(F) \delta 
		\Big(
		(\iota_\eps \bfm^\eps) ((0,1) \times \Td)  + |\iota_\eps \bfJ^\eps|((0,1) \times \Td) + 1
		\Big)
		\, ,  
	\end{align}
	where $C(F) \in \R_+$ only depends on $F$ (specifically on $C$ as given in the linear growth assumption~\ref{ass:F_lineargr}). This would suffice to conclude the proof of the sought liminf inequality. Indeed, from \eqref{eq:optimal_curve} and \eqref{eq:claim_final} we infer
	\begin{align}
		\nonumber
		\liminf_{\eps \to 0} 
		&\cMA_\eps(m_0^\eps , m_1^\eps) 
		=
		\liminf_{\eps \to 0} 
		\cA_\eps(\bfm^\eps, \bfJ^\eps) 
		\\ 
		\nonumber
		&\geq 
		\liminf_{\eps \to 0}	
		\cA_\eps(\bfm^{\eps,\delta}, \bfJ^{\eps,\delta}) 
		- C(F) \delta 
		\Big(
		( \iota_\eps \bfm^\eps ) ((0,1) \times \Td)  + |\iota_\eps  \bfJ^\eps|((0,1) \times \Td) + 1
		\Big) 
		\nonumber
	\end{align}
	which, combined with \eqref{eq:first_est_lsc}, yields
	\[
		\liminf_{\eps \to 0} 
		\cMA_\eps(m_0^\eps , m_1^\eps) 
			\geq 
		\bMA_\hom(\mu_0,\mu_1) 
		- C(F) \delta 
		\Big(
		\mu_0(\Td)  + |\bfnu|((0,1) \times \Td) + 1
		\Big)
	\]
	for any $\delta \in (0,1/2)$. We conclude by letting~$\delta \to 0$.
	
	We are left with the proof of~$	( \bfm^{\eps,\delta}, \bfJ^{\eps,\delta}  ) \in \cCE_\eps$ and of the claim \eqref{eq:claim_final}.
	
	\smallskip 
	\noindent
	\textit{Proof of} $	( \bfm^{\eps,\delta}, \bfJ^{\eps,\delta}  ) \in \cCE_\eps$. \ 
	Let us fix~$x \in \X_\eps$ and~$\varphi \in C^1_c \bigl((0,1)\bigr)$. Set $\tilde \varphi : = \varphi \circ r_\delta$. We have
	\begin{align} \label{eq:CE_proof}
		\nonumber
		\int_0^1 \partial_t \varphi m_t^{\eps,\delta}(x) \dd t
		&=
		\int_0^\delta \partial_t \varphi \, m_0^{\eps}(x) \dd t
		+
		\int_{1-\delta}^1 \partial_t \varphi \, m_1^\eps(x) \dd t
		+
		\int_{\cI_\delta} \partial_t \varphi \, m_{r_\delta^{-1}(t)}^\eps(x) \dd t \\
		\nonumber
		&=
		\varphi(\delta) \, m_0^\eps(x) - \varphi(1-\delta) \, m_1^\eps(x) + (1-2\delta)\int_0^1 (\partial_t \varphi) \circ r_\delta \, m_s^\eps(x) \dd s \\
		\nonumber
		&=
		\tilde \varphi(0) \, m_0^\eps(x) - \tilde \varphi(1) \, m_1^\eps(x) + \int_0^1 \partial_s \tilde \varphi \, m_s^\eps(x) \dd s \\
		\nonumber
		&=
		\int_0^1 \tilde \varphi \, \sum_{y \sim x} J_s^\eps(x,y) \dd s
		=
		\frac{1}{1-2\delta} \int_{\cI_\delta} \varphi \, \sum_{y \sim x} J^\eps_{r_\delta^{-1}(t)}(x,y) \dd t \\
		&=
		\int_0^1 \varphi \sum_{y \sim x} J^{\eps,\delta}_t(x,y) \dd t \, ,
	\end{align}
	where, in the fourth equality, we used that $(\bfm^\eps, \bfJ^\eps) \in \cCE_\eps$.
	
	\smallskip 
	\noindent
	\textit{Proof of the energy estimate}. \ 
	Note that, by construction, for $(t,(x,y)) \in \cI_\delta \times \cE_\eps$, 
	\begin{align}
		\label{eq:form_rescaled}
		m_t^{\eps,\delta}(x) =  m_{r_\delta^{-1}(t)}^\eps(x) 
		\, , \qquad 
		J_t^{\eps,\delta}(x,y) = \frac1{1-2\delta}   J_{ r_\delta^{-1}(t)}^\eps(x,y) 
		\, . 
	\end{align}
	On the other hand, for $(t,(x,y)) \in \big( (0,\delta] \cup [1-\delta,1) \big) \times \cE_\eps$, we have that 
	\begin{align*}
		m_t^{\eps,\delta}(x) =
		\begin{cases}
			m_0^\eps(x)
			&\text{if } t \in (0,\delta]  \\
			m_1^\eps(x)
			&\text{if } t \in [1-\delta,1) 
		\end{cases} 
		\tand 
		J_t^{\eps,\delta}(x,y) = 0
		\, .
	\end{align*}
	%\begin{align*}
	%\end{align*}
	It follows that the energy of $(\bfm^{\eps,\delta}, \bfJ^{\eps,\delta}) $ is given by 
	\begin{align}
		\label{eq:energy_rescaled}	
		\cA_\eps(\bfm^{\eps,\delta}, \bfJ^{\eps,\delta})  
		= 
		\int_0^1 
		\cF_\eps(m_t^{\eps,\delta}, J_t^{\eps,\delta})
		\dd t
		= 	\cA_\eps^{\cI_\delta} 	(\bfm^{\eps,\delta}, \bfJ^{\eps,\delta})
		+
		\delta \sum_{i=0,1}
		\cF_\eps(m_i^\eps,0)  
		\, , 
	\end{align}
	where we used the notation 
	%(recall \eqref{eq:form_rescaled})
	\begin{align}	\label{eq:energy_rescaled2}
		\cA_\eps^{\cI_\delta}(\bfm^{\eps,\delta}, \bfJ^{\eps,\delta}) 
		&:=
		\int_{\cI_\delta} 
		\cF_\eps(m_t^{\eps,\delta}, J_t^{\eps,\delta})
		\dd t
		=
		(1-2\delta)
		\int_0^1
		\cF_\eps\Big( m_t^\eps, \frac1{1-2\delta}J_t^\eps \Big)
		\dd t
		\, .
	\end{align} 
	Using Assumption~\ref{ass:F_lineargr}, we see that, for $i=0,1$, 
	\begin{align*}
		\cF_\eps(m_i^\eps,0) 
		\leq 
		C (m_i^\eps(\cX_\eps) +1) 
		=
		C \bigl( \iota_\eps \bfm^\eps \bigl((0,1)\times \Td\bigl) + 1\bigr) 
	\end{align*}
	and, by the Lipschitz continuity exhibited in Lemma~\ref{lemma:lip_disc}, we also infer that
	\begin{align*}
		\cA_\eps^{\cI_\delta}(\bfm^{\eps,\delta}, \bfJ^{\eps,\delta}) 
		&\leq 
		(1-2\delta) \left(\cA_\eps(\bfm^\eps, \bfJ^\eps)
		+
		C \Big(\frac1{1-2\delta}-1 \Big)
		\int_0^1  \| \iota_\eps J_t^\eps \|_{\TV} \dd t \right)
		\\
		&=
		(1-2\delta) \cA_\eps(\bfm^\eps, \bfJ^\eps)
		+
		2 \delta C |\iota_\eps \bfJ^\eps|\bigl((0,1)\times \Td\bigr) \, .
	\end{align*}
	Since we assumed~$F$ to be nonnegative, we can further estimate
	\[
	(1-2\delta) \cA_\eps(\bfm^\eps, \bfJ^\eps)
	\le
	\cA_\eps(\bfm^\eps, \bfJ^\eps)
	\]
	and, combining these estimates with~\eqref{eq:energy_rescaled}, we find
	\begin{align*}
		\cA_\eps(\bfm^{\eps,\delta}, \bfJ^{\eps,\delta})  
		\leq 
		\cA_\eps(\bfm^\eps, \bfJ^\eps)
		+ 2 \delta 
		\Big(
		\iota_\eps \bfm^\eps \bigl((0,1)\times \Td\bigr) + C |\iota_\eps \bfJ^\eps|\bigl((0,1)\times \Td\bigr)  + 1
		\Big)
		\, , 
	\end{align*}
	which concludes the proof of \eqref{eq:claim_final}.
\end{proof}

\subsubsection{Case 2: F is flow-based}
In this section we show \eqref{eq:claim_liminf} in the case $F$ (and hence $f_\hom$) is of flow-based type, i.e.~it satisfies Assumption~\ref{ass:F_fbased}. We start by observing that, in this special setting, both discrete and continuous formulation of the boundary-value problems admit an equivalent, static formulation. 

Let $(\bfmu,\bfnu) \in \CE$, and consider the Lebesgue decomposition 
\begin{align*}
	\bfmu = \rho \Leb^{d+1} + \rho^\perp \bfsigma \, , \qquad
	\bfnu = j \Leb^{d+1} + j^\perp \bfsigma \, .
\end{align*}
We know that every solution to the continuity equation can be disintegrated in the form $\bfmu(\dd t,\dd x) = \mu_t(\dd x) \dd t$ for some measurable curve $t \mapsto \mu_t \in \cM_+(\Td)$ of constant, finite mass.
If $f$ is a function as in Assumption~\ref{ass:f} that further  does \emph{not} depend on~$\rho$, then Jensen's inequality yields  
\begin{align}	\label{eq:JensenA}
	\int_0^1 \int_{\T^d} f(j_t) \dd x \dd t \ge \int_{\T^d} f\left(\int_0^1 j_t \dd t\right) \dd x \, .
\end{align}
In order to take care of the singular part, consider the disintegration of $\bfsigma$ with respect to the projection map $\pi:(t,x) \mapsto x$, in the form
\begin{align}	\label{eq:def_pi}
	\bfsigma(\dd t, \dd x) = \sigma^x(\dd t, \dd x') (\pi_{\#}\bfsigma)(\dd x) \, , 
\end{align}
for some measurable $x \mapsto \sigma^x \in \Prob\big( (0,1) \times \Td \big)$ so that $\sigma^x$ is concentrated on $(0,1) \times \{x\}$ for $\pi_{\#} \bfsigma$-a.e.~$x \in \T^d$. Due to the convexity of $f^\infty$, by Jensen's inequality we also obtain
\begin{align}	\label{eq:JensenA2}
	\int_{(0,1) \times \Td} f^\infty(j^\perp) \dd \bfsigma \ge \int_{\T^d} f^\infty\left(\int j^\perp \dd \sigma^x \right) \dd \pi_{\#} \bfsigma (x) \, .	
\end{align}
Now, we define the new space-time measures 
\begin{align}	\label{eq:affineA}
	\begin{split}
		\tilde \bfmu := \tilde \mu_t(\dd x) \dd t 
		\quad \text{and} \quad 
		\tilde \bfnu := \hat j \Leb^{d+1} + \hat j^\perp \dd t \otimes \pi_{\#}\bfsigma \, , 
		\quad \text{where} 
		\\ 
		\tilde \mu_t:= \mu_0 + t (\mu_1 - \mu_0)
		\, , \quad 
		\hat j(x) := \int_0^1 j_t(x) \dd t   
		\, , \quad \text{and} \ \  
		\hat j^\perp(x) := \int j^\perp \dd \sigma^x
		\, .
	\end{split}
\end{align}
By \eqref{eq:JensenA} and \eqref{eq:JensenA2}, we therefore have
\begin{align} \label{eq:flowbased1}
	\bA(\bfmu, \bfnu) \geq 
	\int_{\T^d} f(\hat j) \dd x
		+
	\int_{\T^d} f^\infty(\hat j^\perp) \dd \pi_{\#}\bfsigma(x)
		\, .
\end{align}
We need to be careful here: the decomposition of~$\tilde \bfnu$ in~\eqref{eq:affineA} may not a the Lebesgue decomposition, in the sense that~$\dd t \otimes \pi_\# \bfsigma$ can have a nonzero absolutely continuous part. Let~$\tilde \sigma \in \cM_+(\Td)$ be singular w.r.t.~$\Leb^d$ and such that~$\mu_0,\mu_1, \pi_\# \bfsigma \ll \Leb^d + \tilde \sigma$. We can write Lebesgue decompositions
\[
\tilde \bfmu
=
\tilde \rho \Leb^{d+1} + \tilde \rho^\perp \dd t \otimes \tilde \sigma \, ,
\qquad
\tilde \bfnu = \tilde j \Leb^{d+1} + \tilde j^\perp \dd t \otimes \tilde \sigma \, .
\]
If we write
\[ \pi_\# \bfsigma = \alpha \Leb^d + \beta \tilde \sigma \]
for some functions~$\alpha, \beta : \Td \to \R_+$, then
\[
\tilde j
=
\hat j + \alpha \hat j^\perp
\quad \text{and} \quad
\tilde j^\perp
=
\beta \hat j^\perp \, .
\]
The inequality \eqref{eq:flowbased1} becomes, recalling that $f^\infty$ is $1$-homogeneous,
\begin{equation} \label{eq:flowbased2}
	\bA(\bfmu, \bfnu)
	\geq
	\int_{\Td} \bigl( f(\hat j) + f^\infty(\alpha \hat j^\perp) \bigr) \dd x + \int_{\Td} f^\infty (\beta \hat j^\perp) \dd \tilde \sigma \, .
\end{equation}
At this point, we need a lemma.
\begin{lemma}
	For every~$j_1,j_2 \in \R^d$, we have that 
	$
		f(j_1+j_2) \le f(j_1) + f^\infty(j_2)
	$.
\end{lemma}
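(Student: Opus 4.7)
The plan is to derive the inequality from a straightforward convexity argument together with the characterisation of $f^\infty$ as a limit along any base point in~$\Dom(f)$.

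First, I would reduce to the nontrivial case~$j_1 \in \Dom(f)$; otherwise $f(j_1) = +\infty$ and the inequality holds trivially. For $j_1 \in \Dom(f)$ and any $t > 1$, I would write the convex combination
\begin{equation*}
j_1 + j_2 = \frac{t-1}{t}\, j_1 + \frac{1}{t}\,(j_1 + t j_2)
\end{equation*}
and apply convexity of $f$ to obtain
\begin{equation*}
f(j_1 + j_2) \leq \frac{t-1}{t}\, f(j_1) + \frac{1}{t}\, f(j_1 + t j_2).
\end{equation*}

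The second step is to pass to the limit $t \to \infty$. The first term on the right tends to $f(j_1)$, while the second term tends to $f^\infty(j_2)$ by the very definition of the recession function~\eqref{eq:def_recess}, where I use $j_0 = j_1 \in \Dom(f)$ as the admissible base point (the key property being that the limit defining $f^\infty$ is independent of the chosen base point in~$\Dom(f)$, a standard fact for convex lower semicontinuous functions). This yields the desired inequality $f(j_1+j_2) \leq f(j_1) + f^\infty(j_2)$.

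I do not anticipate any substantial obstacle: the only mildly delicate point is invoking the base-point independence in the definition of~$f^\infty$, and the potential need to interpret $+\infty$ on both sides consistently when $j_1 + tj_2 \notin \Dom(f)$ for large~$t$, but in that case the right-hand side is $+\infty$ and the inequality holds trivially.
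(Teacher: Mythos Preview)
Your proof is correct and in fact cleaner than the paper's. The difference lies in the choice of convex decomposition. The paper writes
\[
j_1+j_2 = (1-\epsilon)\,\frac{j_1}{1-\epsilon} + \epsilon\,\frac{j_2}{\epsilon}
\]
and applies convexity of an auxiliary Lipschitz minorant~$g\le f$; because the first argument becomes~$j_1/(1-\epsilon)$ rather than~$j_1$, an extra Lipschitz correction is needed, followed by an approximation argument to pass from~$g$ back to~$f$. Your decomposition
\[
j_1+j_2 = \tfrac{t-1}{t}\,j_1 + \tfrac{1}{t}\,(j_1+tj_2)
\]
keeps~$j_1$ fixed and places the large parameter entirely in the second argument, which is exactly the form appearing in the definition of~$f^\infty$ with base point~$j_1\in\Dom(f)$. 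This avoids the Lipschitz minorant and the approximation step altogether. The only ingredient you invoke beyond convexity is the base-point independence of the recession limit, which the paper itself states right after~\eqref{eq:def_recess}, so your argument is fully consistent with the paper's setup and strictly more elementary.
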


\begin{proof}
	Let~$g \le f$ be a convex and Lipschitz continuous function. By convexity, for every~$\epsilon \in (0,1)$, we have  
	\[
	g(j_1+j_2)
	=
	g\left((1-\epsilon)\frac{j_1}{1-\epsilon} + \epsilon \frac{j_2}{\epsilon}\right)
	\le
	(1-\epsilon) g\left( \frac{j_1}{1-\epsilon} \right) + \epsilon g\left(\frac{j_2}{\epsilon} \right) \, .
	\]
	Let~$j_0 \in \Dom(f)$. By the Lipschitz continuity of $g$,
	\[
	g(j_1+j_2)
	\le
	(1-\epsilon) \left( g(j_1) + (\Lip g) \left( \frac{1}{1-\epsilon} - 1 \right) \abs{j_1} \right) + \epsilon g\left( \frac{j_2}{\epsilon} + j_0 \right) + \epsilon (\Lip g) \abs{j_0}
	\]
	and, since~$g \le f$,
	\[
	g(j_1 + j_2)
	\le
	(1-\epsilon) f(j_1)  + \epsilon f \left( \frac{j_2}{\epsilon} + j_0 \right)+ \epsilon (\Lip g) \bigl( \abs{j_0}+\abs{j_1} \bigr) \, .
	\]
	As we let~$\epsilon \to 0$, we find
	\[
	g(j_1 + j_2)
	\le
	f(j_1) + f^\infty(j_2) \, .
	\]
	Since~$f$ is convex and lower semicontinuous, we conclude by an approximation argument.
\end{proof}
Applying this lemma with~$j_1 = \hat j(x)$ and~$j_2 = \alpha \hat j^\perp(x)$ for every~$x \in \Td$,~\eqref{eq:flowbased2} finally becomes
\[
\bA(\bfmu, \bfnu)
\ge
\int_{\Td} f(\tilde j) \dd x + \int_{\Td} f^\infty(\tilde j^\perp) \dd \tilde \sigma
=
\bA(\tilde \bfmu, \tilde \bfnu) \, .
\]
In other words, we have shown that an optimal curve $\bfmu$ between two given boundary data is always given by the affine interpolation (and a constant-in-time flux $\hat j$). We conclude that
\begin{gather}
	\label{eq:static_contA}
	\bMA(\mu_0,\mu_1) = \bA(\boldsymbol {\tilde \mu})
	\\
	\nonumber
	= \inf_\nu
	\left\{
	\int_{\Td} f(\tilde j) \dd x 
	+ 
	\int_{\Td} f^\infty(\tilde j^\perp) \dd \tilde \sigma 
	\suchthat 
	\nu = \tilde j \Leb^d + \tilde j^\perp \tilde \sigma \, , \ \Leb^d \perp \tilde \sigma 
	\ \ \text{and} \ \   
	\nabla \cdot \nu = \mu_0 - \mu_1 
	\right\} 
	\, . 
\end{gather}
We refer to the latter expression as the \textit{static formulation} of the boundary value problem described by $\bMA(\mu_0,\mu_1)$ (in the case when $f$ is of flow-based type).
\begin{remark}
	\label{rem:flow-based} 
	Using such equivalence, the lower semicontinuity of $\bMA$ directly follows from Assumption~\ref{ass:f} (which provides weak compactness for the fluxes), the fact that the constraint in \eqref{eq:static_contA} is closed in $\mu_0, \mu_1, \nu$ w.r.t.~the weak topology, and the semicontinuity of the map
	\[
	\nu \mapsto \int_{\Td} f(\tilde j) \dd x 
	+ 
	\int_{\Td} f^\infty(\tilde j^\perp) \dd \tilde \sigma 
	\]
	ensured by \cite[Theorem 2.34]{Ambrosio-Fusco-Pallara:2000} (see also \cite[Lemma 3.14]{Gladbach-Kopfer-Maas-Portinale:2023}).
\end{remark} 
Arguing in a similar way (in fact, via an even simpler argument, due to the lack of singularities), we obtain a static formulation of the discrete transport problem in terms of a discrete divergence equation, when $F(m,J)=F(J)$. Precisely, in this case we obtain 
\begin{align}
	\label{eq:static_disc}
	\cMA_\eps(m_0,m_1)  
	= \inf 
	\left\{
	\cF_\eps(J)  
	\suchthat 
	J \in \R_a^{\cE_\eps} 
	\, , \quad    
	\dive J = m_0 - m_1 
	\right\} 
	\, . 
\end{align}
The sought $\Gamma$-liminf inequality easily follows from such static formulations, in particular \eqref{eq:static_contA}.

\begin{proof}[Proof of the liminf inequality (flow-based type)]
	Let $m_0^\eps$, $m_1^\eps \in \R_+^{\cX_\eps}$ be a sequence of discrete nonnegative measures which converge weakly (via $\iota_\eps$ in the usual sense) to $\mu_0$, $\mu_1$, and such that~$m_0^\eps(\cX_\eps) = m_1^\eps(\cX_\eps)$ for every~$\eps>0$. Let $(\bfm^\eps, \bfJ^\eps) \in \cCE_\eps$ the (almost-)optimal solutions associated with $\cMA_\eps(m_0^\eps,m_1^\eps)$, namely 
	\begin{align}
		\label{eq:estimate_static} 
		\liminf_{\eps \to 0}
		\cMA_\eps(m_0^\eps, m_1^\eps)
		= 
		\liminf_{\eps \to 0}
		\cA_\eps(\bfm^\eps)
		=
		\liminf_{\eps \to 0}
		\cA_\eps(\bfm^\eps, \bfJ^\eps)
		\, .
	\end{align}
	Consider the discrete equivalent of the measure constructed in \eqref{eq:affineA},  namely
	\begin{align*}
		\tilde m_t^\eps := m_0^\eps + t (m_1^\eps - m_0^\eps) 
		\tand 
		\tilde J^\eps_t \equiv  \tilde J^\eps:= \int_0^1 J_s^\eps \dd s
		\, , 
	\end{align*}
	which still solves the continuity equation.
	By applying Jensen's inequality, the convexity of $F$ ensures that $(\tilde \bfm^\eps , \tilde \bfJ^\eps )$ has a lower energy, i.e. 
	\begin{align}
		\label{eq:estimate_static_2}
		\cA_\eps(\bfm^\eps , \bfJ^\eps)
		\geq 
		\cA_\eps(\tilde \bfm^\eps , \tilde \bfJ^\eps)
		=
		\cF_\eps(\tilde J^\eps)
		\tand 
		(\tilde \bfm^\eps, \tilde \bfJ^\eps) \in \cCE_\eps
		\, .
	\end{align}
	Without loss of generality, we assume that $\sup_\eps \cF_\eps(\tilde J^\eps) < \infty$, which, thanks to Assumption~\ref{ass:F} (see e.g.~\cite[Equation~6.2]{Gladbach-Kopfer-Maas-Portinale:2023} for a similar argument),
	implies that
	\begin{align*}
		\sup_{\eps > 0} \| \iota_\eps \tilde J^\eps \|_{\TV(\Td)} < \infty
		\, .
	\end{align*}
	Therefore, up to a (non-relabeled) subsequence, we have $\tilde J^\eps \to \tilde \nu \in \cM^d\big(\T^d\big)$ weakly as $\eps \to 0$. In particular, we have that, by construction, 
	\begin{align*}
		(\tilde \bfm^\eps, \tilde \bfJ^\eps) \to (\tilde \bfmu, \tilde \bfnu) \in \CE 
	\end{align*}
	weakly, where the limit measures are given by 
	\begin{align*}
		\tilde \bfmu := \tilde \mu_t(\dd x) \dd t 
		\quad \text{with} \quad 
		\tilde \mu_t:= \mu_0 + t (\mu_1 - \mu_0)
		\, ,  \tand  
		\tilde \bfnu := \tilde \nu(\dd x) \dd t 
		\, .
	\end{align*}
	Further, we have that $\nabla \cdot \tilde \nu = \mu_0 - \mu_1$. All in all, from \eqref{eq:estimate_static}, \eqref{eq:estimate_static_2}, and the $\Gamma$-convergence of $\cA_\eps$ to $\bA_\hom$ (Remark~\ref{rem:gammaconv}), we infer that 
	\begin{align*}
		\liminf_{\eps \to 0}
		\cMA_\eps(m_0^\eps, m_1^\eps)
		\geq 
		\bA_\hom(\tilde\bfmu, \tilde \bfnu)	
		\geq 
		\bMA_\hom( \mu_0 , \mu_1 )
		\, , 
	\end{align*}
	which concludes the proof of the liminf inequality.
\end{proof}

\subsection{About the lower semicontinuity of $\bMA$}
\label{sec:lsc}
In view of our main result, whenever $F$ satisfies either Assumption~\ref{ass:F_lineargr} or Assumption~\ref{ass:F_fbased}, the limit boundary-value problem $\bMA_\hom(\cdot, \cdot)$ is necessarily jointly lower semicontinuous with respect to the weak topology on $\cM_+(\Td) \times \cM_+(\Td)$. This indeed follows from the general fact that any $\Gamma$-limit with respect to a given topology is always lower semicontinuous with respect to that same topology. Using a very similar proof to that of the $\Gamma$-liminf inequality, we can actually show that, if $f$ is with linear growth or it is of flow-based type, then the associated $\bMA$ is always lower semicontinuous (even if, a priori, $f$ is not of the form $f=f_\hom$), thus providing a positive answer in this framework to the validity of \eqref{eq:liminf_boundary}.
In the flow-based setting, this fact has been  observed in Remark~\ref{rem:flow-based}.

Assume now that $f$ is with linear growth, namely it satisfies one of the two equivalent conditions appearing in Lemma~\ref{lemma:lip}. We consider $(\mu_0^n$, $\mu_1^n) \to (\mu_0$, $\mu_1) \in \cM_+(\Td) \times \cM_+(\Td)$ weakly and claim that 
\begin{align*}
	\liminf_{n \to \infty}
	\bMA(\mu_0^n, \mu_1^n) \geq \bMA(\mu_0,\mu_1)
	\, .
\end{align*}
The proof goes along the same line of the proof of the $\Gamma$-liminf inequality for discrete energies $F$ with linear growth. We sketch it here and add details whenever we encounter nontrivial differences between the two proofs. 

\begin{proof}
	Let $(\bfmu_n,\bfnu_n) \in \CE$ be (almost-)optimal solutions associated to $\bMA(\mu_0^n,\mu_1^n)$, i.e.,
	\begin{align}	\label{eq:optimal_curve_lsc}
		\liminf_{n \to \infty}
		\bMA(\mu_0^n,\mu_1^n) 
		=
		\liminf_{n \to \infty}
		\bA(\bfmu^n,\bfnu^n) \, .
	\end{align}
	With no loss of generality we can assume $\sup_n \bA(\bfmu^n, \bfnu^n) <\infty$ and that the limits inferior are true limits. Hence, by compactness (Remark~\ref{rem:compactness}), we know that up to a non-relabeled subsequence, $\bfmu^n \to \bfmu$ weakly in $\cM_+\bigl( (0,1) \times \Td \bigr)$, as well as $\bfnu^n \to \bfnu$ weakly in $\cM^d\bigl((0,1) \times \Td \bigr)$, with $(\bfmu, \bfnu) \in \CE$. 
	Moreover, we also have $\dd \bfmu(t,x) = \mu_t(\dd x) \dd t \in \BVKR$ for some measurable curve $t \mapsto \mu_t \in \cM_+(\Td)$ of constant, finite mass.
	Once again, due to the lack of continuity of the trace operators in $\BV$, we cannot ensure that $\bfmu_{t=0} = \mu_0$ and $\bfmu_{t=1} = \mu_1$. To solve this issue, we rescale our measures $\bfmu^n$ in time in the same spirit as in \eqref{eq:rescale_disc}. For a given $\delta>0$,  we define $\cI_\delta:= (\delta,1-\delta)$ and $\bfmu^{n,\delta} \in \BVKR$ as 
	\begin{align}
		\mu_t^{n,\delta} := 
		\begin{cases}
			\mu_0^n	
			&\text{if } t \in (0,\delta] \\
			\mu_{\frac{t-\delta}{1-2\delta}}^n
			&\text{if } t \in \cI_\delta \\
			\mu_1^n
			&\text{if } t \in [1-\delta,1)
		\end{cases} 
		\, , \quad 
		\dd \bfmu^{n,\delta}(t,x) := \mu_t^{n,\delta}(\dd x) \dd t 
		\, .
	\end{align}
	By construction, it is not hard to see that $\bfmu^{n,\delta} \to \hat \bfmu^\delta$ weakly, where 
	\begin{align}
		\hat \mu_t^\delta := 
		\begin{cases}
			\mu_0
			&\text{if } t \in (0,\delta] \\
			\mu_{\frac{t-\delta}{1-2\delta}}
			&\text{if } t \in \cI_\delta \\
			\mu_1
			&\text{if } t \in [1-\delta,1)
		\end{cases} 
		\, , \quad 
		\dd \hat \bfmu^\delta(t,x) := \hat \mu_t^\delta(\dd x) \dd t 
		\, .
	\end{align}
	We stress that, as in \eqref{eq:def_muhat}, the rescaled curve $t \mapsto \hat \mu_t^\delta$ could have discontinuities at $t=\delta$ and $t= 1-\delta$, corresponding to the possible jumps in the limit as $n \to \infty$ for $\bfmu^n$ at $\{0,1\}$. Nevertheless, $\hat \bfmu^\delta$ is a competitor for $\bMA(\mu_0,\mu_1)$, which, by lower semicontinuity of $\bA$, ensures that 
	\begin{align}	\label{eq:first_est_lsc_lsc}
		\liminf_{n \to \infty}
		\bA(\bfmu^{n,\delta}) \geq \bA(\hat \bfmu^\delta) \geq \bMA(\mu_0,\mu_1)
		\, .
	\end{align}
	In order to estimate the left-hand side of the latter displayed equation, we seek a suitable vector meausure $\bfnu^{n,\delta}$ so that $(\bfmu^{n,\delta}, \bfnu^{n,\delta}) \in \CE$ and whose energy $\bA(\bfmu^{n,\delta},\bfnu^{n,\delta})$ is comparable with $\bA(\bfmu^n,\bfnu^n)$ for small $\delta>0$. 
	
	Recall the definitions of $r_\delta$, $R_\delta$, their properties from Lemma~\ref{lemma:change_var}, and the definition of $\hat \iota_\delta$ as in \eqref{eq:def_hat_iota}.
	We set
	\begin{align}
		\nu^{n,\delta} := \hat \iota_\delta  \big[ (R_\delta)_{\#} \bfnu^n \big] \in  \cM^d((0,1)\times \Td)
		\, . 
	\end{align}
	The proof that $( \bfmu^{n,\delta}, \bfnu^{n,\delta}  ) \in \CE$ works exactly as in  \eqref{eq:CE_proof}. In the same spirit as in \eqref{eq:claim_final}, we claim that
	\begin{align}	\label{eq:claim_final_lsc}
		\bA(\bfmu^{n,\delta}, \bfnu^{n,\delta}) 
		\leq
		\bA(\bfmu^n, \bfnu^n) 
		+ C(f) \delta 
		\Big(
		\bfmu^n ((0,1) \times \Td)  + |\bfnu^n|((0,1) \times \Td) + 1
		\Big)
		\, ,  
	\end{align}
	where $C(f) \in \R_+$ only depends on $f$ (specifically on $\Lip f$). The combination of \eqref{eq:optimal_curve_lsc}, \eqref{eq:first_est_lsc_lsc} and \eqref{eq:claim_final_lsc}, and the arbitrariness of~$\delta$ would then suffice to conclude the proof.
	
	We are left with the proof of the claim \eqref{eq:claim_final_lsc}, which is a bit more involved, compared to that of~\eqref{eq:claim_final}, due to the presence of the singular part at the continuous level.
	We apply Lemma~\ref{lemma:change_var} to both $\bfmu^n$ and $\bfnu^n$ and find that
	\begin{gather*}
		\bfmu^{n,\delta} = \rho^{n,\delta} \dd \Leb^{d+1} + \rho^{n, \delta, \perp} \dd (R_\delta)_{\#} \bfsigma
		\tand 
		%\\
		\bfnu^{n,\delta} = j^{n,\delta} \dd \Leb^{d+1} + j^{n, \delta, \perp} \dd (R_\delta)_{\#} \bfsigma  
		\, , 
	\end{gather*}
	where $(R_\delta)_{\#} \bfsigma$ is singular with respect to $\Leb^{d+1}$ and, for $(t,x) \in \cI_\delta \times \Td$,  
	\begin{align}
	\label{eq:densitiesPF}
		\rho^{n,\delta}(t,x) &=  \big( \rho^n \circ R_\delta^{-1}  \big) (t,x) 
		\, , \qquad 
		&&\rho^{n, \delta, \perp}(t,x) = (1-2\delta) \big( \rho^{n,\perp} \circ R_\delta^{-1} \big) (t,x)
		\, , \nonumber
		\\
		j^{n,\delta}(t,x) &= \frac1{1-2\delta}  \big( j^n \circ R_\delta^{-1}  \big) (t,x) 
		\, , \qquad 
		&&j^{n, \delta, \perp}(t,x) =  \big( j^{n,\perp} \circ R_\delta^{-1}  \big)
		\, . 
	\end{align}
	On the other hand, for $(t,x) \in \big( (0,\delta] \cup [1-\delta,1) \big) \times \Td$, we have that 
	\begin{align*}
		\rho^{n,\delta}(t,x) =
		\begin{cases}
			\rho_0^n(x)
			&\text{if } t \in (0,\delta) \\
			\rho_1^n(x)
			&\text{if } t \in [1-\delta,1) 
		\end{cases} \, ,
		\qquad 
		\rho^{n, \delta, \perp}(t,x)= 
		\begin{cases}
			\rho_0^{n,\perp}(x)
			&\text{if } t \in (0,\delta]  \\
			\rho_1^{n,\perp}(x)
			&\text{if } t \in [1-\delta,1) 
		\end{cases} \, ,
	\end{align*}
	and
	$
	j^{n,\delta}(t,x) = j^{n, \delta, \perp}(t,x) = 0
	$
	. 
	It follows that the energy of $(\bfmu^{n,\delta}, \bfnu^{n,\delta}) $ is given by\footnote{Note that the definition of the energy does not depend on the choice of the measure which is singular with respect to $\Leb^{d+1}$, therefore we can use $(R_\delta)_{\#} \bfsigma$ instead of $\bfsigma$.}
	\begin{align}\nonumber	
		\bA(\bfmu^{n,\delta}, \bfnu^{n,\delta})  
		&= 
		\int_{(0,1) \times \Td} 
		f(\rho^{n,\delta}, j^{n,\delta})
		\dd \Leb^{d+1}
		+\int_{ (0,1) \times \Td} 
		f^\infty( \rho^{n,\delta,\perp}, j^{n,\delta,\perp} )
		\dd (R_\delta)_{\#} \bfsigma
		\\ \label{eq:energy_rescaled_lsc}
		&=
		\bA^{\cI_\delta} 	(\bfmu^{n,\delta}, \bfnu^{n,\delta})
		+ 
		\sum_{i=0,1}
		\delta
		\Big( 
		\int_{\Td} f(\rho_i^n,0) \dd \Leb^d  
		+
		\int_{\Td} f^\infty(\rho_i^{n,\perp},0) \dd \pi_{\#} \bfsigma   
		\Big) 
		\, , \qquad 
	\end{align}
	where $\pi$ is defined as in \eqref{eq:def_pi} and we used the notation 
	\[
		\bA^{\cI_\delta}(\bfmu^{n,\delta}, \bfnu^{n,\delta}) 
		:=
		\int_{\cI_\delta \times \Td} 
		f(\rho^{n,\delta}, j^{n,\delta})
		\dd \Leb^{d+1}
		+
		\int_{ \cI_\delta \times \Td} 
		f^\infty( \rho^{n,\delta,\perp}, j^{n,\delta,\perp} )
		\dd (R_\delta)_{\#} \bfsigma \, .
	\]
	Making use of the formulas~\eqref{eq:densitiesPF} and the homogeneity of $f^\infty$, we find
	\begin{align}	\label{eq:energy_rescaled2_lsc}
		\bA^{\cI_\delta}&(\bfmu^{n,\delta}, \bfnu^{n,\delta})  
		=
		(1-2\delta)
		\int_{(0,1) \times \Td} 
		f\Big( \rho^n, \frac{j^n}{1-2\delta} \Big)
		\dd \Leb^{d+1} 
		+ \int_{(0,1) \times \Td} 
		f^\infty\big( (1-2\delta) \rho^{n,\perp}, j^{n,\perp} \big) \dd \bfsigma \nonumber
		%	\, .
		\\ 
		&\qquad =
		(1-2\delta)
		\bigg( 
		\int_{(0,1) \times \Td} 
		f\Big( \rho^n, \frac{j^n}{1-2\delta} \Big)
		\dd \Leb^{d+1} 
		+ \int_{(0,1) \times \Td} 
		f^\infty\Big( \rho^{n,\perp}, \frac{j^{n,\perp}}{1-2\delta} \Big) \dd \bfsigma 
		\bigg)
		\, .
	\end{align}
	Furthermore, it follows from the linear growth assumption that, for $i=0,1$, 
	\begin{align*}
		\int_{\Td} f(\rho_n^i,0) \dd \Leb^d 
		+ 
		\int_{\Td} f^\infty(\rho_i^{n,\perp},0) \dd \pi_{\#} \bfsigma 
		\leq 
		C (\mu_n^i(\Td) +1) 
		=
		C ( \bfmu^n ((0,1) \times \Td) + 1) 
	\end{align*}
	as well as,  from \eqref{eq:energy_rescaled2_lsc}  and by the nonnegativity of $f$,
	\begin{align*}
		\bA^{\cI_\delta}(\bfmu^{n,\delta}, \bfnu^{n,\delta}) 
		&\leq 
		\bA(\bfmu^n, \bfnu^n)
		+
		\Big( \frac1{1-2\delta} - 1 \Big) (\Lip f )
		\bigg(
		\int_{(0,1) \times \Td} \| j^n \| \dd \Leb^{d+1}
		+ 
		\int_{(0,1) \times \Td} \| j^{n,\perp} \| \dd \bfsigma 	 
		\bigg)
		\\
		&\leq
		\bA(\bfmu^n, \bfnu^n)
		+
		2 \delta (\Lip f) |\bfnu^n|((0,1) \times \Td) \, .
	\end{align*}
	Recalling \eqref{eq:energy_rescaled_lsc}, we conclude that 
	\begin{align*}
		\bA(\bfmu^{n,\delta}, \bfnu^{n,\delta})  
		\leq 
		\bA(\bfmu^n, \bfnu^n)
		+ 2 \delta 
		\Big(
		\bfmu^n ((0,1) \times \Td) + (\Lip f) |\bfnu^n|((0,1) \times \Td)  + 1
		\Big)
		\, , 
	\end{align*}
	and thus \eqref{eq:claim_final_lsc}.
\end{proof}

\section{Analysis of the cell problem with examples}
\label{sec:analsys_cellprob}
This section is devoted to the characterisation and illustration of~$f_\hom$ in the case where the function~$F$ is of the form
\begin{align}
\label{eq:F_example}
F(m,J)
=
F(J)
=
\sum_{ (x, y) \in \EQ } \alpha_{xy} \abs{J(x,y)}
\end{align}
for some strictly positive function~$\alpha : \EQ \ni (x,y) \mapsto \alpha_{xy}>0$. A natural problem of interest is to determine whether/when the~$\Gamma$-limit~$\bMA_\hom$ can be the~$W_1$-distance. The analogous problem for the~$W_2$-distance has been extensively studied in~\cite{GlKoMa18} and~\cite{Gladbach-Kopfer-Maas-Portinale:2023} in the case where the graph stucture is associated with finite-volume partitions.

\subsection{Discrete $1$-Wasserstein distance}

We start the analysis of this special setting by observing that, in this case, the discrete functional $\cMA_\eps$ actually coincides with the $\bW_1$ distance associated to a natural induced metric structure. In order to prove this fact, we first define~$\tilde \alpha^\eps : \cE_\eps \to \R_+$ as the unique function such that
\[
	\frac{\tau_\eps^z \tilde \alpha^\eps}{\eps} \Big |_{\EQ} \coloneqq \alpha \, \qquad z \in \Z_\eps^d \, .
\]
It is easy to check that this definition is well-posed and determines the value of~$\tilde \alpha_{xy}$ for every~$(x,y) \in \cE_\eps$. Further let
\[
	\alpha_{xy}^\eps
		=
	\frac{\tilde \alpha_{xy}^\eps + \tilde \alpha_{yx}^\eps}{2} \, ,  \qquad (x,y) \in \cE_\eps
\]
be the symmetrisation of~$\tilde \alpha^\eps$. Given~$J \in \R^{\cE_\eps}_a$, we can write~$\cF_\eps(J)$ in terms of~$\alpha^\eps$. Precisely,
\begin{align*}
	\cF_\eps(J)
		&=
	\sum_{z \in \Z^d_\eps} \eps^d F \left( \frac{\tau_\eps^z J }{\eps^{d-1}} \right)
		=
	\sum_{z \in \Z^d_\eps} \eps^d \sum_{(\hat x, \hat y) \in \EQ } \alpha_{\hat x \hat y} \frac{ \abs{\tau^z_\eps J(\hat x, \hat y)}} {\eps^{d-1}} \\
		&=
	\sum_{z \in \Z^d_\eps} \sum_{(\hat x, \hat y) \in \EQ} \tau^z_\eps \tilde \alpha^\eps(\hat x, \hat y) \abs{\tau_\eps^z J(\hat x, \hat y)} 
		=
	\sum_{(x,y) \in \cE_\eps} \tilde \alpha^\eps_{xy} \abs{J(x, y)} \\
		&=
	\sum_{(x,y) \in \cE_\eps} \alpha^\eps_{xy} \abs{J(x, y)} \, ,
\end{align*}
where in the last passage we used that~$\abs{J}$ is symmetric.
We define a distance on $\cX_\eps$ given by 
\begin{align}
	d_\eps(x,y):= \cMA_\eps(\delta_x, \delta_y)
		\, , \qquad 
	\forall x,y \in \cX_\eps \, .
\end{align}
One can easily show that $d_\eps$ indeed defines a metric on $\cX_\eps$. In fact,~$d_\eps$ can be seen as a weighted graph distance, in the sense that
\[
	d_\eps(x,y)
		=
	\inf \left\{	\sum_{i=0}^{k-1} 2 \alpha_{x_i x_{i+1}}^\eps
			\suchthat
		x_0 = x \, , \, x_k = y \, , \, (x_i,x_{i+1}) \in \cE_\eps \ \, \, \forall i \, , \, k \in \N
	\right\} \, .
\]

\begin{proof}
	The inequality~$\le$ directly follows by choosing unit fluxes along admissible paths: let~$x_0=x,x_1,\dots,x_{k-1},x_k=y$ be a path, i.e.,~$(x_i,x_{i+1}) \in \cE_\eps$ for every~$i = 0,1,\dots,k$, and consider
	\begin{equation} \label{eq:JP}
		J^P \coloneqq \sum_{i=0}^{k-1} \bigl(\delta_{(x_i,x_{i+1})}-\delta_{(x_{i+1},x_i)} \bigr) \, ,
	\end{equation}
	which has divergence equal to~$\delta_x-\delta_y$.
	Then,
	\begin{align*}
		d_\eps(x,y)
			&=
		\cMA_\eps(\delta_x, \delta_y)
			=
		\inf\left\{ \cF_\eps(J) \suchthat \dive J = \delta_x-\delta_y \right\} \\
			&=
		\inf\left\{ \sum_{(x,y) \in \cE_\eps} \alpha^\eps_{xy} \abs{J(x, y)} \suchthat \dive J = \delta_x-\delta_y \right\} \\
			&\le
		\sum_{(x,y) \in \cE_\eps} \alpha^\eps_{xy} \abs{J^P(x,y)}
			\le
		\sum_{i=0}^{k-1} 2 \alpha_{x_i x_{i+1}}^\eps \, ,
	\end{align*}
	where in the last inequality we used that~$\alpha^\eps$ is symmetric. 
	
	To prove the converse, let~$\bar J \in \R^{\cE_\eps}_a$ be an optimal flux for~$\cMA_\eps(\delta_x,\delta_y)$, that is,
	\begin{equation*}
		\dive \bar J = \delta_x-\delta_y \tand 
		\cMA_\eps(\delta_x,\delta_y) = \sum_{ (x, y) \in \cE_\eps } \alpha_{xy}^\eps \abs{\bar J(x,y)}
		\, .
	\end{equation*}
	Since the graph~$\cE_\eps$ is finite, in order for~$\bar J$ to satisfy the divergence condition, there must exist a simple path~$x_0 = x, x_1,\dots,x_k = y$ such that~$(x_i,x_{i+1}) \in \cE_\eps$ and~$\bar J(x_i,x_{i+1}) > 0$ for every~$i$. Let~$J^P$ be the associated vector field as in~\eqref{eq:JP}. Note that, for every~$\lambda \in \R$, we have~$\dive \bigl((1-\lambda) \bar J + \lambda J^P\bigr) = \delta_x - \delta_y$. Furthermore, the function
	\[
		\lambda
			\mapsto
		\sum_{(x,y) \in \cE_\eps} \alpha_{xy}^\eps \abs{(1-\lambda)\bar J(x,y) + \lambda J^P(x,y)}
	\]
	is differentiable at~$\lambda = 0$, since~$(\bar J(x,y) = 0) \Rightarrow (J^P(x,y) = 0)$. By optimality, the derivative at~$\lambda = 0$ must equal~$0$, i.e.,
	\[
		0= \sum_{ (x, y) \in \cE_\eps } \alpha_{xy}^\eps \bigl( J^P(x,y) - \bar J(x,y)	\bigr) \sgn \bar J(x,y) = \sum_{ (x, y) \in \cE_\eps } \alpha_{xy}^\eps J^P(x,y)  \sgn \bar J(x,y) - d_\eps(x,y) \, ,
	\]
	and, since~$(J^P(x,y) \neq 0) \Rightarrow (\sgn J^P(x,y) = \sgn \bar J(x,y))$, we have
	\[
		d_\eps(x,y) = \sum_{(x,y) \in \cE_\eps} \alpha_{xy}^\eps \abs{J^P(x,y)} = 2\sum_{i=1}^k \alpha_{x_i x_{i+1}}^\eps \, ,
	\]
	where, in the last equality, we used that the path is simple (and the symmetry of~$\alpha^\eps$). This shows the $\geq$ inequality and concludes the proof. 
%	[LORE}: che intendi con opposite?
%	no two edges in the path~$x_0,x_1,\dots,x_k$ are opposite, since the path is simple.
\end{proof}

Consider the $1$-Wasserstein distance associted to~$d_\eps$, that is,%which can equivalently recast as
\begin{align}
	\bW_{1,\eps}(m_0,m_1) = 
	\inf
	\left\{
		\int_{\cX_\eps \times \cX_\eps} d_\eps(x,y) \dd \pi(x,y) 
			\suchthat 
		(e_0)_{\#} \pi = m_0
			\, , \quad 
		(e_1)_{\#} \pi = m_1 
	\right\}
		\, , 
\end{align}
as well as, by Kantorovich duality,
\begin{align}
	\bW_{1,\eps}(m_0,m_1) =
	\sup
	\left\{
		\int_{\cX_\eps} \varphi \dd (m_0-m_1)
			\suchthat 
		\Lip_{d_\eps}(\varphi) \leq 1
	\right\}
		\, , 
\end{align} 
for every $m_0,m_1 \in \Prob(\cX_\eps)$. We claim that, in fact, 
\begin{align}
\label{eq:MA-W1}
	\cMA_\eps(m_0,m_1) = \bW_{1,\eps}(m_0,m_1)
		\, , \qquad  
	\forall m_0,m_1 \in \Prob(\cX_\eps)
		\, .
\end{align}

\begin{proof}[Proof of~$\ge$] Fix $m_0,m_1 \in \Prob(\cX_\eps)$ and set $m:= m_0-m_1$. Let $\bar J \in \R_a^{\cE_\eps}$ be an optimal flux for $\cMA_\eps(m_0,m_1)$, that it,
\begin{align}
	\dive \bar J = m \tand 
	\cMA_\eps(m_0,m_1) = \sum_{ (x, y) \in \cE_\eps } \alpha_{xy}^\eps \abs{\bar J(x,y)}
		\, .
\end{align}
Let $\varphi: \cX_\eps \to \R$ be such that $\Lip_{d_\eps} \varphi \leq 1$, i.e.,~$
	\left|
		\varphi(y) - \varphi(x)
	\right|
		\leq 
	d_\eps(x,y)
$ for 
$
x, y \in \cX_\eps
		\, . 
$
Then, 
\begin{align}
	\int_{\cX_\eps} \varphi \dd m 
		&= 
	\int_{\cX_\eps} \varphi \dd \dive \bar J 
		= 
	\sum_{x \in \cX_\eps} \varphi(x) \sum_{y \sim x} \bar J(x,y) 
		= 
	\frac12 
	\sum_{(x,y) \in \cE_\eps} \varphi(x) 
		\big( \bar J(x,y) - \bar J(y,x) \big)
\\
		&=
	\frac12 
	\sum_{(x,y) \in \cE_\eps} 
		\big( \varphi(y) - \varphi(x) \big) \bar J(x,y)
		=
	\sum_{(x,y) \in \cE_\eps \suchthat \bar J(x,y)>0}
		\big( \varphi(y) - \varphi(x) \big) \bar J(x,y)	 
\\
		&\leq 
	\sum_{(x,y) \in \cE_\eps \suchthat \bar J(x,y)>0}
		d_\eps(x,y) \bar J(x,y)
			\, .
\end{align}
In order to conclude, we make the following crucial observation: as a consequence of the optimality of $\bar J$, we claim that
\begin{equation} \label{eq:implic}
	\bar J(x,y) > 0  
		\quad \Longrightarrow \quad
	d_\eps(x,y) = 2 \alpha_{xy}^\eps
		\, . 
\end{equation}
To this end, assume that $\bar J(x,y)>0$ and consider an optimal $J^{(x,y)}$ for $d_\eps(x,y) = \cMA_\eps(\delta_x, \delta_y)$. Note that, by construction,
\begin{align}
	\dive
	\big(
	J^{(x,y)}
	\big) 
		=
	\delta_x - \delta_y 
		=
	\dive 
		\tilde J 
	\, , \qquad 
	\text{where} \ \tilde J := \delta_{(x,y)} - \delta_{(y,x)}
		\, ,
\end{align}
which in turns also implies that
\begin{align}
	\dive
	\big(
		\bar J + \bar J(x,y) 
		\big(
			J^{(x,y)} - \tilde J
		\big)
	\big)
		= 
	\dive \bar J
		\, .
\end{align}
By optimality of $J^{(x,y)}$, we have
\begin{equation} \label{eq:tildeJopt}
	F(\tilde J) = 2 \alpha_{xy}^\eps 
		\geq
	F \big( J^{(x,y)} \big)
		= 
	\sum_{(\tilde x , \tilde y) \in \cE_\eps}
		\alpha_{\tilde x, \tilde y}^\eps \abs{J^{(x,y)}(\tilde x, \tilde y)}
			\, , 
\end{equation}
whereas the optimality of $\bar J$ yields
\begin{align*}
	F\bigl(\bar J + \bar J(x,y) 
	\big(
	J^{(x,y)} - \tilde J
	\big)\bigr)
		&=
	\sum_{(\tilde x, \tilde y) \in \cE_\eps \setminus \{(x,y), (y,x)\} } \alpha_{\tilde x \tilde y}^\eps \abs{ \bar J(\tilde x, \tilde y) +
	\bar J(x,y) J^{(x,y)}(\tilde x, \tilde y)} \\
		&\quad+
	\alpha_{xy}^\eps \abs{\bar J(x,y) J^{(x,y)}(x,y)} + \alpha_{yx}^\eps  \abs{\bar J(y,x) J^{(x,y)}(y,x)} \\
		&\ge
	F(\bar J)
		=
	\sum_{(\tilde x, \tilde y) \in \cE_\eps} \alpha_{\tilde x\tilde y}^\eps \abs{\bar J(\tilde x, \tilde y)} \, .
\end{align*}
	By applying the triangle inequality and simplifying the latter formula, we find
	\begin{equation} \label{eq:barJopt}
		\sum_{(\tilde x,\tilde y) \in \cE_\eps} \alpha_{\tilde x\tilde y}^\eps \abs{\bar J(x,y) J^{(x,y)}(\tilde x,\tilde y)}
			\ge
		2\alpha_{xy}^\eps \abs{\bar J(x,y)} \, .
	\end{equation}
	The combination of~\eqref{eq:tildeJopt} and~\eqref{eq:barJopt} implies $d_\eps(x,y) = 2\alpha_{xy}^\eps$. With~\eqref{eq:implic} at hand, we can write
	\[
		\int_{\cX_\eps} \varphi \dd m \le \sum_{(x,y) \in \cE_\eps \suchthat \bar J(x,y)>0}
		2\alpha_{xy}^\eps \bar J(x,y) = \sum_{(x,y) \in \cE_\eps} \alpha_{xy}^\eps \abs{\bar J(x,y)} = \cMA_\eps(m_0,m_1) \, ,
	\]
	and we conclude by arbitrariness of~$\varphi$.
\end{proof}

\begin{proof}[Proof of~$\le$]
	Let~$\pi$ be such that~$(e_i)_{\#} \pi = m_i$ for~$i=0,1$. Further, for every~$(x,y) \in \cE_\eps$, let~$J^{(x,y)} \in \R^{\cE_\eps}_a$ be optimal for~$\cMA_\eps(\delta_x, \delta_y)$. It follows from a direct computation that the divergence of the asymmetric flux
	\[
		J
			\coloneqq
		\sum_{(x,y) \in \cE_\eps} \pi(x,y) J^{(x,y)}
	\]
	is equal to~$m_0 - m_1$. Thus,
	\[
		\cMA_\eps(m_0,m_1)
			\le
		\sum_{(\tilde x, \tilde y) \in \cE_\eps} \alpha_{\tilde x \tilde y}^\eps \abs{J(\tilde x, \tilde y)}
			\le
		\sum_{(x,y) \in \cE_\eps} \pi(x,y) \sum_{(\tilde x,\tilde y) \in \cE_\eps} \alpha_{\tilde x \tilde y}^\eps  \abs{J^{(x,y)}(\tilde x, \tilde y)}
			=
		\int_{\cX_\eps \times \cX_\eps} d_\eps \dd \pi \, ,
	\]
	and we conclude by arbitrariness of~$\pi$.
\end{proof}
In view of the equality~$\cMA_\eps = \bW_{1,\eps}$, it is worth noting that for energies of the form \eqref{eq:F_example} there are (at least) two different possible methods to show discrete-to-continuum limits for~$\cMA_\eps$. One such method is provided by the current work and makes use of the~$\Gamma$-convergence of~$\cA_\eps$ to~$\bA_\hom$ proved in~\cite[Theorem 5.4]{Gladbach-Kopfer-Maas-Portinale:2023}. The convergence of the ``weighted graph distance''~$d_\eps$ follows \emph{a posteriori}. Another approach is to study directly the scaling limits of the distance $d_\eps$ as $\eps \to 0$ and, from that, infer the convergence of the associated $1$-Wasserstein distances, in a similar spirit as in \cite{Buttazzo-DePascale-Fragal:2001}. 

\subsection{General properties of~$f_\hom$}

For~$j \in \R^d$, recall that
\begin{equation} \label{eq:fhomj}
	f_\hom(j)
	\coloneqq
	\inf \left\{ F(J) \suchthat J \in \Rep(j) \right\} ,
\end{equation}
where~$\Rep(j)$ is the set of all~$\Z^d$-periodic functions~$J \in \R^\cE_a$ such that
\[
\Eff(J) \coloneqq \frac{1}{2} \sum_{(x,y) \in \EQ} J(x,y)(y_\sz - x_\sz) = j
\quad \text{and} \quad
\dive J \equiv 0 \, . \]
As noted in~\cite[Lemma 4.7]{Gladbach-Kopfer-Maas-Portinale:2023}, we may as well write~$\min$ in place of~$\inf$ in~\eqref{eq:fhomj}.

Our first observation is that, indeed, the homogenised density is a norm. This has been already proved in \cite[Corollary~5.3]{Gladbach-Kopfer-Maas-Portinale:2023}; for the sake of completeness we provide here a simple proof in our setting.
\begin{proposition} \label{p:norm}
	The function~$f_\hom$ is a norm.
\end{proposition}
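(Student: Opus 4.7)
The plan is to verify the three defining properties of a norm -- positive homogeneity, subadditivity, and definiteness -- by exploiting the fact that $F(J) = \sum_{(x,y) \in \EQ} \alpha_{xy} |J(x,y)|$ is itself a norm on $\R_a^\EQ$ (since $\alpha_{xy}>0$), together with the fact that the constraint set $\Rep(j)$ behaves linearly in $j$.

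First, I would observe that the constraints defining $\Rep(j)$ are linear in $J$: both $\dive$ and $\Eff$ are linear operators, and periodicity is preserved under linear combinations. Consequently, for any $\lambda \in \R$ and $J \in \Rep(j)$, we have $\lambda J \in \Rep(\lambda j)$, and for $J_i \in \Rep(j_i)$ ($i=1,2$), we have $J_1 + J_2 \in \Rep(j_1+j_2)$. Since $F$ is absolutely $1$-homogeneous, i.e., $F(\lambda J) = |\lambda| F(J)$, taking the infimum yields $f_\hom(\lambda j) = |\lambda| f_\hom(j)$. For subadditivity, pick minimisers $J_i \in \Rep(j_i)$ achieving $f_\hom(j_i)$ (existence guaranteed by \cite[Lemma 4.7]{Gladbach-Kopfer-Maas-Portinale:2023}), then
\begin{equation*}
    f_\hom(j_1+j_2) \le F(J_1 + J_2) \le F(J_1) + F(J_2) = f_\hom(j_1) + f_\hom(j_2),
\end{equation*}
where the second inequality uses the triangle inequality for $F$.

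It remains to verify definiteness. Non-negativity $f_\hom(j) \ge 0$ is immediate from $F \ge 0$, and $f_\hom(0) = 0$ by plugging in $J \equiv 0 \in \Rep(0)$. Conversely, if $f_\hom(j) = 0$, choose a minimiser $J \in \Rep(j)$ with $F(J) = 0$; since $\alpha_{xy} > 0$ for all $(x,y) \in \EQ$, this forces $J \equiv 0$ on $\EQ$ and hence, by $\Z^d$-periodicity, on all of $\cE$. Then $j = \Eff(J) = 0$.

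I do not anticipate any real obstacle here: the argument is essentially structural and follows from the linearity of the representative constraints together with the fact that $F$ itself is a norm. The only mild point worth stating carefully is the equivalence of $\inf$ and $\min$ in the cell formula, which is invoked to extract bona fide minimisers in the subadditivity step, but this is already recorded in \cite[Lemma 4.7]{Gladbach-Kopfer-Maas-Portinale:2023}.
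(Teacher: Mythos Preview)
Your proof is correct and structurally matches the paper's: homogeneity and subadditivity are handled identically via the linearity of the constraints and the norm properties of $F$. The one minor difference is in the definiteness step, where the paper establishes the quantitative bound
\[
\|j\| \le \frac{1}{2}\left(\max_{(x,y)\in\EQ}\frac{\|y_\sz-x_\sz\|}{\alpha_{xy}}\right) F(J)
\]
for every $J\in\Rep(j)$ directly from the formula for $\Eff$, rather than extracting a minimiser; your route via a minimiser is equally valid and arguably cleaner, while the paper's gives a slightly stronger quantitative statement and does not rely on attainment of the infimum. One small omission: the paper separately records finiteness of $f_\hom$ (nonemptiness of $\Rep(j)$, \cite[Lemma~4.5]{Gladbach-Kopfer-Maas-Portinale:2023}), which you are using implicitly when invoking Lemma~4.7.
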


\begin{proof}
	Finiteness follows from the nonemptiness of the set of representatives proved in~\cite[Lemma 4.5]{Gladbach-Kopfer-Maas-Portinale:2023}.
	
	To prove positiveness, take any~$j \in \R^d$ and~$J \in \Rep(j)$. For every norm~$\norm{\cdot}$, we have
	\begin{equation} \label{eq:p:norm:1}
		\norm{j}
		=
		\norm{\Eff(J)}
		\le
		\frac{1}{2}\sum_{(x,y) \in \EQ} \abs{J(x,y)}\norm{y_\sz - x_\sz}
		\le
		\frac{F(J)}{2} \max_{(x,y) \in \EQ} \frac{\norm{y_\sz - x_\sz}}{ \alpha_{xy}} \, .
	\end{equation}
	The constant that multiplies~$F(J)$ at the right-hand side is finite because every~$\alpha_{xy}$ is strictly positive and the graph~$(\cX,\cE)$ is locally finite.
	
	Absolute homogeneity and the triangle inequality follow from the absolute homogeneity and subadditivity of~$F$, and the affinity of the constraints. 
\end{proof}

Hence,~$\bMA_\hom$ is \emph{always} (i.e.~for any choice of~$(\alpha_{xy})_{x,y}$ and of the graph~$(\cX,\cE)$) the~$W_1$-distance w.r.t.~some norm. However, the norm~$f_\hom$ can equal the~$2$-norm~$\abs{\cdot}_2$ only in dimension~$d=1$. In fact, the unit ball for~$f_\hom$ has to be a polytope, namely the associated sphere is contained in finitely many hyperplanes. These types of norms are also known as \textit{crystalline norms}. 

\begin{proposition} \label{p:cryst}
	The unit ball associated to the norm $f_\hom$, namely 
	\[
	B
	\coloneqq
	\left\{ j \in \R^d \suchthat f_\hom(j) \le 1 \right\} \, , 
	\]
	is the convex hull of finitely many points. In particular, the associated unit sphere is contained in finitely many hyperplanes, i.e.,~$f_\hom$ is a crystalline norm.
\end{proposition}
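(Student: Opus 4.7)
The plan is to exhibit $f_\hom$ as the value function of a parametric linear program with right-hand-side parameter~$j$. Classical parametric LP theory then guarantees that $f_\hom$ is piecewise linear and convex, and combined with its positive homogeneity (being a norm by Proposition~\ref{p:norm}), this will force the unit ball $B$ to be a polytope.

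First I would pass to a finite-dimensional reformulation. A $\Z^d$-periodic antisymmetric function on $\cE$ is determined by its values on a finite fundamental domain of edges, so admissible fluxes live in a finite-dimensional vector space~$W$. The constraints $\dive J \equiv 0$ on each vertex of $\cX^Q$ and $\Eff(J) = j$ are linear in $J$, and the parameter $j$ appears only in the right-hand side of the latter. Linearising the cost $F(J) = \sum \alpha_{xy}|J(x,y)|$ via auxiliary variables $t_{xy} \ge 0$ with $\pm J(x,y) \le t_{xy}$ recasts
\[
  f_\hom(j) = \min \Big\{ \sum_{(x,y)\in \EQ} \alpha_{xy} t_{xy} \suchthat J \in \Rep(j),\ \pm J(x,y) \le t_{xy}\ \forall (x,y) \Big\}
\]
as a linear program in $(J,t)$, with $j$ entering purely on the RHS of its equality constraints.

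Classical parametric LP theory (equivalently: LP duality, noting that the dual feasible polyhedron is independent of~$j$ and the dual objective is linear in~$j$) then guarantees that $j \mapsto f_\hom(j)$ is piecewise linear and convex on the set where it is finite, which is all of $\R^d$ by Proposition~\ref{p:norm}. Hence there exist finitely many affine functionals $\ell_k(j) = \langle v_k, j\rangle + c_k$ such that $f_\hom(j) = \max_k \ell_k(j)$. Positive $1$-homogeneity of $f_\hom$ allows one to let $t \to \infty$ in the identity
\[
  f_\hom(j) = \frac{f_\hom(tj)}{t} = \max_k \Big( \langle v_k, j\rangle + \frac{c_k}{t} \Big)
\]
and obtain the refined representation $f_\hom(j) = \max_{k=1,\dots,K} \langle v_k, j\rangle$.

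This representation immediately yields $B = \bigcap_{k=1}^K \{j \in \R^d : \langle v_k, j\rangle \le 1\}$, a finite intersection of closed half-spaces; since $f_\hom$ is a norm, $B$ is bounded, so by the Minkowski--Weyl theorem it is a polytope, namely the convex hull of its finitely many extreme points. In particular the unit sphere $\{f_\hom = 1\}$ lies in the union $\bigcup_k \{\langle v_k, \cdot\rangle = 1\}$ of finitely many hyperplanes, establishing the crystalline character of $f_\hom$. The main subtle step is the piecewise-linearity of the parametric LP value function: this is classical and can be justified either by direct combinatorial analysis of basic feasible solutions or by LP duality, observing that the finiteness of $f_\hom$ everywhere, together with the $j$-independence of the dual polyhedron, forces the dual supremum to be attained at one of its finitely many extreme points for every~$j$.
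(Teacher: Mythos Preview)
Your argument is correct, but it takes a genuinely different route from the paper's proof. The paper works directly in the space of $\Z^d$-periodic divergence-free fluxes: it observes that the sublevel set $X_1 = \{J : F(J)\le 1\}$ is a bounded finite intersection of half-spaces (since $F$ is a weighted $\ell^1$-norm), hence the convex hull of a finite set $A$; then, since $f_\hom$ is a minimum, $B = \Eff(X_1) = \Eff(\conv A) = \conv(\Eff A)$ by linearity of $\Eff$. This gives the vertex description of $B$ in one line, using nothing beyond ``a linear image of a polytope is a polytope.'' Your approach instead casts the cell problem as a parametric linear program and invokes the piecewise linearity of the LP value function (via duality or the polyhedrality of the epigraph), then uses homogeneity and Minkowski--Weyl to conclude. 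This is heavier machinery but yields, as a byproduct, the dual description $f_\hom(j)=\max_k\langle v_k,j\rangle$ and hence the facet representation of $B$. One minor point: your duality justification for ``finitely many pieces'' is slightly delicate if the dual polyhedron contains a line; the cleanest route is to note that the epigraph of $f_\hom$ is the projection of a polyhedron in $(J,t,j,s)$-space, hence itself a polyhedron.
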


\begin{proof}
	Let~$X$ be the vector space of all~$\Z^d$-periodic functions~$J \in \R^\cE_a$ such that~$\dive J \equiv 0$. The sublevel set 
	\[ 
	X_1 
	\coloneqq 
	\left\{ J \in X \suchthat F(J) \le 1 \right\}
	\]
	is clearly compact (due to the strict positivity of $(\alpha_{xy})_{x,y}$) and can be written as finite intersection of half-spaces, namely
	\[ 
	X_1
	=
	\bigcap_{r \in \{-1,1\}^{\EQ} }
	\left\{ J \in X 
	\suchthat
	\sum_{(x,y) \in \EQ} \alpha_{xy} r_{xy} J(x,y) \le 1 \right\}.
	\]
	Thus,~$X_1$ is the convex hull of some finite set of points~$A$, that is,~$ 	X_1	= \conv(A)$. Since~$f_\hom$ is defined as a minimum, we have
	\[
	B
	=
	\left\{ j \in \R^d \suchthat \exists J \in \Rep(j), \, F(J) \le 1 \right\}
	=
	\Eff(X_1) 
	=
	\Eff(\conv(A)) 
	=
	\conv(\Eff(A))
	\, , 
	\]
	where the last equality is due to the linearity of~$\Eff$.
\end{proof}

\subsection{Embedded graphs}

To visualise some examples, we shall now focus on the case where~$(\cX, \cE)$ is embedded, in the sense that~$V$ is a subset of~$[0,1)^d$ and we use the identification~$(z,v) \equiv z+v$ (also see~\cite[Remark 2.2]{Gladbach-Kopfer-Maas-Portinale:2023}). It has been proved in~\cite[Proposition 9.1]{Gladbach-Kopfer-Maas-Portinale:2023} that, for embedded graphs, the identity
\begin{equation} \label{eq:effgeo}
	\Eff(J)
	=
	\frac{1}{2} \sum_{ (x, y) \in \EQ } J(x,y)(y-x)
\end{equation}
holds for every~$\Z^d$-periodic and divergence-free vector field~$J \in \R^\cE_a$. In what follows, we also make the choice
\[ 
\alpha_{xy} \coloneqq \frac{1}{2}\abs{x-y}_2 \, , \qquad (x,y) \in \EQ \, .
\]

\subsubsection{One-dimensional case with nearest-neighbor interaction} Assume~$d=1$, let~$x_1 < x_2 < \cdots < x_k$ be an enumeration of~$V$, and set
\[
\cE
\coloneqq 
\{ (x,y) \in \X \times \X \text{ s.t.~there is no $z \in \X$ strictly between $x$ and $y$} \}.
\]
In other words, denoting~$x_0 = x_k - 1$ and~$x_{k+1} = x_1 + 1$,
\[
\cE
=
\bigcup_{z \in \Z} \bigcup_{i=1}^k \left\{ (x_i,x_{i+1}) \right\} \cup \left\{ (x_i,x_{i-1}) \right\} .
\]
By rewriting~\eqref{eq:p:norm:1} using~\eqref{eq:effgeo}, and by the definition of~$f_\hom$, we find
\[
\abs{j} \le f_\hom(j) \, , \qquad j \in \R^d \, .
\]
On the other hand, given~$j \in \R^d$, choose
\[
J(x,y)
\coloneqq
j \sgn(y-x) \, ,
\qquad
(x,y) \in \cE \, .
\]
This vector field is in~$\Rep(j)$, because
\[
\dive J(x_i)
=
J(x_i,x_{i+1}) + J(x_i,x_{i-1})
=
j - j
=
0 \, ,
\]
and
\begin{align*}
	\Eff(J)
	&=
	\frac{1}{2} \sum_{i=1}^k \bigl( J(x_i,x_{i+1})(x_{i+1}-x_i) + J(x_i,x_{i-1})(x_{i-1}-x_i) \bigr) \\
	&=
	\frac{j}{2} \sum_{i=1}^k \bigl( \abs{x_{i+1} - x_i} + \abs{x_i - x_{i-1}} \bigr) \\
	&=
	\frac{j}{2} \bigl( x_{k+1} - x_1 + x_k - x_0 \bigr) \\
	&=
	j.
\end{align*}
A similar computation shows that~$F(J) = \abs{j}$, from which~$f_\hom(j) = \abs{j}$.

\subsubsection{Cubic partition} Consider the case where~$\X = \Z^d$ and
\[
\cE
\coloneqq
\left\{ (x,y) \in \Z^d \times \Z^d \suchthat \abs{x-y}_\infty = 1 \right\}.
\]
It is a result of~\cite[Section 9.2]{Gladbach-Kopfer-Maas-Portinale:2023} that
\[
f_\hom(j)
=
\abs{j}_1 \, ,
\qquad
j \in \R^d \, .
\]
Notice that, in this case, the~$2$-norm is evaluated only at vectors on the coordinate axes. Therefore, the same result holds when~$\alpha_{xy} = \frac{1}{2}\abs{x-y}_p$, for any~$p$.

\subsubsection{Graphs in~$\R^2$} A few other examples in dimension~$d=2$ are shown in Figure~\ref{fig:tilings}: for each one, we display the graph and the unit ball in the corresponding norm~$f_\hom$.
To construct algorithmically the unit balls, we solve the variational problem~\eqref{eq:fhomj} for every~$j$ on a discretisation of the circle~$\mathbb S^1$. In turn, this is achieved with the help of the Python library CVXPY \cite{diamond2016cvxpy,agrawal2018rewriting}. For visualisation, we make use of the library matplotlib \cite{Hunter:2007}.

\begin{figure}
	\centering
	\subfloat[][]
	{\includegraphics[width=.43\textwidth, trim={13mm 17mm 20mm 24mm},clip]{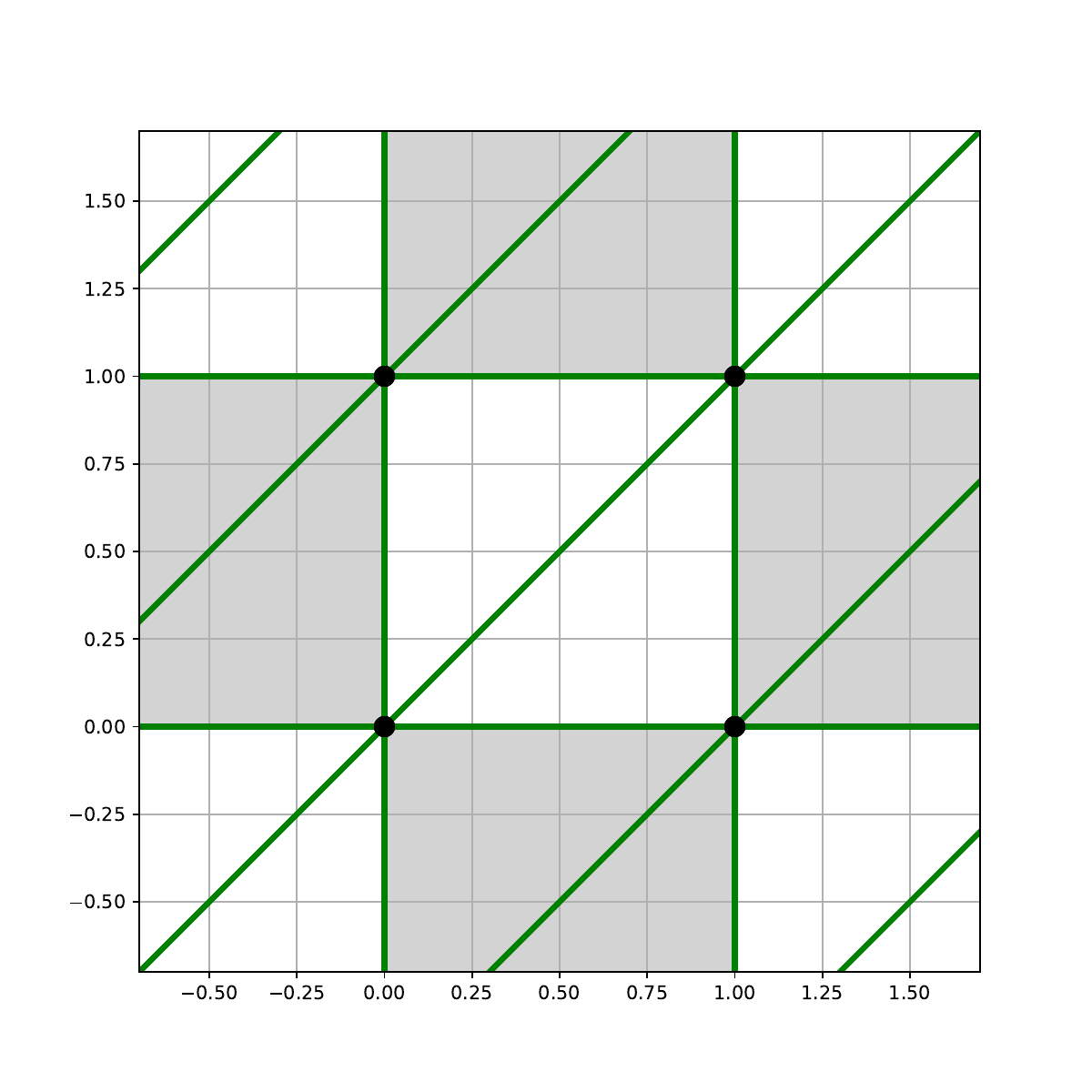}
		\hspace{25pt}
		\includegraphics[width=.43\textwidth, trim={13mm 17mm 20mm 24mm},clip]{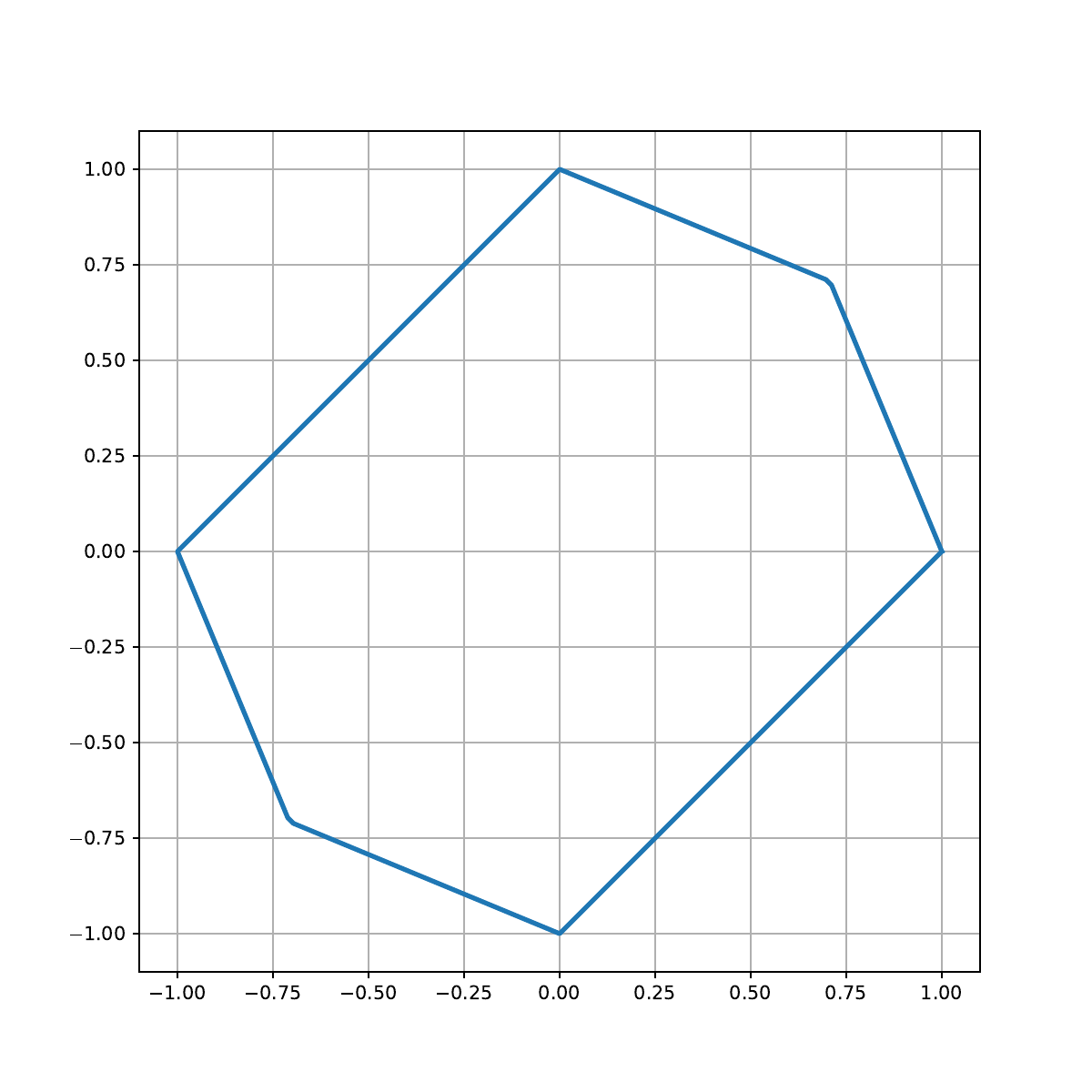}} \\
	\subfloat[][]
	{\includegraphics[width=.43\textwidth, trim={13mm 17mm 20mm 24mm},clip]{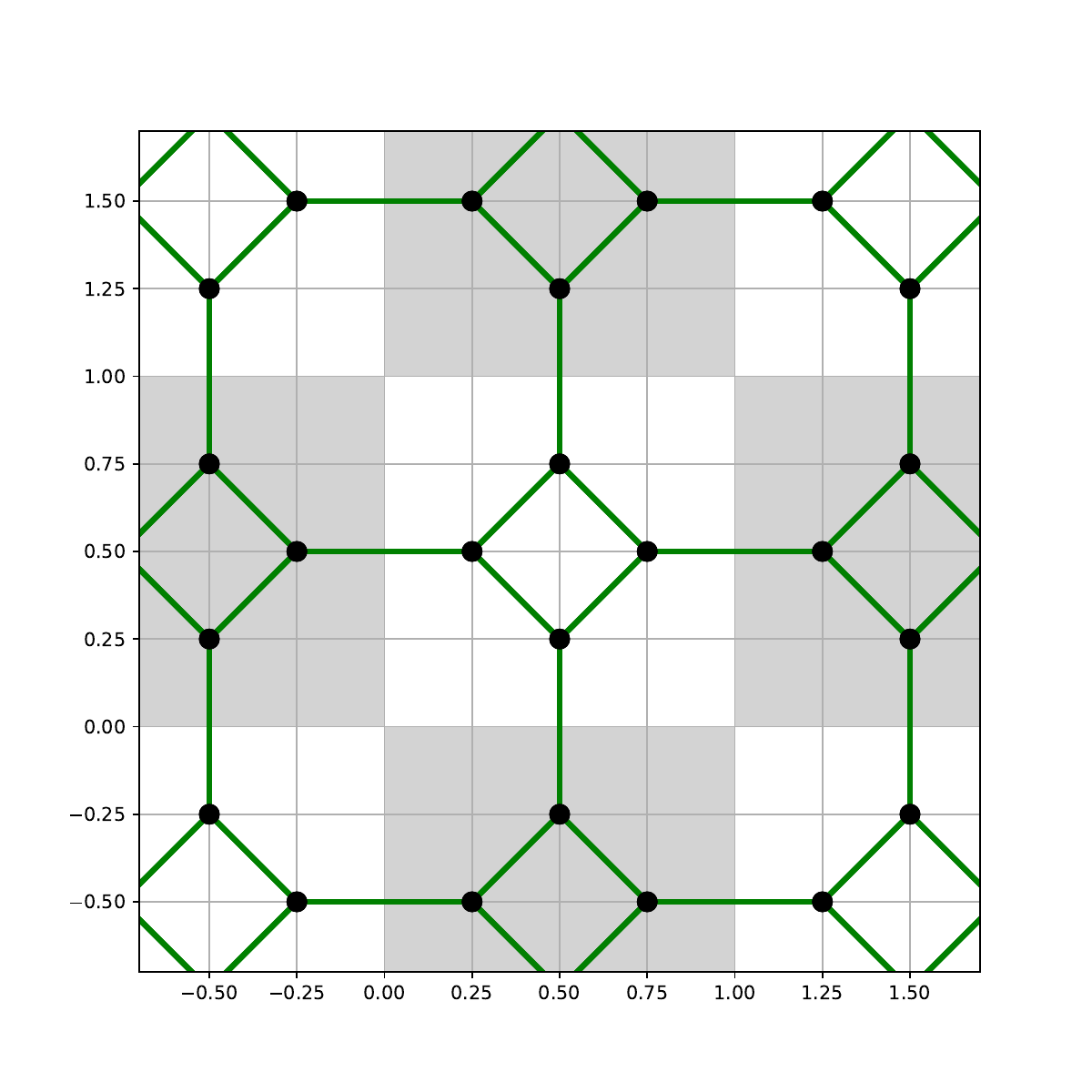}
		\hspace{25pt}
		\includegraphics[width=.43\textwidth, trim={13mm 17mm 20mm 24mm},clip]{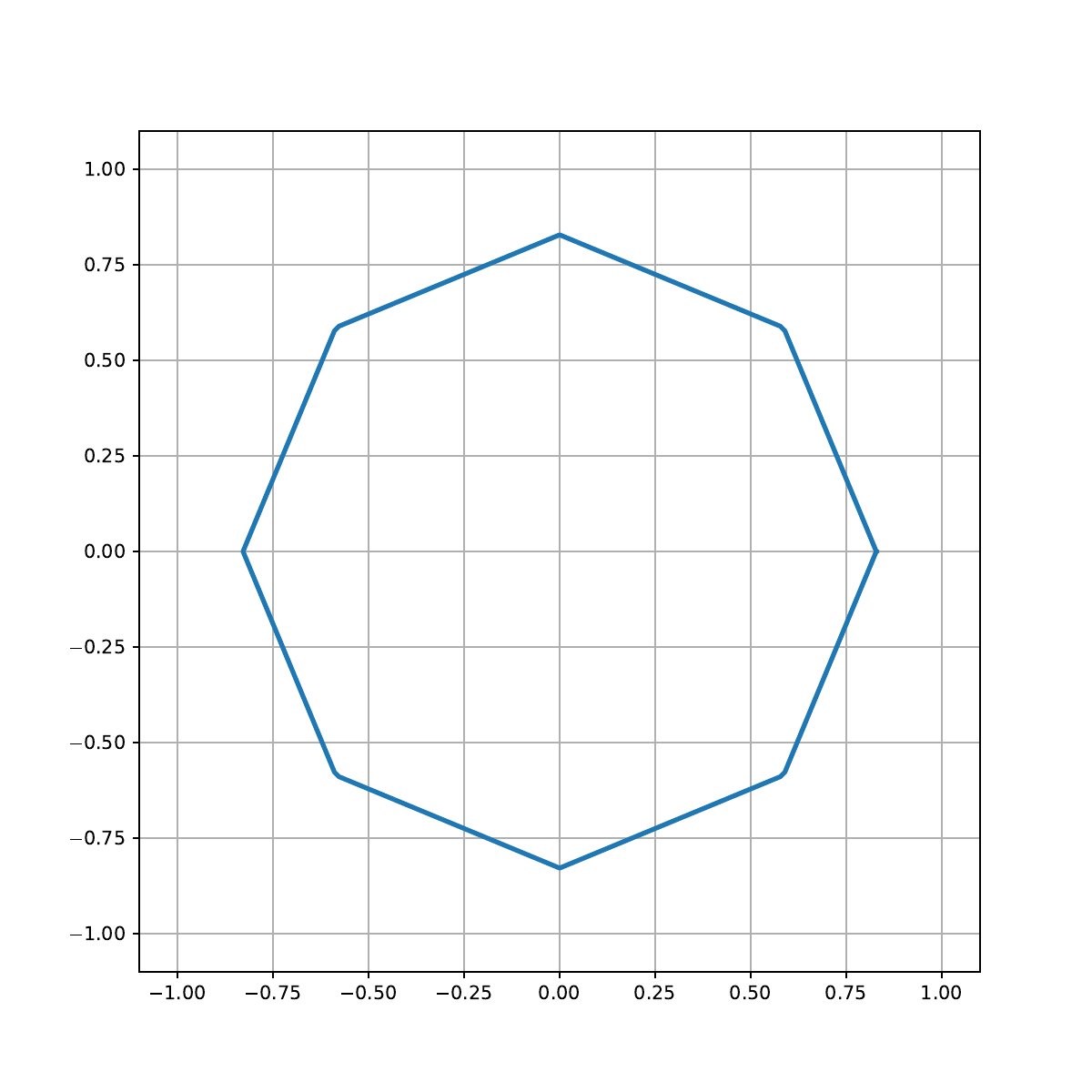}} \\
	\subfloat[][]
	{\includegraphics[width=.43\textwidth, trim={13mm 17mm 20mm 24mm},clip]{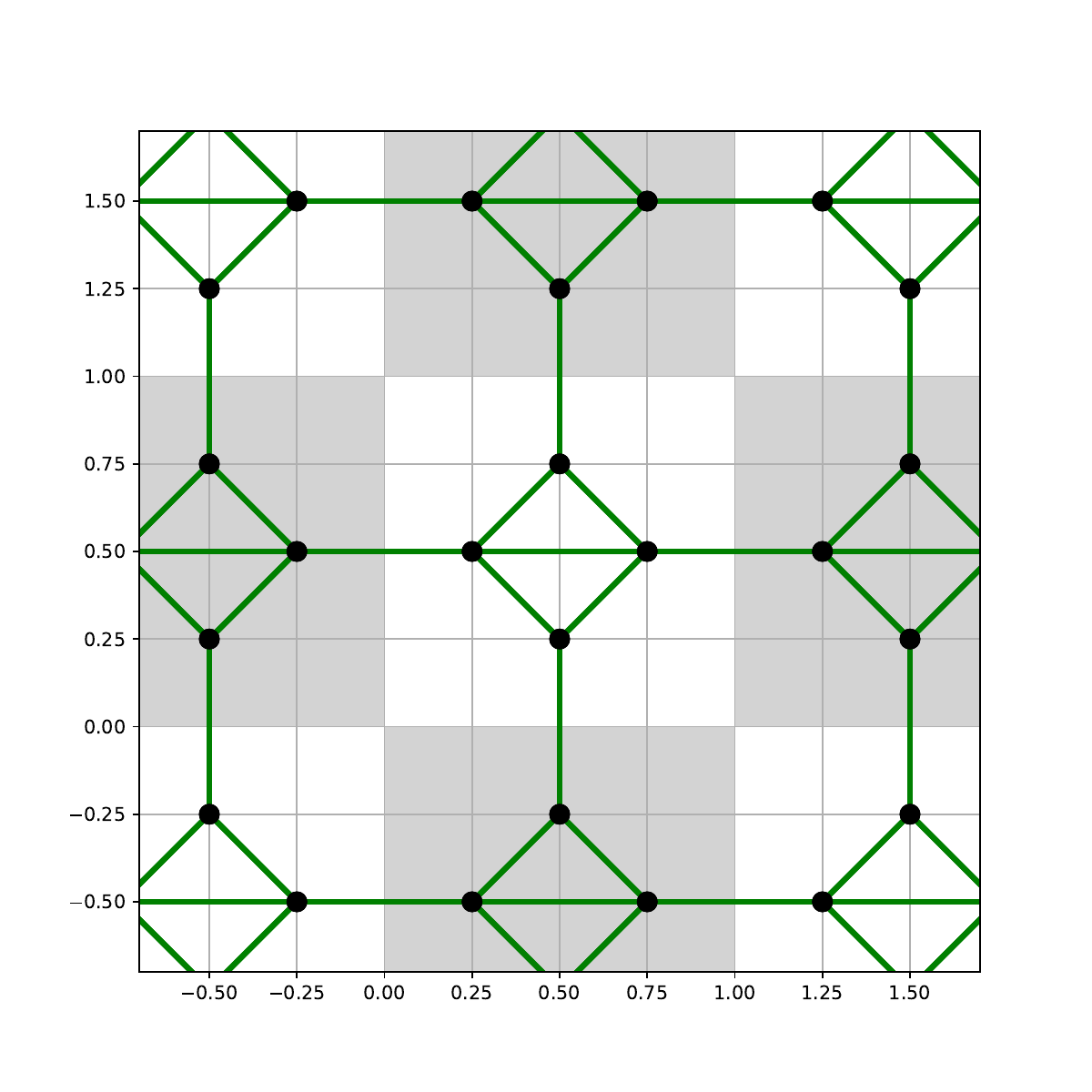}
		\hspace{25pt}
		\includegraphics[width=.43\textwidth, trim={13mm 17mm 20mm 24mm},clip]{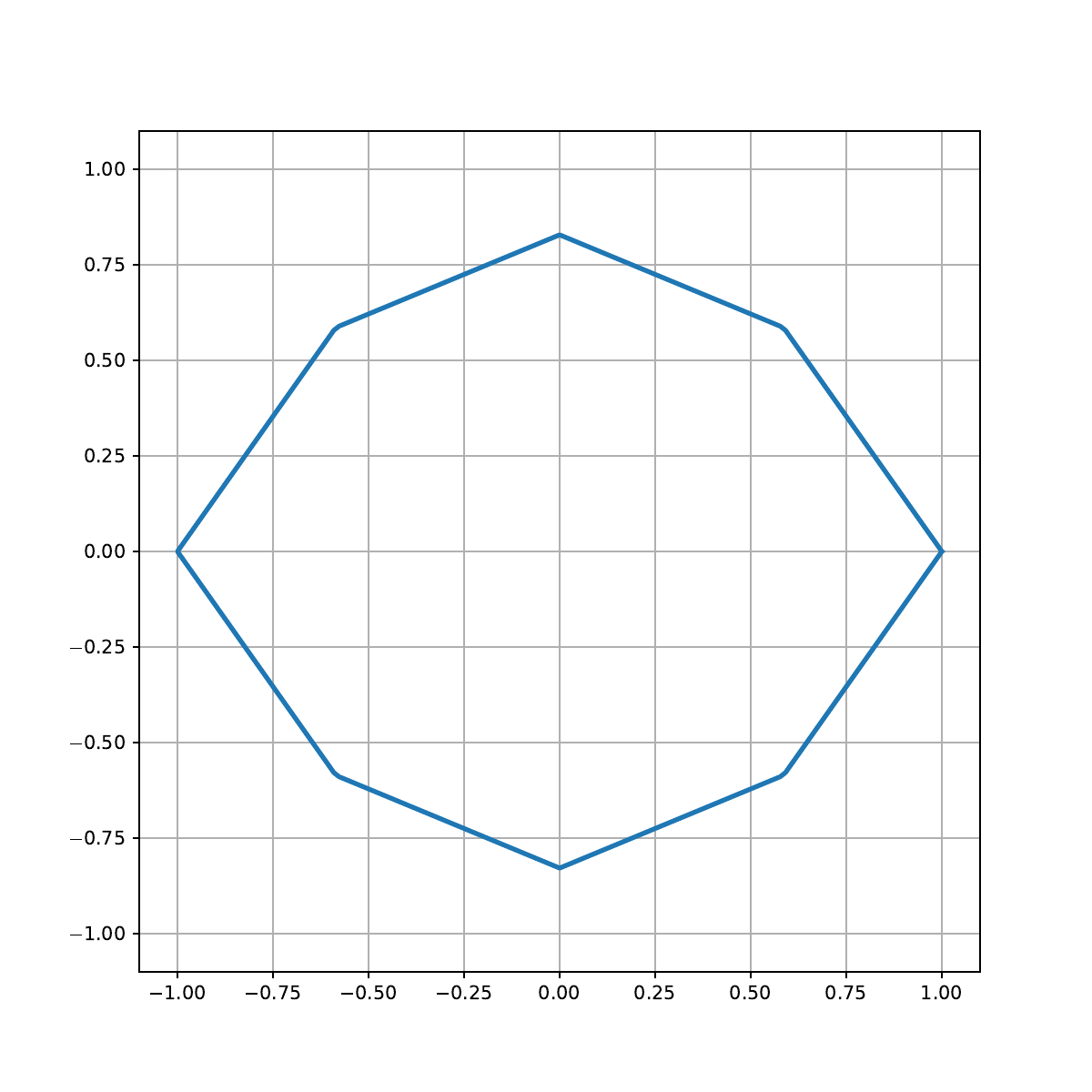}} \\
	\caption{Examples of graphs in~$\R^2$ and corresponding unit balls for~$f_\hom$}
	\label{fig:tilings}
\end{figure}

\clearpage

\newcommand{\etalchar}[1]{$^{#1}$}

\end{document}